\newcommand{\norm}[1]{\left\lVert#1\right\rVert}
\newcommand{\abs}[1]{\left\lvert#1\right\rvert}
\newcommand{\inn}[2]{\left\langle#1\mid#2\right\rangle}
\newcommand{\set}[2]{\left\lbrace #1 \middle| #2 \right\rbrace}
\DeclareMathAlphabet{\mathbbold}{U}{bbold}{m}{n}
\definecolor{mygreen}{RGB}{28,172,0} 
\definecolor{mylilas}{RGB}{170,55,241}
\tikzset{
  shaded/.style = {fill=red!10!blue!20!gray!30!white},
  unshaded/.style = {fill=white},
  shadedw/.style = {fill=white},
  shadedb/.style = {fill=blue!120!gray!30!white},
  shadedr/.style = {fill=red!120!gray!30!white},
  shadedg/.style = {fill=green!120!gray!30!white},
  shadedy/.style = {fill=yellow!120!gray!30!white},
  Tcirc/.style = {circle, draw, thick, fill=white, opaque},
  Tellip/.style = {ellipse, draw, thick, fill=white, opaque},
  Tbox/.style = {rounded corners,rectangle, draw, thick, fill=white, opaque},
  align.7/.style = {scale=.7, baseline},
  align1/.style = {scale=1.05, baseline},
  align1.5/.style = {scale=1.5,baseline},
  every picture/.style=semithick
}
\tikzset{
  knot diagram/every strand/.append style={black, thick},
  stock/.style={consider self intersections=true, end tolerance=1pt, clip width=5pt, clip radius=5pt},
  stockthick/.style={consider self intersections=true, end tolerance=1pt, clip width=3pt, clip radius=20pt},
  shaded/.style = {fill=red!10!blue!20!gray!30!white},
  unshaded/.style = {fill=white},
}
\pgfplotsset{compat=1.17}
\newtheorem{thm}{Theorem}[section]
\newtheorem{prop}[thm]{Proposition}
\newtheorem{lem}[thm]{Lemma}
\newtheorem{conj}[thm]{Conjecture}
\newtheorem{defn}[thm]{Definition}
\newtheorem{exmp}[thm]{Example}
\newtheorem{rem}[thm]{Remark}
\title{The spectrum of spin model angle operators}
\author{Michael Montgomery}
\begin{document}
\maketitle
\begin{abstract}
	Complex Hadamard matrices are biunitaries for spin model commuting squares. The corresponding subfactor standard invariant can be identified with the $1$-eigenspace of the angle operator defined by Jones. We identify the angle operator as an element of the symmetric enveloping algebra and compute its trace. We then show the angle operator spectrum coincides with the principal graph spectrum up to a constant iff the subfactor is amenable. We use this to show Paley type $II$ Hadamard matrices and Petrescu's $7 \times 7$ family of complex Hadamard matrices yield infinite depth subfactors.
\end{abstract}

\vspace{7mm}

Jones introduced the basic construction of a finite index subfactor, $N\subset M$, in \cite{Jon83} . Iterating his construction yields the Jones tower of $II_1$ factors $$M_{-1} = N \subset M_0=M \subset M_1 \subset M_2 \subset \cdots .$$ Taking relative commutants of these factors we may build the standard invariant of the subfactor which consists of finite dimensional $C^*$-algebras, $\set{M_i' \cap M_n}{i=-1,0, \text{ } n=-1,0,1,...}$, inclusions $M_0' \cap M_n \subset M_{-1}' \cap M_n$, $M_i' \cap M_n \subset M_i' \cap M_{n+1}$, and Jones projections $\set{e_n}{ n \geq 1}$. Classifying standard invariants and constructing exotic examples has been a multi-decade project that has contributed to low-dimensional topology and many areas of mathematical physics. A summary of this can be found in \cite{JMS14}, and for an introduction to subfactors see \cite{JS97}. Jones axiomatized the standard invariant of extremal subfactors as subfactor planar algebras in \cite{Jon99}. Weaker invariants of $N \subset M$ can be constructed from the Bratteli diagrams of the standard invariant, called the principal graphs of $N \subset M$.

In this paper we focus on subfactors generated from complex Hadamard matrices called spin model subfactors. The principal graphs of subfactors for twisted tensor products of Fourier matrices have been identified by Burstein as Bisch-Haagerup subfactors, but very little is known about the standard invariants of spin model subfactors outside of this family (see \cite{Bur15} and \cite{BH96}). In \cite{Jon99}, Jones defined an angle operator, $\Theta_u$, in the sense of \cite{SW94}, whose $1$-eigenspace is the standard invariant of the subfactor. We identify this angle operator as an element of $C^*(M,e_N,JMJ)$, the $C^*$-algebra generated by $M$, $e_N$, and $JMJ$ on $L^2(M)$. Popa showed in \cite{Pop99} that $C^*(M,e_N,JMJ)$ admits a tracial state, $\tau$, which is faithful iff the subfactor is amenable. We then compute $\tau(\Theta_u^n)$ in terms of the standard invariant and prove a correspondence between the principal graph spectrum and angle operator spectrum. Since the angle operator has finite dimensional representations we can compute elements of its spectrum.

Finally, we find non-algebraic integers in the spectra of angle operators for Petrescu's continuous family of $7\times 7$ complex Hadamard matrices \cite{Pet97} and Paley type $II$ Hadamard matrices \cite{Pal33}. Since the spectrum of finite graphs only contain algebraic integers, these subfactors are infinite depth. Jones asked in \cite{Jon99} if any complex Hadamard matrix yields an $A_\infty$ principal graph. This question remains unresolved, but we conjecture that Paley type $II$ Hadamard matrices yield subfactors with $A_\infty$ principal graphs.

\section{Preliminaries}

We will work in a planar algebra called $P^{Spin}$ defined in \cite{Jon99} and \cite{Jon19}. We first define the spin planar algebra.

\begin{defn}\normalfont (\cite{Jon99},\cite{Jon19})
A \textit{shaded planar tangle} $T$ consists of the following data:
		\begin{enumerate}[(i)]
			\item A smooth disc $D^T \subset \mathbb R^2$ called the \textit{output disc}.
			\item A finite collection of disjoint smooth discs $\mathfrak D_T$ that lie inside $Int(D^T)$ called the \textit{input discs}.
			\item A finite collection of disjoint smooth curves $\mathfrak S_T$ that lie inside $D^T - \bigcup_{D \in \mathfrak D_T} Int(D)$ such that its boundary points belong to the input discs or the boundary of the output disc and all curves meet discs transversely if at all. Elements of $\mathfrak S_T$ are called \textit{strings} of $T$.
		\item An assignment of shaded or unshaded to the connected regions of $D^T -\left(\displaystyle\bigcup_{\mathfrak D_T} D\right)\bigcup \left(\displaystyle\bigcup_{\mathfrak S_T} s \right)$ such that every string belongs to the boundary of a shaded region and an unshaded region.
		\item The boundary of each disc is broken into a finite number of components called the \textit{boundary points} of $D$, the points in $(\bigcup_{s \in \mathfrak S_T} s)\cap \partial D$, and the \textit{intervals} of $D$, the connected components of $\partial D - \bigcup_{s \in \mathfrak S_T} s$. Each disc has a single marked interval that we will denote with a $\$$. We will assign to each disc $D$ \textit{boundary data} $\partial D =(n_D,\pm)$ where $n_D := \frac{1}{2}\#(\text{boundary points of D})$ and $\pm$ indicates whether the marked interval is shaded.
		\end{enumerate}
	\end{defn}

	\begin{defn}\normalfont (\cite{Jon99},\cite{Jon19})
		A $\textit{unital shaded planar algebra}$ $P$ is a family of vector spaces $P_{n,\pm}, n \in \mathbb N\cup \{0\}$ with multilinear maps
		$$Z_T \colon \bigtimes_{D \in \mathfrak D_T} P_{\partial D} \to P_{\partial D^T}$$
		for every planar tangle $T$ such that:
		\begin{enumerate}[(i)]
			\item If $\theta$ is an orientation preserving diffeomorphism of $\mathbb R^2$, then
			$Z_{\theta(T)}(f)=Z_T(f \circ \theta)$.
			\item $Z_{T \circ S} = Z_T \circ Z_S$ where 
			$Z_T \circ Z_S (f) = Z_T(\tilde{f})$ and $\tilde{f}=
			\left\lbrace \begin{array}{cc}
				f(D) & \text{if } D \neq D^S\\
				Z_S(f\mid_{\mathfrak D_S}) & \text{if } D=D^S\\
			\end{array}\right.$.
		\end{enumerate}
		Elements of $\displaystyle\bigtimes_{D \in \mathfrak D_T} P_{\partial D}$ are called \textit{labellings} of the tangle $T$.
	\end{defn}

\begin{defn}\normalfont
In \cite{Jon19} Jones defines the following planar algebra called the \textit{spin planar algebra}. Fix $Q \in \mathbb N$. Let $P_{0,+}^{Spin} = \mathbb C$, $P_{0,-}^{Spin} =\mathbb C^Q$ and $P_{n,\pm}^{Spin} = (\mathbb C^Q)^{\otimes n}$ where elements in $P_{n, \pm}^{Spin}$ correspond to a disc with $2n$ boundary points and $n$ shaded intervals. Fix a basis $B=\{\hat{1},...,\hat{Q}\}$ on $\mathbb C^Q$. Vectors $v \in (\mathbb C^Q)^{\otimes n}$ are a (unique) linear combination of simple tensors $B^{\otimes n} = \set{\otimes_{i=1}^n \hat{s}_i}{\hat{s}_i \in B}$, $v=\displaystyle\sum_{\hat{b} \in B^{\otimes n}} v_b \hat{b}$, $v_b \in \mathbb C$. Then the following rules will equip these vector spaces with a unital shaded $*$-planar algebra structure.
\begin{enumerate}[(i)]
\item A state on a shaded planar tangle, $T$, is a map $\sigma \colon \{\text{connected shaded regions of }T\} \to \{\hat{1},...,\hat{Q}\}$.
\item For each disc $D \in \mathfrak D_T$, a state $\sigma$ induces the following labelling of shaded intervals of $D$. Count the shaded intervals of $D$ in a counter clockwise direction starting after the marked interval. Set $\sigma_D =\otimes_{i=1}^{n_D}\hat{s}_i$ where the $i^{th}$ shaded interval belongs to the boundary of a region labelled by $\hat{s}_i \in B$. If $D$ has no shaded intervals then $\sigma_D = 1$.
\item Given a shaded planar tangle $T$ and a region $r$ in $T$, define $Rot(r)$ as follows: Remove input disks with zero boundary points. Then give $r$ a counter-clockwise orientation inducing an orientation on the boundary of $r$ which is a union of piecewise smooth curves. Define $Rot(r)$ as the rotation number of the oriented boundary of $r$.
\item Given a state $\sigma$ on a shaded planar tangle $T$, define 
$$Rot(\sigma)= \prod_{\genfrac{}{}{0pt}{3}{\text{shaded regions}}{\text{$r$ of $T$}}} \left(\dfrac{1}{\sqrt{Q}}\right)^{Rot(r)}.$$
\item Define $(\otimes_{i=1}^n \hat{s}_i)^*=\otimes_{i=1}^n \hat{s}_{n-i+1}$ for $\otimes_{i=1}^n \hat{s}_i \in P^{Spin}_{n,+}$ and $(\otimes_{i=1}^n \hat{s}_i)^*=\otimes_{i=1}^{n-1} \hat{s}_{n-i} \otimes \hat{s}_n$ for\\
$\otimes_{i=1}^n \hat{s}_i \in P^{Spin}_{n,-}$ then extend $*$ to $P^{Spin}_{n,\pm}$ by conjugate linearity.

\item Finally, we define the action of a tangle $T$ with a labelling $f \in \displaystyle\bigtimes_{D \in \mathfrak D_T} P^{Spin}_{\partial D}$ by 
$$Z_T(f) = \sum_\sigma \sqrt{Q}^{n_T} Rot(\sigma) \prod_{D \in \mathfrak D_T} f(D)_{\sigma_D}  \sigma_{D^T},$$
where $n_T$ is the number of shaded intervals in the output disk, $f(D)=\displaystyle\sum_{\hat{b} \in B^{\otimes n}} f(D)_b \hat{b}$ for $n=n_D$, and an empty product is interpreted as $1$.
\end{enumerate}
\end{defn}

For example, fix $Q=5$ and let $x=\hat{1} \otimes \hat{2}$, $y=\hat{2} \otimes \hat{3}$. Then we can evaluate the tangle
$$\begin{tikzpicture}[align1]
\clip[rounded corners] (0,-.5) rectangle (2,.6);
\draw[shaded] (0,-.1) rectangle (2,.1);
\draw[shaded] (1,-.5) circle (.2cm);
\node at (.6,0)[Tbox, minimum width=.5cm, minimum height=.5cm]{\scriptsize $x$};
\node at (1.4,0)[Tbox, minimum width=.5cm, minimum height=.5cm]{\scriptsize $y$};
\node at (1,0){\scriptsize $b$};
\node at (.2,0){\scriptsize $a$};
\node at (1.8,0){\scriptsize $c$};
\node at (1,-.4){\scriptsize $d$};
\node at (.1,.45) {\scriptsize $\$$};
\node at (.6,.35) {\scriptsize $\$$};
\node at (1.4,.35) {\scriptsize $\$$};
\draw[very thick, rounded corners] (0,-.5) rectangle (2,.6);
\end{tikzpicture}=\sum_{a,b,c,d} \sqrt{5}^3 \left(\frac{1}{\sqrt{5}}\right)^4 x_{\hat{a}\otimes\hat{b}} y_{\hat{b}\otimes \hat{c}} \hat{a}\otimes\hat{d}\otimes \hat{c}=\frac{1}{\sqrt{5}}\sum_{d=1}^5 \hat{1}\otimes\hat{d}\otimes\hat{3}.$$

Jones defined another planar algebra, $P^{spin}$ also called the spin planar algebra, where the action by tangles is nearly identical. It can be obtained by removing $\sqrt{Q}^{n_T} Rot(\sigma)$ from the definition above. We work with $P^{Spin}$ in this paper since both of its loop parameters $\delta_+$ and $\delta_-$ are equal and the type $II$ Reidemeister moves we will perform later have a cleaner presentation. 

\begin{prop}(\cite{Jon99},\cite{Jon19})
$P^{Spin}$ is a shaded $C^*$-planar algebra i.e. each $P^{Spin}_{n,\pm}$ becomes a $C^*$-algebra with the multiplication tangle
$$xy=\begin{tikzpicture}[align1]
\clip[rounded corners] (0,-.5) rectangle (2,.6);
\draw[line width=1mm] (0,0)--(2,0);
\node at (.6,0)[Tbox, minimum width=.5cm, minimum height=.5cm]{\scriptsize $x$};
\node at (1.4,0)[Tbox, minimum width=.5cm, minimum height=.5cm]{\scriptsize $y$};
\node at (1,0)[Tcirc, inner sep=0]{\scriptsize $n$};
\node at (.2,0)[Tcirc, inner sep=0]{\scriptsize $n$};
\node at (1.8,0)[Tcirc, inner sep=0]{\scriptsize $n$};
\node at (.1,.45) {\scriptsize $\$$};
\node at (.6,.35) {\scriptsize $\$$};
\node at (1.4,.35) {\scriptsize $\$$};
\draw[very thick, rounded corners] (0,-.5) rectangle (2,.6);
\end{tikzpicture}$$
and the $*$-operation. Furthermore, $P^{Spin}$ has loop parameters
			$$\delta_+ =
			\begin{tikzpicture}[align1,scale=.5]
				\clip (0,0) circle (.75cm);
				\begin{scope}[shift=(-90:4mm)]
					\draw[shaded] (0,0) to [out=180, in=160] (0,6mm) to [out=-20 in=160] (1.8mm,9mm) to [out=-20, in=0] (0,0);
				\end{scope}
				\node at (-5.5mm,0) {\scriptsize $\$$};
				\draw[very thick] (0,0) circle (.75cm);
			\end{tikzpicture}=\sqrt{Q}\cdot\text{id}
		\quad \text{ and }\quad \delta_-=
			\begin{tikzpicture}[align1, scale=.5]
				\clip (0,0) circle (.75cm);
				\draw[shaded, very thick] (0,0) circle (.75cm);
				\begin{scope}[shift=(-90:4mm)]
					\draw[unshaded] (0,0) to [out=180, in=160] (0,6mm) to [out=-20 in=160] (1.8mm,9mm) to [out=-20, in=0] (0,0);
				\end{scope}
				\node at (-5.5mm,0) {\scriptsize $\$$};
			\end{tikzpicture}=\sqrt{Q}\cdot\text{id}.$$
\end{prop}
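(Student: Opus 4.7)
The plan is to split the proposition into two independent verifications: the loop-parameter computations $\delta_\pm = \sqrt{Q}$, which are a direct application of the state-sum formula in (vi); and the $C^*$-algebra structure on each $P^{Spin}_{n,\pm}$, which is essentially a specialization of Jones's construction of $*$-planar algebras from \cite{Jon99, Jon19}, together with the existence of a faithful positive Markov trace.

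For both loop parameters, the tangle has no input discs, so (vi) reduces to $Z_T(\emptyset) = \sum_\sigma \sqrt{Q}^{n_T} Rot(\sigma)\, \sigma_{D^T}$. For $\delta_+$, the output disc is unshaded with no boundary points, hence $n_T = 0$ and $\sigma_{D^T} = 1$. The single shaded region is a topological disc whose boundary, oriented so the region lies on its left, is a single counter-clockwise loop of rotation number $+1$; thus $Rot(\sigma) = 1/\sqrt{Q}$, and summing over the $Q$ states of that region gives $\delta_+ = Q \cdot Q^{-1/2} = \sqrt{Q}$. For $\delta_-$, the output disc is shaded with no boundary points, so its full boundary is a single shaded interval and $n_T = 1$; the unique shaded region is now an annulus whose induced oriented boundary has rotation number $1 + (-1) = 0$, so $Rot(\sigma) = 1$, and the state sum is $\sqrt{Q}\sum_{s=1}^Q \hat s$. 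An analogous computation on the empty shaded disc tangle evaluates to $\sum_{s=1}^Q \hat s$, which one verifies is the unit of $P^{Spin}_{0,-}$ by plugging it into the multiplication tangle, so $\delta_- = \sqrt{Q}$.

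For the $C^*$-algebra structure, associativity of the multiplication tangle and existence of a unit follow from the shaded planar algebra axioms (i)--(ii) of the second definition. That $*$ as defined in (v) is a conjugate-linear involution is immediate from the explicit reversal of tensor factors, and anti-multiplicativity $(xy)^* = y^* x^*$ is checked on basis vectors using the horizontal reflection symmetry of the multiplication tangle, together with the parity-sensitive reindexing in (v). The $C^*$-identity follows from positivity and faithfulness of the Markov trace $\mathrm{tr}(x) = Z_{T_{\mathrm{tr}}}(x)/\sqrt{Q}^{\,2n}$ on $P^{Spin}_{n,\pm}$: the state-sum for $\mathrm{tr}(x^* x)$ collapses to $\sum_b |x_b|^2$ on a basis expansion $x = \sum_b x_b \hat b$, exhibiting each $P^{Spin}_{n,\pm}$ as a finite-dimensional $*$-subalgebra of $B(L^2(\mathrm{tr}))$. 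The main obstacle throughout is the bookkeeping of the $\sqrt{Q}$ normalizations and rotation-number factors against the positions of marked intervals; the $P^{Spin}$ convention is designed precisely so that these balance cleanly and produce $\delta_+ = \delta_-$, which is why we use it in place of $P^{spin}$ in the sequel.
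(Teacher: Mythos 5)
The paper does not actually prove this proposition: it is quoted as background from \cite{Jon99} and \cite{Jon19}, so there is no internal argument to compare yours against. Judged on its own, your verification is correct and is the natural one. The loop-parameter computations are right, including the two points where the bookkeeping is easy to get wrong: for $\delta_+$ the single shaded region is a disc, contributing $Rot(\sigma)=Q^{-1/2}$ with $n_T=0$, giving $Q\cdot Q^{-1/2}=\sqrt{Q}$; for $\delta_-$ you correctly take $n_T=1$ (the whole boundary of the $(0,-)$ output disc is one shaded interval), note the shaded region is an annulus with rotation number $1+(-1)=0$, and identify $\sum_{s}\hat{s}$ (the value of the empty shaded tangle) as the unit of $P^{Spin}_{0,-}$ — when you check that last point via the multiplication tangle, remember the clause in the definition of $Rot(r)$ that input discs with zero boundary points are removed first, which is exactly what makes the multiplication on $P^{Spin}_{0,-}$ pointwise and the computation consistent. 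The $C^*$-algebra part is also fine in outline: associativity/unitality from the operad axioms, $*$ an anti-multiplicative conjugate-linear involution checked on simple tensors, and the $C^*$-identity from a faithful positive trace on a finite-dimensional $*$-algebra via GNS. One cosmetic slip: your normalization $tr(x)=Z_{T_{tr}}(x)/\sqrt{Q}^{\,2n}$ is not the paper's (which uses $\sqrt{Q}^{-n}$ and is the normalized Markov trace), and your claim that $tr(x^*x)$ equals $\sum_b\abs{x_b}^2$ holds only up to a positive constant; neither affects positivity or faithfulness, which is all the argument needs.
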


Observe that $P_{n,+}^{Spin}$, respectively $P_{n,-}^{Spin}$, have a normalized trace $tr$ given by the tangles
$$tr(x)=\dfrac{1}{\sqrt{Q}^n}\begin{tikzpicture}[align1]
\clip[rounded corners] (0,-.6) rectangle (1,.6);
\draw[line width=1mm, rounded corners] (.45,-.4) rectangle (.85,.4);
\node at (.45,0)[Tbox, minimum width=.5cm, minimum height=.5cm]{\scriptsize $x$};
\node at (.85,0)[Tcirc, inner sep=0]{\scriptsize $n$};
\node at (.1,.45) {\scriptsize $\$$};
\node at (.125,0) {\scriptsize $\$$};
\draw[very thick, rounded corners] (0,-.6) rectangle (1,.6);
\end{tikzpicture}
\quad \text{ respectively }\quad
tr(x)=\dfrac{1}{\sqrt{Q}^{n+1}}\begin{tikzpicture}[align1]
\clip[rounded corners] (-.2,-.6) rectangle (1.1,.6);
\draw[shaded, rounded corners] (0,-.5) rectangle (1,.5);
\draw[line width=1mm, rounded corners, unshaded] (.45,-.4) rectangle (.85,.4);
\node at (.45,0)[Tbox, minimum width=.5cm, minimum height=.5cm]{\scriptsize $x$};
\node at (.85,0)[Tcirc, inner sep=0]{\scriptsize $n$};
\node at (-.1,.45) {\scriptsize $\$$};
\node at (.125,0) {\scriptsize $\$$};
\draw[very thick, rounded corners] (-.2,-.6) rectangle (1.1,.6);
\end{tikzpicture}$$
where some shadings have been omitted and thick lines denote $n$ parallel strings. We will use $P_{n,\pm}^{Spin}$ to denote these $C^*$-algebras and in more complicated tangles we will omit the shading.

\begin{defn}\normalfont
Define an inner product on $P_{n,+}^{Spin}$ by the tangle
$\inn{\xi}{\eta}_{Spin}=$\begin{tikzpicture}[align1]
\clip[rounded corners] (0,-.5) rectangle (2,.5);
\draw[line width=1mm] (.5,0)--(1.5,0);
\node at (.5,0)[Tbox, minimum width=0cm, minimum height=0cm]{\scriptsize $\xi$};
\node at (1.5,0)[Tbox, minimum width=0cm, minimum height=0cm]{\scriptsize $\eta^*$};
\node at (1,0)[Tcirc, inner sep=0cm]{\scriptsize $2n$};
\node at (.1,.35) {\scriptsize $\$$};
\node at (.2,0) {\scriptsize $\$$};
\node at (1.85,0) {\scriptsize $\$$};
\draw[rounded corners, very thick] (0,-.5) rectangle (2,.5);
\end{tikzpicture} called the \textit{spin inner product}.
We will also use $\norm{\cdot}_{Spin}$ to denote the norm coming from this inner product called the \textit{spin norm}.
\end{defn}

Observe that the spin inner product and spin norm are unnormalized versions of the trace inner product and $2$-norm from the trace. The spin inner product is more natural when constructing orthonormal basis in $P^{Spin}$ because it provides the right normalization for a cable cutting operation we will define later.

\begin{defn}\normalfont (\cite{Jon99},\cite{Jon19})
A $Q \times Q$ complex matrix, $H$, is called a \textit{complex Hadamard matrix} if $HH^*=QI$ and $\abs{H_{i,j}}=1$ for all $i,j$. Define $u=\sum_{i,j=1}^Q H_{i,j} \hat{i} \otimes \hat{j} \in P_{2,+}^{Spin}$ and observe that $u$ satisfies the following equalities
$$\begin{tikzpicture}[align1]
\clip[rounded corners] (.3,-.7) rectangle (1.7,.7);
\path[shaded] (.75,-2)--(1.25,-2)--(1.25,2)--(.75,2)--cycle;
\draw (.75,-2)--(.75,2)--(1.25,2)--(1.25,-2)--cycle;
\node at (1,.3) [Tbox, minimum width=.7cm, minimum height=.5cm]{\scriptsize $u$};
\node at (1,-.3) [Tbox, minimum width=.7cm, minimum height=.5cm]{\scriptsize $u^*$};
\node at (.4,0) {\scriptsize $\$$};
\node at (.55,.3) {\scriptsize $\$$};
\node at (.55,-.3) {\scriptsize $\$$};
\draw[rounded corners, very thick] (.3,-.7) rectangle (1.7,.7);
\end{tikzpicture}=
\begin{tikzpicture}[align1]
\clip[rounded corners] (.3,-.7) rectangle (1.7,.7);
\path[shaded] (.75,-2)--(1.25,-2)--(1.25,2)--(.75,2)--cycle;
\draw (.75,-2)--(.75,2)--(1.25,2)--(1.25,-2)--cycle;
\node at (.4,0) {\scriptsize $\$$};
\draw[rounded corners, very thick] (.3,-.7) rectangle (1.7,.7);
\end{tikzpicture} \quad \quad 
\begin{tikzpicture}[align1]
\clip[rounded corners] (.3,-.7) rectangle (1.7,.7);
\path[shaded] (.75,-2)--(1.25,-2)--(1.25,2)--(.75,2)--cycle;
\draw (.75,-2)--(.75,2)--(1.25,2)--(1.25,-2)--cycle;
\node at (1,.3) [Tbox, minimum width=.7cm, minimum height=.5cm]{\scriptsize $u^*$};
\node at (1,-.3) [Tbox, minimum width=.7cm, minimum height=.5cm]{\scriptsize $u$};
\node at (.4,0) {\scriptsize $\$$};
\node at (.55,.3) {\scriptsize $\$$};
\node at (.55,-.3) {\scriptsize $\$$};
\draw[rounded corners, very thick] (.3,-.7) rectangle (1.7,.7);
\end{tikzpicture}=
\begin{tikzpicture}[align1]
\clip[rounded corners] (.3,-.7) rectangle (1.7,.7);
\path[shaded] (.75,-2)--(1.25,-2)--(1.25,2)--(.75,2)--cycle;
\draw (.75,-2)--(.75,2)--(1.25,2)--(1.25,-2)--cycle;
\node at (.4,0) {\scriptsize $\$$};
\draw[rounded corners, very thick] (.3,-.7) rectangle (1.7,.7);
\end{tikzpicture}$$
$$\begin{tikzpicture}[align1]
\clip[rounded corners] (0,-.7) rectangle (2,.7);
\path[shaded] (.4,-2)--(.4,0)--(1.6,0)--(1.6,-2)--cycle;
\path[shaded] (.4,0)--(.4,2)--(1.6,2)--(1.6,0)--cycle;
\path[shadedw] (.8,.2) arc (180:0:.2) -- (1.2,-.2) arc(0:-180:.2) -- cycle;
\draw (.4,-2)--(.4,2)--(1.6,2)--(1.6,-2)--cycle;
\draw (.8,.2) arc (180:0:.2) -- (1.2,-.2) arc(0:-180:.2) -- cycle;
\node at (1.4,0) [Tbox, minimum width=.7cm, minimum height=.5cm]{\scriptsize $u^*$};
\node at (.6,0) [Tbox, minimum width=.7cm, minimum height=.5cm]{\scriptsize $u$};
\node at (.1,.55) {\scriptsize $\$$};
\node at (.15,0) {\scriptsize $\$$};
\node at (1.85,0) {\scriptsize $\$$};
\draw[rounded corners, very thick] (0,-.7) rectangle (2,.7);
\end{tikzpicture}=
\begin{tikzpicture}[align1]
\clip[rounded corners] (0,-.7) rectangle (2,.7);
\draw[shaded] (.4,.7) arc(-180:0:.6) --cycle;
\draw[shaded] (.4,-.7) arc(180:0:.6) --cycle;
\node at (.1,0) {\scriptsize $\$$};
\draw[rounded corners, very thick] (0,-.7) rectangle (2,.7);
\end{tikzpicture} \quad \quad
\begin{tikzpicture}[align1]
\clip[rounded corners] (0,-.7) rectangle (2,.7);
\path[shaded] (.3,-2)--(.3,0)--(1.7,0)--(1.7,-2)--cycle;
\path[shaded] (.3,0)--(.3,2)--(1.7,2)--(1.7,0)--cycle;
\path[shadedw] (.7,.2) arc (180:0:.3) -- (1.3,-.2) arc(0:-180:.3) -- cycle;
\draw (.3,-2)--(.3,2)--(1.7,2)--(1.7,-2)--cycle;
\draw (.7,.2) arc (180:0:.3) -- (1.3,-.2) arc(0:-180:.3) -- cycle;
\node at (1.5,0) [Tbox, minimum width=.7cm, minimum height=.5cm]{\scriptsize $u$};
\node at (.5,0) [Tbox, minimum width=.7cm, minimum height=.5cm]{\scriptsize $u^*$};
\node at (.1,.55) {\scriptsize $\$$};
\node at (.92,0) {\scriptsize $\$$};
\node at (1.08,0) {\scriptsize $\$$};
\draw[rounded corners, very thick] (0,-.7) rectangle (2,.7);
\end{tikzpicture}=
\begin{tikzpicture}[align1]
\clip[rounded corners] (0,-.7) rectangle (2,.7);
\draw[shaded] (.4,.7) arc(-180:0:.6) --cycle;
\draw[shaded] (.4,-.7) arc(180:0:.6) --cycle;
\node at (.1,0) {\scriptsize $\$$};
\draw[rounded corners, very thick] (0,-.7) rectangle (2,.7);
\end{tikzpicture}.$$
Furthermore, these conditions are equivalent to $u$ coming from a complex Hadamard matrix. Any element $u \in P_{2,+}^{Spin}$ satisfying these equalities is called a \textit{biunitary} in $P^{Spin}$.
\end{defn}

One might expect the first equality above to yield $Q \cdot id_{P_{2,+}^{Spin}}$, but the action of the planar operad on $P^{Spin}$ absorbs the factor of $Q$. We encourage the reader to verify the equalities in $P^{Spin}$. Observe that these are equivalent to type $II$ Reidemeister moves and so we will adopt notation from knot theory for $u$ and $u^*$. Let 
$$\begin{tikzpicture}[align.7]
\clip (0,0) circle (1cm);
\draw[shaded] (0,0)--(2,0)--(0,2)--(0,0);
\draw[shaded] (0,0)--(-2,0)--(0,-2)--(0,0);
\draw[->] (.5cm,0)--(.55cm,0);
\draw[very thick] (0,0) circle (1cm);
\end{tikzpicture}
=
\begin{tikzpicture}[align.7]
\clip (0,0) circle (1cm);
\draw[shaded] (0,0)--(2,0)--(0,2)--(0,0);
\draw[shaded] (0,0)--(-2,0)--(0,-2)--(0,0);
\node at (0,0)[Tcirc, inner sep=1mm]{\scriptsize $u$};
\node at (-4mm,4mm){\scriptsize $\$$};
\draw[very thick] (0,0) circle (1cm);
\end{tikzpicture}
\text{ and }
\begin{tikzpicture}[align.7]
\clip (0,0) circle (1cm);
\draw[shaded, very thick] (0,0) circle (1cm);
\draw[unshaded] (0,0)--(2,0)--(0,2)--(0,0);
\draw[unshaded] (0,0)--(-2,0)--(0,-2)--(0,0);
\draw[->] (.5cm,0)--(.55cm,0);
\draw[very thick] (0,0) circle (1cm);
\end{tikzpicture}
=
\begin{tikzpicture}[align.7]
\clip (0,0) circle (1cm);
\draw[shaded, very thick] (0,0) circle (1cm);
\draw[unshaded] (0,0)--(2,0)--(0,2)--(0,0);
\draw[unshaded] (0,0)--(-2,0)--(0,-2)--(0,0);
\node at (0,0)[Tcirc, inner sep=.5mm]{\scriptsize $u^*$};
\node at (4mm,4mm){\scriptsize $\$$};
\draw[very thick] (0,0) circle (1cm);
\end{tikzpicture},$$
then we have the type $II$ Reidemeister moves
$$\begin{tikzpicture}[align.7, scale=.5]
\clip (0,0) circle (2cm);
\draw[shaded] (-1.2,2) arc (90:-90:2cm) --(-1.2,-2)--(1.2,-2) arc (270:90:2cm) --(-1.2,2);
\draw[->] (-.8,0)--(-.8,.1mm);
\draw[very thick] (0,0) circle (2cm);
\end{tikzpicture}
=
\begin{tikzpicture}[align.7, scale=.5]
\clip (0,0) circle (2cm);
\draw[shaded] (-.5,2)--(-.5,-2)--(.5,-2)--(.5,2)--(-.5,2);
\draw[very thick] (0,0) circle (2cm);
\end{tikzpicture}
\text{ and }
\begin{tikzpicture}[align.7, scale=.5]
\clip (0,0) circle (2cm);
\draw[shaded, very thick] (0,0) circle (2cm);
\draw[unshaded] (-1.2,2) arc (90:-90:2cm) --(-1.2,-2)--(1.2,-2) arc (270:90:2cm) --(-1.2,2);
\draw[<-] (.8,0)--(.8,.1mm);
\draw[very thick] (0,0) circle (2cm);
\end{tikzpicture}
=
\begin{tikzpicture}[align.7, scale=.5]
\clip (0,0) circle (2cm);
\draw[shaded, very thick] (0,0) circle (2cm);
\draw[unshaded] (-.5,2)--(-.5,-2)--(.5,-2)--(.5,2)--(-.5,2);
\draw[very thick] (0,0) circle (2cm);
\end{tikzpicture}.$$

We will also use the following notation to simplify the use of biunitaries. Let $
\begin{tikzpicture}[align.7]
\clip[rounded corners] (0,-.8) rectangle (2,.8);
\draw[line width=1mm] (1,-1)--(1,1);
\draw[->] (.7,0)--(.7,.4);
\node at (1,-.2)[Tcirc, inner sep=.05mm]{\scriptsize $k$};
\node at (.15,0){\scriptsize $\$$};
\draw[rounded corners,very thick] (0,-.8) rectangle (2,.8);
\end{tikzpicture}
$ denote $k$ parallel strings with the left most string being oriented up and alternating orientations from left to right. Similarly let 
$\begin{tikzpicture}[align.7]
\clip[rounded corners] (0,-.8) rectangle (2,.8);
\draw[line width=1mm] (1,-1)--(1,1);
\draw[->] (1.3,0)--(1.3,.4);
\node at (1,-.2)[Tcirc, inner sep=.05mm]{\scriptsize $k$};
\node at (.15,0){\scriptsize $\$$};
\draw[rounded corners,very thick] (0,-.8) rectangle (2,.8);
\end{tikzpicture}$ denote alternating orientations from right to left.

\begin{prop}\cite{Jon19}
Let $P_{n,\pm}^{Spin}$ denote the $C^*$-algebras defined by the multiplication tangle and conjugation. Then there are injective unital trace-preserving $*$-algebra homomorphisms  $i_n \colon P_{n,\pm}^{Spin} \to P_{n+1,\pm}^{Spin}$ defined by
$i_n(x)=$
\begin{tikzpicture}[align.7]
\clip[rounded corners] (.1,-.5) rectangle (1.8,.75);
\draw[line width=1mm] (1,-1)--(1,1);
\draw (1.5,-1)--(1.5,1);
\node at (1,0)[Tbox, inner sep=1mm]{\scriptsize $x$};
\node at (.6,0) {\scriptsize $\$$};
\node at (.25,.3) {\scriptsize $\$$};
\node at (1,.5)[Tcirc, inner sep=0]{\scriptsize $n$};
\draw[very thick, rounded corners] (.1,-.5) rectangle (1.8,.75);
\end{tikzpicture} 
and $P_{2n,+}^{Spin} \cong M_{Q^n}(\mathbb C)$, $P_{2n+1,+}^{Spin} \cong M_{Q^n}(\mathbb C) \otimes \Delta_Q$ where $\Delta_Q$ is the algebra of diagonal matrices.
Furthermore, the tower of algebras $\mathbb C = P_{0,+}^{Spin} \subset P_{1,+}^{Spin} \subset P_{2,+}^{Spin} \subset ...$ is a basic construction with Jones projections $e_{n+2} =\frac{1}{\sqrt{Q}}$
\begin{tikzpicture}[align.7]
\clip[rounded corners] (0,-.4) rectangle (1.8,.4);
\draw[line width=1mm] (.6,-1)--(.6,1);
\draw (1.2,.4) circle (.3);
\draw (1.2,-.4) circle (.3);
\node at (.15,0) {\scriptsize $\$$};
\node at (.6,0)[Tcirc, inner sep=0]{\scriptsize $n$};
\draw[very thick, rounded corners] (0,-.4) rectangle (1.8,.4);
\end{tikzpicture} and traces induced by $tr$ on the planar algebra.
\end{prop}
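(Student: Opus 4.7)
The proof has three independent pieces: the inclusion maps $i_n$, the $C^*$-algebra identifications, and the basic construction structure. I would address them in this order.

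First, for the inclusion $i_n$, the defining tangle adjoins one through-string to the right of $x$. Stacking two such tangles and applying the multiplication tangle for $P_{n+1,\pm}^{Spin}$, tangle isotopy reveals the multiplication of $P_{n,\pm}^{Spin}$ in the interior together with an extra non-interacting through-string, so $i_n(xy)=i_n(x)i_n(y)$. Unitality and $*$-compatibility are immediate from the tangle definitions. Trace-preservation follows because capping $i_n(x)$ with the trace tangle at level $n+1$ introduces exactly one additional capped loop of value $\sqrt{Q}$, absorbed by the $\sqrt{Q}^{n+1}$ versus $\sqrt{Q}^n$ normalization. Injectivity then follows from the faithfulness of $tr$.

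For the matrix algebra identification, I would identify a basis element $\hat s_1\otimes\cdots\otimes \hat s_{2n}$ of $P_{2n,+}^{Spin}$ with the matrix unit $E_{(s_1,\ldots,s_n),(s_{2n},\ldots,s_{n+1})}$, the reversal recording the effect of the $*$-rule. Expanding the multiplication tangle via the state-sum formula, the $n$ internal shaded regions on the center line force $n$ Kronecker deltas matching the lower half of the first factor to the upper half of the second, and the prefactors $\sqrt{Q}^{n_T}Rot(\sigma)$ of the composite tangle reduce to $1$ after tallying shaded regions and rotation numbers; this yields $E_{I,J}E_{K,L}=\delta_{J,K}E_{I,L}$ and the $*$-rule gives $E_{I,J}^*=E_{J,I}$, so $P_{2n,+}^{Spin}\cong M_{Q^n}(\mathbb C)$. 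The odd case is analogous: the extra central boundary point contributes one additional index $k$ that is preserved by multiplication, producing matrix units $E_{I,J}^{(k)}$ multiplying as $M_{Q^n}(\mathbb C)\otimes \Delta_Q$.

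For the basic construction assertion, I would verify directly from the tangle that $e_{n+2}$ is a self-adjoint projection: idempotency comes from absorbing the interior closed loop via $\delta_+=\sqrt{Q}$, cancelling precisely the $1/\sqrt{Q}$ prefactor, while self-adjointness is manifest from the symmetry of the cup-cap tangle. The Jones relation $e_{n+2}\,x\,e_{n+2}=E(x)e_{n+2}$ for $x\in P_{n+1,+}^{Spin}$ is obtained by sliding $x$ into the cup-cap region so that one of its strings closes into a loop, leaving behind the planar-algebra conditional expectation tangle for $P_{n,+}^{Spin}\subset P_{n+1,+}^{Spin}$. To close the basic construction statement, compare dimensions: the identifications of (b) give $\dim P_{n+2,+}^{Spin}$ equal to $\dim \langle P_{n+1,+}^{Spin}, e_{n+2}\rangle$ computed via the standard Jones basic construction formula, forcing the inclusion of the subalgebra generated by $P_{n+1,+}^{Spin}$ and $e_{n+2}$ to be an equality.

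The main obstacle is normalization bookkeeping: the state-sum coefficients $\sqrt{Q}^{n_T}Rot(\sigma)$ produce several powers of $\sqrt{Q}$ that must cancel exactly in each identity, and verifying this requires careful accounting of shaded regions and rotation numbers for the composite tangles arising in multiplication, trace-preservation of $i_n$, and $e_{n+2}^2=e_{n+2}$. Once this bookkeeping is handled, everything else follows from routine planar-algebra manipulation together with the standard Jones basic construction theorem.
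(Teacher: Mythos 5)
Your proposal is correct and follows essentially the same route as the paper, whose proof is only a three-line sketch: the Jones projection tangles implement the conditional expectations, the algebras are identified via matrix units built from simple tensors, and a dimension count forces the tower to be a basic construction. Your write-up simply fills in the routine tangle-isotopy and normalization details that the paper leaves implicit.
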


\begin{proof}
The Jones projections defined above clearly implement the conditional expectations for $P_{n,+}^{Spin} \subset P_{n+1,+}^{Spin}$ with respect to $tr$. We may identify the algebras $P_{n,+}^{Spin}$ by building matrix units from simple tensors $B^{\otimes n} = \set{\otimes_{i=1}^n \hat{s}_i}{\hat{s}_i \in B}$. Finally, a dimension argument then forces these inclusions to be a basic construction.
\end{proof}

\begin{prop}\label{psi}
The spin model commuting square and its basic construction are given by
$$\begin{array}{ccc}
\Delta_Q & \subset & M_Q(\mathbb C)\\
\cup & & \cup \\
\mathbb C & \subset & H \Delta_Q H^* \\
\end{array}
\cong
\begin{array}{ccc}
P_{1,+}^{Spin} & \subset & P_{2,+}^{Spin}\\
\cup & & \cup \\
\mathbb C & \subset & \psi_u(P_{1,+}^{Spin}) \\
\end{array}
\text{ and }
\left(\begin{array}{ccc}
P_{n,+}^{Spin} & \subset & P_{n+1,+}^{Spin}\\
\cup & & \cup \\
\psi_u(P_{n-1,+}^{Spin}) & \subset & \psi_u(P_{n,+}^{Spin}) \\
\end{array}\right)_{n \geq 1}$$
where $\psi_u$ is given by
$$\psi_u(x)=\begin{tikzpicture}[align1]
\clip[rounded corners] (.5,-.8) rectangle (1.5,.8);
\draw[line width=1mm] (1,1)--(1,-1);
\draw (.75,1)--(.75,.6) to [out=-90, in=90] (1.35,.2)--(1.35,-.2) to [out=-90,in=90] (.75,-.6)--(.75,-1);
\draw[<-] (1.35,.01)--(1.35,-.01);
\node at (1,0)[Tbox, minimum width=.5cm, minimum height=.5cm]{\scriptsize $x$};
\node at (1,.63)[Tcirc, inner sep=0]{\scriptsize $n$};
\node at (.69,0) {\scriptsize $\$$};
\node at (.6,.65) {\scriptsize $\$$};
\draw[rounded corners, very thick] (.5,-.8) rectangle (1.5,.8);
\end{tikzpicture} \text{ for } x \in P^{Spin}_{n,+}.$$
\end{prop}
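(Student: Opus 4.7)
The plan is to verify that $\psi_u$ is a unital trace-preserving $*$-algebra embedding, then match the bottom commuting square, and finally propagate the identification up the tower using the Jones projections from the previous proposition.

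For the embedding step, unitality $\psi_u(1) = 1$ and multiplicativity $\psi_u(xy) = \psi_u(x)\psi_u(y)$ both reduce to type $II$ Reidemeister moves: stacking $\psi_u(x)$ above $\psi_u(y)$ produces an adjacent $u u^*$ pair between the two boxes which collapses by the biunitary relations, leaving the tangle for $\psi_u(xy)$. The $*$-property follows because vertical reflection of the defining tangle interchanges $u$ with $u^*$ and matches the definition of $\psi_u(x^*)$. Trace preservation is then a direct tangle computation using the biunitary cap relations to collapse the extra strand.

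For the bottom square, I would expand $u = \sum_{i,j} H_{i,j}\, \hat{i} \otimes \hat{j}$ and $u^* = \sum_{i,j} \overline{H_{i,j}}\, \hat{j} \otimes \hat{i}$ in the tangle defining $\psi_u(\hat{k})$ for $\hat{k} \in P_{1,+}^{Spin}$ and sum states. The result is $\psi_u(\hat{k}) = \sum_{i,j} H_{i,k} \overline{H_{j,k}}\, \hat{i} \otimes \hat{j}$, which under the matrix unit identification $\hat{i} \otimes \hat{j} \leftrightarrow e_{ij}$ of the previous proposition is precisely $H e_{kk} H^*$. Hence $\psi_u(P_{1,+}^{Spin}) = H\Delta_Q H^*$, and the spin model commuting square is realized inside $P^{Spin}$. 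The commuting square property follows from the biunitary cap conditions, which force both compositions of conditional expectations to equal the scalar trace onto $\mathbb{C}$.

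For the tower, the previous proposition presents $P_{n,+}^{Spin} \subset P_{n+1,+}^{Spin}$ as a basic construction with Jones projection $e_{n+2}$. To realize the shifted square $\psi_u(P_{n-1,+}^{Spin}) \subset P_{n,+}^{Spin} \subset P_{n+1,+}^{Spin}$ as a basic construction, I would check that $e_{n+2}$ also implements the conditional expectation onto $\psi_u(P_{n-1,+}^{Spin})$: multiplying $e_{n+2}$ against $\psi_u(y)$ collapses the surrounding biunitary strands via another type $II$ Reidemeister cancellation, reducing to the standard conditional expectation tangle for the inclusion $P_{n-1,+}^{Spin} \subset P_{n,+}^{Spin}$ shifted by $\psi_u$. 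Since the Jones index is $Q$ at every level, a dimension count then forces a basic construction. The main obstacle is precisely this last step: tracking the extra biunitary strand introduced by $\psi_u$ through the cup-cap of $e_{n+2}$ while keeping the $u$ versus $u^*$ orientation conventions consistent, so that the requisite Reidemeister cancellations actually fire and produce a genuine basic construction rather than just an inclusion of the correct index.
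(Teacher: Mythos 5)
The paper offers no proof of this proposition at all: it is stated and then used, the only surrounding justification being the observation that $\psi_u$ is a trace-preserving $*$-homomorphism (the content is essentially imported from Jones's planar algebra papers). Judged on its own terms, your outline follows the standard route and most of it is sound: multiplicativity, unitality, the $*$-compatibility and trace preservation of $\psi_u$ do reduce to type $II$ Reidemeister moves exactly as you describe, expanding the crossings identifies the image of $P^{Spin}_{1,+}$ with $H\Delta_QH^*$, and the commuting square condition for the bottom square is precisely biunitarity, i.e. $HH^*=QI$ together with $|H_{i,j}|=1$. One caveat on constants: since $\hat{k}$ is a minimal projection of $P^{Spin}_{1,+}$ and $\psi_u$ is a unital trace-preserving $*$-homomorphism, $\psi_u(\hat{k})$ must be the projection $\frac{1}{Q}He_{kk}H^*$, not $He_{kk}H^*$; your state sum needs the $P^{Spin}$ normalizations tracked more carefully, although this does not affect the identification of the image algebra.

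The genuine gap is in the tower step. Identifying the iterated basic construction of the commuting square with the displayed squares means showing, inductively, that the new bottom-right corner $\{\psi_u(P^{Spin}_{n,+}),e_{n+2}\}''$ equals $\psi_u(P^{Spin}_{n+1,+})$, where $e_{n+2}$ is the Jones projection for $P^{Spin}_{n,+}\subset P^{Spin}_{n+1,+}$ from the previous proposition. What you propose to verify, namely that $e_{n+2}$ compresses $\psi_u(y)$ to the expectation onto $\psi_u(P^{Spin}_{n-1,+})$, is equivalent to the commuting square identity $E_{P^{Spin}_{n,+}}(\psi_u(y))=\psi_u(E_{P^{Spin}_{n-1,+}}(y))$; that is necessary, but it does not identify the corner, and a dimension count cannot even begin until you know $e_{n+2}\in\psi_u(P^{Spin}_{n+1,+})$, which you never establish. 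The efficient repair is the identity $\psi_u(e_{n+1})=e_{n+2}$: slide the wrapping strand across the cup and the cap using the second (horizontal) pair of biunitary relations, then cancel its remaining crossings with the through strands by ordinary type $II$ moves; no closed loops are created, so the $\frac{1}{\sqrt{Q}}$ prefactors agree. With this in hand, $\{\psi_u(P^{Spin}_{n,+}),e_{n+2}\}''=\psi_u(\{P^{Spin}_{n,+},e_{n+1}\}'')=\psi_u(P^{Spin}_{n+1,+})$ by the previous proposition and injectivity of $\psi_u$, the commuting square property at each level is exactly your conditional-expectation computation, and the dimension count becomes unnecessary.
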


Observe that $\psi_u$ is a trace preserving $*$-homomorphism on $\bigcup_n P^{Spin}_{n,+}$ and so $\psi_u$ extends to a trace preserving $*$-homomorphism on $M=\overline{\bigcup_n P^{Spin}_{n,+}}^{w.o.}$ which gives us the index $Q$ horizontal subfactor, $\psi_u(M)=N \subset M$, of the spin model commuting square. Similarly, the vertical subfactor $P\subset R$ is given by $\psi_{u^*}(R)=P \subset R$, $R=\overline{\bigcup_n P^{Spin}_{n,+}}^{w.o.}$. We will use $R_k$ and $M_k$ to denote the Jones towers $P \subset R \subset R_1 \subset \cdots$ and $N \subset M \subset M_1 \subset \cdots$.

\begin{rem}\cite{JS97}
Applying Ocneanu compactness to $P \subset R_k$ using the symmetric commuting square,
 $\begin{array}{ccc}
P_{1,+}^{Spin} & \subset & P_{k+1,+}^{Spin}\\
\cup & & \cup \\
\mathbb C & \subset & \psi_{u}(P_{k,+}^{Spin}) \\
\end{array}$
we find that $P' \cap R_{k-1} \cong (P_{1,+}^{Spin})' \cap \psi_{u}( P_{k,+}^{Spin})$ inside $P_{k+1,+}^{Spin}$. Observe that $(P_{1,+}^{Spin})' \cap P_{k+1,+}^{Spin}$ is given by elements of the form
\begin{tikzpicture}[align.7]
\clip[rounded corners] (0,-1) rectangle (1.5,1);
\draw[shaded] (.4,-1)--(.4,1)--(1,1)--(1,-1)--(.4,-1);
\draw[line width=1mm] (1,-1)--(1,1);
\node at (1,0)[Tbox, minimum width=.5, minimum height=.5]{\scriptsize $x$};
\node at (1,.65)[Tcirc, inner sep=.05mm]{\scriptsize $k$};
\node at (.2,0) {\scriptsize $\$$};
\node at (.55,0) {\scriptsize $\$$};
\draw[very thick, rounded corners] (0,-1) rectangle (1.5,1);
\end{tikzpicture}
for $x \in P_{k,-}^{Spin}$. Therefore $\psi_{u}(y) \in P_{k+1,+}^{Spin}$ belongs to $(P_{1,+}^{Spin})' \cap \psi_{u}( P_{k,+}^{Spin})$ iff there exists an $x \in P_{k,-}^{Spin}$ such that
\begin{tikzpicture}[align.7]
\clip[rounded corners] (0,-1) rectangle (2,1);
\draw (.35,-1)to [out=90,in=-90] (1.6,0)--(1.6,0) to [out=90,in=-90] (.35,1) -- (1,1)--(1,-1)--(.35,-1);
\draw[->] (1.6,0)--(1.6,.01);
\draw[line width=1mm]  (1,1)--(1,-1);
\node at (1,.75)[Tcirc, inner sep=.05mm]{\scriptsize $k$};
\node at (1,0)[Tbox, minimum width=.5, minimum height=.5]{\scriptsize $y$};
\node at (.15,0) {\scriptsize $\$$};
\node at (.5,0) {\scriptsize $\$$};
\draw[very thick, rounded corners] (0,-1) rectangle (2,1);
\end{tikzpicture}
$=$
\begin{tikzpicture}[align.7]
\clip[rounded corners] (0,-1) rectangle (2,1);
\draw (.35,-1)--(.35,1);
\draw[line width=1mm]  (1,1)--(1,-1);
\node at (1,.65)[Tcirc, inner sep=.05mm]{\scriptsize $k$};
\node at (1,0)[Tbox, minimum width=.5, minimum height=.5]{\scriptsize $x$};
\node at (.15,0) {\scriptsize $\$$};
\node at (.5,0) {\scriptsize $\$$};
\draw[very thick, rounded corners] (0,-1) rectangle (2,1);
\end{tikzpicture}.
\end{rem}

\begin{defn}\normalfont (\cite{Jon99},\cite{Jon19})
We will call $y \in P_{k,+}^{Spin}$ \textit{flat} with respect to $u$ if there exists $x \in P_{k,-}^{Spin}$ such that
\begin{tikzpicture}[align.7]
\clip[rounded corners] (0,-1) rectangle (2,1);
\draw (0,.65)--(2,.65);
\draw[->] (1.5,.65)--(1.4,.65);
\draw[line width=1mm]  (1,1)--(1,-1);
\node at (1,-.65)[Tcirc, inner sep=.05mm]{\scriptsize $k$};
\node at (1,0)[Tbox, minimum width=.5, minimum height=.5]{\scriptsize $y$};
\node at (.15,0) {\scriptsize $\$$};
\node at (.5,0) {\scriptsize $\$$};
\draw[very thick, rounded corners] (0,-1) rectangle (2,1);
\end{tikzpicture}
$=$
\begin{tikzpicture}[align.7]
\clip[rounded corners] (0,-1) rectangle (2,1);
\draw (0,-.5)--(2,-.5);
\draw[->] (1.5,-.5)--(1.4,-.5);
\draw[line width=1mm]  (1,1)--(1,-1);
\node at (1,.65)[Tcirc, inner sep=.05mm]{\scriptsize $k$};
\node at (1,0)[Tbox, minimum width=.5, minimum height=.5]{\scriptsize $x$};
\node at (.15,-.75) {\scriptsize $\$$};
\node at (.5,0) {\scriptsize $\$$};
\draw[very thick, rounded corners] (0,-1) rectangle (2,1);
\end{tikzpicture} where $u$ is the biunitary used for the diagrams. The flat elements with respect to $u$ clearly form a unital subalgebra of $P_{k,+}^{Spin}$ that we will denote by $P_{k,+}^{u}$.
\end{defn}

\begin{prop} (\cite{Jon99},\cite{Jon19})
$P_{0,+}^{u}$ and $P_{1,+}^{u}$ are isomorphic to $\mathbb C$ and so $P\subset R$ is an extremal subfactor. Furthermore, $P_{2,+}^{u}$ is abelian.
\end{prop}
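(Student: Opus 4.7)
The plan is to use the preceding Ocneanu-compactness remark, which rephrases flatness of $y \in P_{k,+}^{Spin}$ as $\psi_u(y) \in (P_{1,+}^{Spin})' \cap P_{k+1,+}^{Spin}$, and then to read off the algebraic structure of $P_{k,+}^u$ from this relative commutant using that $\psi_u$ is an injective $*$-homomorphism.

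For $P_{0,+}^u$ there is nothing to do since $P_{0,+}^{Spin} = \mathbb{C}$. For $P_{1,+}^u$, I would observe that $P_{1,+}^{Spin} \cong \Delta_Q$ is a MASA in $P_{2,+}^{Spin} \cong M_Q(\mathbb{C})$, so the relative commutant in $P_{2,+}^{Spin}$ equals $\Delta_Q$, and the flatness condition reduces to $\psi_u(y) \in \Delta_Q$. By Proposition \ref{psi} we have $\psi_u(\Delta_Q) = H\Delta_Q H^*$, so the condition reads $HyH^* \in \Delta_Q$ for $y = \operatorname{diag}(y_1,\dots,y_Q)$. The diagonal entries compute to $(HyH^*)_{ii} = \sum_k |H_{ik}|^2 y_k = \sum_k y_k$ independently of $i$ since $|H_{ij}|=1$, so $HyH^*$ must be a scalar multiple of the identity, and invertibility of $H$ then forces $y \in \mathbb{C}$. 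Hence $P_{1,+}^u \cong \mathbb{C}$, which via Ocneanu compactness says $P' \cap R = \mathbb{C}$, so $P \subset R$ is irreducible; irreducibility of a finite-index $II_1$-subfactor automatically yields extremality.

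For $P_{2,+}^u$ abelian, I plan to prove the stronger statement that the full relative commutant $(P_{1,+}^{Spin})' \cap P_{3,+}^{Spin}$ is already abelian. Under the identification $P_{3,+}^{Spin} \cong M_Q(\mathbb{C}) \otimes \Delta_Q$ from the earlier proposition on matrix models of $P^{Spin}$, the pictorial description of $i_n$ as adding a strand on the right shows that $i_2 \circ i_1$ embeds $P_{1,+}^{Spin}$ as a MASA of the $M_Q(\mathbb{C})$-factor (the algebra of the first two strands) tensored with the identity of the $\Delta_Q$-factor (the third strand). The commutant of such a subalgebra inside $M_Q(\mathbb{C}) \otimes \Delta_Q$ is $\Delta_Q \otimes \Delta_Q$, a $Q^2$-dimensional abelian subalgebra. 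Since $\psi_u$ is an injective $*$-homomorphism, $P_{2,+}^u$ is $*$-isomorphic to a subalgebra of this abelian algebra, hence itself abelian.

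The main delicate point is in the last paragraph: one must verify carefully that $i_2 \circ i_1$ really lands $P_{1,+}^{Spin}$ as a MASA of the first tensor factor of $M_Q(\mathbb{C}) \otimes \Delta_Q$ and not in a configuration straddling both factors, which would enlarge the commutant. This should follow directly from the diagrammatic description of $i_n$ together with the standard identification of disjoint strands with commuting tensor factors in $P^{Spin}$, but it deserves a careful diagrammatic check before the argument is finalized.
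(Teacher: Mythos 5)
Your argument is correct, and it diverges from the paper's proof in a genuine way, mainly in the last step. For $P_{1,+}^u$ the paper evaluates the flatness relation entrywise with a state, obtaining $\overline{u}_{q,r}y_r=\overline{u}_{q,r}x_q$ and hence $y_r=x_q$ for all $q,r$; your reduction to the commuting-square fact $\Delta_Q\cap H\Delta_Q H^*=\mathbb C$ via the constant-diagonal computation $(HyH^*)_{ii}=\sum_k y_k$ is the same use of unimodularity in matrix form, and your observation that irreducibility of a finite-index subfactor gives extremality is exactly how the paper reads that implication. The real difference is the abelianness of $P_{2,+}^u$: the paper constructs an anti-isomorphism $\rho^3\circ\psi_{u^*}$ from $P_{2,+}^u$ onto $P_{2,-}^u\subset P_{2,-}^{Spin}\cong\Delta_Q\otimes\Delta_Q$, whereas you embed $\psi_u(P_{2,+}^u)$ into the full relative commutant $(P_{1,+}^{Spin})'\cap P_{3,+}^{Spin}$ and argue that this commutant is already abelian. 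Your flagged \emph{delicate point} is in fact already settled by the paper's Ocneanu-compactness remark, which identifies $(P_{1,+}^{Spin})'\cap P_{k+1,+}^{Spin}$ with $1\otimes P_{k,-}^{Spin}$, for $k=2$ an algebra isomorphic to $\Delta_Q\otimes\Delta_Q$ of dimension $Q^2$; this confirms your claim that $(i_2\circ i_1)(P_{1,+}^{Spin})$ sits as the diagonal MASA of the first tensor factor of $M_Q(\mathbb C)\otimes\Delta_Q$, so you may simply cite that remark instead of redoing the diagrammatic check. What your route buys is that it avoids the rotation trick entirely and stays inside the $+$-shaded tower; what the paper's route buys is slightly more information, namely an explicit anti-isomorphism of $P_{2,+}^u$ with the flat algebra $P_{2,-}^u$ on the opposite shading rather than merely an embedding into an abelian algebra.
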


\begin{proof}
$dim (P_{0,+}^{u}) =1$ is immediate since $dim (P_{0,+}^{Spin}) = 1$.

Let $y_q$ and $x_q$ be the $\hat{q}$ coefficients of $x,y \in \mathbb C^Q \cong P_{1,\pm}^{Spin}$. Consider the following state on the flatness condition for $x$ and $y$:
\begin{tikzpicture}[align.7]
\clip[rounded corners] (0,-1) rectangle (2,1);
\draw[shaded] (0,.5)--(2,.5)--(2,-1)--(1,-1)--(1,1)--(0,1)--(0,.5);
\draw[->] (1.5,.5)--(1.4,.5);
\node at (.5,.7) {\scriptsize $q$};
\node at (1.6,-.5) {\scriptsize $r$};
\node at (1,0)[Tbox, minimum width=.5, minimum height=.5]{\scriptsize $y$};
\node at (.15,0) {\scriptsize $\$$};
\node at (.5,0) {\scriptsize $\$$};
\draw[very thick, rounded corners] (0,-1) rectangle (2,1);
\end{tikzpicture}
$=$
\begin{tikzpicture}[align.7]
\clip[rounded corners] (0,-1) rectangle (2,1);
\draw[shaded] (0,-.5)--(2,-.5)--(2,-1)--(1,-1)--(1,1)--(0,1)--(0,-.5);
\draw[->] (1.5,-.5)--(1.4,-.5);
\node at (.4,.5) {\scriptsize $q$};
\node at (1.5,-.8) {\scriptsize $r$};
\node at (1,0)[Tbox, minimum width=.5, minimum height=.5]{\scriptsize $x$};
\node at (.15,-.75) {\scriptsize $\$$};
\node at (.5,0) {\scriptsize $\$$};
\draw[very thick, rounded corners] (0,-1) rectangle (2,1);
\end{tikzpicture}. This is equivalent to the equation $\overline{u}_{q,r}y_r = \overline{u}_{q,r}x_q$ for all $q,r \in \{\hat{1},...\hat{Q}\}$. Since entries in $u$ have modulus one, we have $y_r = x_q$ for all $q,r$ and so $y \in \mathbb C \cdot 1$.

Finally, observe that $P_{2,-}^{Spin} \cong \Delta_Q \otimes \Delta_Q$ and so $P_{2,-}^{u}$ is abelian. We also have an anti-isomorphism $\rho^3 \circ \psi_{u} \colon P_{2,+}^{u} \to P_{2,-}^{u}$ given by
$\rho^3 \circ \psi_{u^*} (y)=$
\begin{tikzpicture}[align.7]
\clip[rounded corners] (0,-1) rectangle (2,1);
\begin{scope}[rotate=180, shift=(0:-2cm)]
\draw[shaded] (.35,-1)to [out=90,in=-90] (1.6,0)--(1.6,0) to [out=90,in=-90] (.35,1) -- (.8,1)--(.8,-1)--(1.2,-1)--(1.2,1)--(2,1)--(2,-1)--(.35,-1);
\draw[->] (1.6,0)--(1.6,.01);
\node at (1,0)[Tbox, minimum width=.5, minimum height=.5]{\rotatebox{180}{\scriptsize $y$}};
\node at (.5,0) {\scriptsize $\$$};
\end{scope}
\node at (.15,0) {\scriptsize $\$$};
\draw[very thick, rounded corners] (0,-1) rectangle (2,1);
\end{tikzpicture}
$=$
\begin{tikzpicture}[align.7]
\clip[rounded corners] (0,-1) rectangle (2,1);
\begin{scope}[rotate=180, shift=(0:-2cm)]
\draw[shaded] (.35,-1)--(.35,1)--(.8,1)--(.8,-1)--(1.2,-1)--(1.2,1)--(2,1)--(2,-1)--(.35,-1);
\node at (1,0)[Tbox, minimum width=.5, minimum height=.5]{\rotatebox{180}{\scriptsize $x$}};
\node at (.5,0) {\scriptsize $\$$};
\end{scope}
\node at (.15,0) {\scriptsize $\$$};
\draw[very thick, rounded corners] (0,-1) rectangle (2,1);
\end{tikzpicture} where $\rho$ is the counter clockwise rotation tangle on $P_{2,+}^{Spin}$. Therefore $P_{2,+}^{u} \cong P' \cap R_1$ is also abelian.
\end{proof}

\begin{defn}\normalfont
Let $1 \otimes \colon P_{k,-}^{Spin} \to P_{k+1,+}^{Spin}$ denote the trace preserving $*$-algebra morphism\\
\begin{tikzpicture}[align.7]
\clip[rounded corners] (0,-1) rectangle (1.5,1);
\draw[shaded] (-1,-2)--(-1,2)--(1,2)--(1,-2)--(-1,-2);
\draw[line width=1mm] (1,-1)--(1,1);
\node at (1,0)[Tbox, minimum width=.5, minimum height=.5]{\scriptsize $x$};
\node at (1,.65)[Tcirc, inner sep=.05mm]{\scriptsize $k$};
\node at (.2,0) {\scriptsize $\$$};
\node at (.55,0) {\scriptsize $\$$};
\draw[very thick, rounded corners] (0,-1) rectangle (1.5,1);
\end{tikzpicture}
$\mapsto$
\begin{tikzpicture}[align.7]
\clip[rounded corners] (0,-1) rectangle (1.5,1);
\draw[shaded] (.4,-2)--(.4,2)--(1,2)--(1,-2)--(.4,-2);
\draw[line width=1mm] (1,-1)--(1,1);
\node at (1,0)[Tbox, minimum width=.5, minimum height=.5]{\scriptsize $x$};
\node at (1,.65)[Tcirc, inner sep=.05mm]{\scriptsize $k$};
\node at (.2,0) {\scriptsize $\$$};
\node at (.55,0) {\scriptsize $\$$};
\draw[very thick, rounded corners] (0,-1) rectangle (1.5,1);
\end{tikzpicture}.
Observe that $1\otimes$ extends to an injective trace preserving map\\
$1\otimes \colon \overline{\bigcup_k P_{k,-}^{Spin}}^{w.o.} \to M = \overline{\bigcup_k P_{k,+}^{Spin}}^{w.o.}$. Hence we may define a von Neumann subalgebra\\ $L=1\otimes \overline{\bigcup_k P_{k,-}^{Spin}}^{w.o.}$ of $M$.
\end{defn}

\begin{rem}\normalfont
We have already shown that $P' \cap R_{k-1} \cong (P_{1,+}^{Spin})' \cap \psi_{u}( P_{k,+}^{Spin}) \cong 1 \otimes P_{k,-}^{Spin} \cap \psi_{u}( P_{k,+}^{Spin})$ and so $\overline{\bigcup_k P'\cap R_k }^{w.o.} \cong N \cap L$. Thus we have a quadrilateral of von Neumann algebras $\begin{array}{ccc} N & \subset & M \\ \cup & & \cup \\ N \cap L & \subset & L \\ \end{array}$. Let $E_N$ and $E_L$ be the trace preserving conditional expectations to $N$ and $L$ respectively. In \cite{SW94}, Sano and Watatani defined the angle operator $\Theta = \sqrt{E_NE_LE_N - E_N \land E_L}$ and showed that the spectrum $\sigma(\Theta)$ is finite iff $[M : N\cap L] < \infty$ provided that $M,N,L$ and $N \cap L$ are $II_1$ factors. This gives us reason to expect the spectrum of the angle operator to contain important information about the higher relative commutants of $P \subset R$. We will use a slight variation of the angle operator that is more convenient for a planar algebra description.
\end{rem}

\begin{defn}\normalfont \cite{Jon19}
Let $N$ and $L$ be as above. Then there are unique trace preserving conditional expectations $E_N$ and $E_L$. Define $\Theta_{u} = E_NE_LE_N \in B(L^2(M))$ as the \textit{angle operator}.
\end{defn}

\begin{lem}[Cable cutting]\label{cabling}
Let $\{ b_i\}_{i=1}^{Q^n} \subset P_{n,+}^{Spin}$ be an orthonormal basis of $P_{n,+}^{Spin}$ with respect to $\inn{\cdot}{\cdot}_{Spin}$, then
$$\begin{tikzpicture}[align1]
\clip[rounded corners] (.5,-.8) rectangle (1.5,.8);
\draw[line width=1mm] (1,1)--(1,-1);
\node at (1,0)[Tcirc, inner sep=0]{\scriptsize $2n$};
\node at (.6,.65) {\scriptsize $\$$};
\draw[rounded corners, very thick] (.5,-.8) rectangle (1.5,.8);
\end{tikzpicture}
=\sum_{i=1}^{Q^n}
\begin{tikzpicture}[align1]
\clip[rounded corners] (.5,-.8) rectangle (1.5,.8);
\draw[line width=1mm] (1,.3)--(1,1);
\draw[line width=1mm] (1,-.3)--(1,-1);
\node at (1,.3)[Tbox, minimum width=.5cm,minimum height=.5cm]{\scriptsize $b_i$};
\node at (1,-.3)[Tbox, minimum width=.5cm,minimum height=.5cm]{\scriptsize $b_i^*$};
\node at (.6,.65) {\scriptsize $\$$};
\node at (.65,.3) {\scriptsize $\$$};
\node at (.65,-.3) {\scriptsize $\$$};
\draw[rounded corners, very thick] (.5,-.8) rectangle (1.5,.8);
\end{tikzpicture}.$$
\end{lem}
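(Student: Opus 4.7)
The plan is to identify both sides of the equation with the identity operator of $\mathrm{End}(V)$, where $V := P_{n,+}^{Spin}$ is viewed as a Hilbert space under $\langle \cdot, \cdot \rangle_{Spin}$. The space $V$ has dimension $Q^n$, matching the number of terms in the sum on the right-hand side. For $X \in P_{2n,+}^{Spin}$ and $\eta \in V$ drawn as a box with all $2n$ boundary points at the top (a ``ket''), stacking $\eta$ onto the top strings of $X$ produces another element of $V$, defining a linear map $\rho\colon P_{2n,+}^{Spin} \to \mathrm{End}(V)$. Since $P_{2n,+}^{Spin} \cong M_{Q^n}(\mathbb{C})$ by the earlier proposition and is therefore simple, and since $\rho(1) = \mathrm{id}_V \neq 0$, the map $\rho$ is injective; a dimension count ($Q^{2n}$ on each side) then promotes $\rho$ to an algebra isomorphism.

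Next, I would verify that both sides map to $\mathrm{id}_V$ under $\rho$. The left-hand side is $2n$ parallel strings, the multiplicative identity of $P_{2n,+}^{Spin}$, and so maps to $\mathrm{id}_V$ directly. For the right-hand side, the box $b_i^*$ drawn with its strings pointing downward represents the linear functional $\eta \mapsto \langle \eta, b_i \rangle_{Spin}$, because the $*$-operation reverses tensor factors, which corresponds diagrammatically to reflecting the box into a ``bra''. Therefore, stacking $b_i$ above $b_i^*$ yields the rank-one operator $\eta \mapsto \langle \eta, b_i \rangle_{Spin}\, b_i$, and summing over the orthonormal basis gives $\mathrm{id}_V$ by Parseval.

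Since $\rho$ is injective and both sides map to $\mathrm{id}_V$, they coincide in $P_{2n,+}^{Spin}$. The main obstacle is the bookkeeping: one must confirm that the $*$-operation corresponds to the diagrammatic reflection converting ``kets'' into ``bras'', and that the stacking action is compatible with the spin inner product in exactly the right way so that stacking $b_i^*$ below implements $\langle \cdot, b_i \rangle_{Spin}$ rather than a variant with rotated or shaded conventions. An alternative, more hands-on route is to verify the identity for one specific orthonormal basis, say the simple tensors $B^{\otimes n}$, by a direct state-sum computation, and then invoke the invariance of $\sum_i b_i \otimes b_i^*$ under unitary change of basis (immediate from conjugate linearity of $*$ together with the relation $\sum_j U_{ij}\overline{U_{i'j}} = \delta_{ii'}$).
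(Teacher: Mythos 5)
Your argument is correct and is essentially the paper's own proof: the paper likewise represents $P_{2n,+}^{Spin}\cong M_{Q^n}(\mathbb C)$ faithfully on $P_{n,+}^{Spin}$ by the stacking tangle and observes that both sides of the identity act as the identity operator (your injectivity-via-simplicity and Parseval steps just spell out why faithfulness and ``acts by the identity'' hold). The normalization bookkeeping you flag is exactly what the unnormalized spin inner product is designed to make work, so no discrepancy arises.
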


\begin{proof}
Observe that $x \in P_{2n,+}^{Spin} \cong M_{Q^n}(\mathbb C)$ has a faithful irreducible representation on $\xi \in P_{n,+}^{Spin}$ by
$x\xi=\begin{tikzpicture}[align1]
\clip[rounded corners] (-.6,-.4) rectangle (.9,.5);
\draw[line width=1mm] (-.6,0)--(.5,0);
\node at (0,0)[Tbox, minimum width=0cm,minimum height=0cm]{\scriptsize $x$};
\node at (.5,0)[Tbox, minimum width=0cm,minimum height=0cm]{\scriptsize $\xi$};
\node at (-.4,0)[Tcirc, inner sep=0]{\scriptsize $n$};
\node at (.75,0) {\scriptsize $\$$};
\node at (0,0.3) {\scriptsize $\$$};
\node at (-.5,.35) {\scriptsize $\$$};
\draw[rounded corners, very thick] (-.6,-.4) rectangle (.9,.5);
\end{tikzpicture}.$ Taking $\{b^*_i\}_{i=1}^{Q^n}$ as a basis for $P_{n,+}^{Spin}$, both sides of the equality above act by the identity, hence they are equal.
\end{proof}

\begin{prop}\label{cond exp}
Let $\{ b_i\}_{i=1}^{Q} \subset P_{1,+}^{Spin}$ be an orthonormal basis, then for $x \in P_{k+1,+}^{Spin}$, $E_N$ and $E_L$ are given by the following diagrams:

$$
E_N(x) = \dfrac{1}{\sqrt{Q}}
\begin{tikzpicture}[align.7]
\clip[rounded corners] (0,-1.5) rectangle (3,1.5);
\draw[line width=1mm] (1.5,-1.5)--(1.5,1.5);
\draw (1.2,.3) to [out=90,in=90] (2.3,.2)--(2.3,-.2) to [out=-90,in=-90] (1.2,-.3)--cycle;
\draw (1,-1.5)--(1,-1.2) to [out=90,in=-90] (2.6,-.3)--(2.6,.3) to [out=90,in=-90] (1,1.2)--(1,1.5);
\draw[<-] (2.3,-.01)--(2.3,.01);
\draw[->] (2.6,-.01)--(2.6,.01);
\node at (1.5,0)[Tbox, minimum width=.75cm, minimum height=.5cm]{\scriptsize $x$};
\node at (1.5,1.2)[Tcirc, inner sep=.05mm]{\scriptsize $k$};
\node at (.15,0) {\scriptsize $\$$};
\node at (.85,0) {\scriptsize $\$$};
\draw[very thick, rounded corners] (0,-1.5) rectangle (3,1.5);
\end{tikzpicture}
\,\,\,\,\,\,\,\,\,\,\,\,\,\,\,\,\,\,\,\,\,
E_L(x)=\dfrac{1}{\sqrt{Q}}
\begin{tikzpicture}[align.7]
\clip[rounded corners] (0,-1.5) rectangle (2.5,1.5);
\draw[shaded] (.4,-1.5)--(.4,1.5)--(1.5,1.5)--(1.5,-1.5) --cycle;
\draw[unshaded] (1.2,.3) arc (0:180:.25cm) --(.7,-.3) arc (180:360:.25cm) --cycle;
\draw[line width=1mm] (1.5,-1.5)--(1.5,1.5);
\node at (1.4,0)[Tbox, minimum width=.5cm, minimum height=.5cm]{\scriptsize $x$};
\node at (1.5,1)[Tcirc, inner sep=.05mm]{\scriptsize $k$};
\node at (.15,0) {\scriptsize $\$$};
\node at (.9,0) {\scriptsize $\$$};
\draw[very thick, rounded corners] (0,-1.5) rectangle (2.5,1.5);
\end{tikzpicture}
=\dfrac{1}{\sqrt{Q}}
\sum_{i=1}^Q
\begin{tikzpicture}[align.7]
\clip[rounded corners] (0,-1.5) rectangle (2.5,1.5);
\draw[shaded] (.9,-1.5)--(.9,1.5)--(1.7,1.5)--(1.7,-1.5) --cycle;
\draw[line width=1mm] (1.7,-1.5)--(1.7,1.5);
\node at (1.35,0)[Tbox, minimum width=.8cm, minimum height=.5cm]{\scriptsize $x$};
\node at (.9,.9)[Tbox, minimum width=.2cm, minimum height=.2cm]{\scriptsize $b_i$};
\node at (.9,-.9)[Tbox, minimum width=.2cm, minimum height=.2cm]{\scriptsize $b_i^*$};
\node at (1.7,1)[Tcirc, inner sep=.05mm]{\scriptsize $k$};
\node at (.15,0) {\scriptsize $\$$};
\node at (.6,0) {\scriptsize $\$$};
\node at (.4,.9) {\scriptsize $\$$};
\node at (.4,-.9) {\scriptsize $\$$};
\draw[very thick, rounded corners] (0,-1.5) rectangle (2.5,1.5);
\end{tikzpicture}.$$
Furthermore, the angle operator is given by the planar tangle
$$\Theta_u(x)=\dfrac{1}{\sqrt{Q}^3}
\begin{tikzpicture}[align1]
\clip[rounded corners] (.5,1.4) rectangle (2.5,-1.4);
\draw[line width=1mm] (1.5,-1.5)--(1.5,1.5);
\draw (1,1.5)--(1,1.4) to [out=-90, in=90] (2.4,.5)--(2.4,-.5) to [out=-90,in=90] (1,-1.4)--(1,-1.5);
\draw (1.25,.25) to [out=90,in=90] (1.8,.2)--(1.8,-.2) to [out=-90,in=-90] (1.25,-.25)--cycle;
\draw (.9,.3) to [out=90,in=90] (2,.3)--(2,-.3) to [out=-90,in=-90] (.9,-.3)--cycle;
\draw (.7,.4) to [out=90,in=90] (2.2,.4)--(2.2,-.4) to [out=-90,in=-90] (.7,-.4)--cycle;
\draw[<-] (1.8,-.01)--(1.8,.01);
\draw[->] (2,-.01)--(2,.01);
\draw[<-] (2.2,-.01)--(2.2,.01);
\draw[->] (2.4,-.01)--(2.4,.01);
\node at (1.4,0)[Tbox, minimum width=.5cm, minimum height=.5cm]{\scriptsize $x$};
\node at (1.5,1.2)[Tcirc, inner sep=0.05mm]{\scriptsize $k$};
\node at (1.05,0) {\scriptsize $\$$};
\node at (.6,1.25) {\scriptsize $\$$};
\draw[rounded corners, very thick] (.5,1.4) rectangle (2.5,-1.4);
\end{tikzpicture}
\quad \quad \quad
\Theta_u(\psi_u(x))=\dfrac{1}{Q}\psi_u
\left(
\begin{tikzpicture}[align.7]
\clip[rounded corners] (0,-1.5) rectangle (3,1.5);
\draw[line width=1mm] (1.5,-1.5)--(1.5,1.5);
\draw (1.5,0) circle (.9cm);
\draw (1.5,0) circle (.7cm);
\draw[->] (2.2,0)--(2.2,0.01);
\draw[<-] (2.4,0)--(2.4,0.01);
\node at (1.5,0)[Tbox, minimum width=.5, minimum height=.5]{\scriptsize $x$};
\node at (1.5,1.2)[Tcirc, inner sep=.05mm]{\scriptsize $k$};
\node at (1,0) {\scriptsize $\$$};
\node at (.15,0) {\scriptsize $\$$};
\draw[very thick, rounded corners] (0,-1.5) rectangle (3,1.5);
\end{tikzpicture}
\right).$$
\end{prop}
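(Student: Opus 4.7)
The plan is to verify each formula by identifying the displayed tangle as the unique trace-preserving bimodular projection onto the relevant subalgebra, then to obtain the $\Theta_u$ formulas by composition.

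For $E_L$: since $L = 1\otimes\overline{\bigcup_k P_{k,-}^{Spin}}^{w.o.}$, I would first apply Lemma \ref{cabling} with an orthonormal basis of $P_{1,+}^{Spin}$ to the small unshaded loop in the first tangle, producing the second displayed form as an explicit sum $\frac{1}{\sqrt{Q}}\sum_i b_i(-)b_i^*$. From this second form the range is manifestly contained in $1\otimes P_{k,-}^{Spin}$, since the leftmost strand of $x$ is decoupled from the rest of the diagram. Trace preservation follows because the small loop evaluates to $\sqrt{Q}$ under the normalized trace, canceling the $1/\sqrt{Q}$ prefactor, while $L$-bimodularity is immediate since elements of $L$ live in the leftmost shaded region and commute past the inserted basis elements. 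Uniqueness of the trace-preserving conditional expectation identifies this map with $E_L$.

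For $E_N$: the stated tangle wraps one strand of $x$ around the right of the box, with biunitaries $u, u^*$ implicitly placed at the four crossings via the knot-theoretic convention of the preliminaries, and also inserts an oriented biunitary closed loop. I would verify three things. First, the range lies in $N = \psi_u(M)$: using the type II Reidemeister moves satisfied by $u$, the outer wrapping is isotoped into the canonical $\psi_u$ shape, and applying Lemma \ref{cabling} to the closed loop exhibits the tangle explicitly as $\psi_u$ applied to an element of $M$. Second, trace preservation: the $1/\sqrt{Q}$ prefactor matches the index-$Q$ normalization, so that closing the diagram by the Markov trace reproduces $tr(x)$. Third, $N$-bimodularity follows since any $\psi_u$-structure can be pushed through the outer wrapping by planar isotopy. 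Uniqueness again identifies the tangle with $E_N$.

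The first $\Theta_u$ formula is obtained by literally stacking the three tangles of $E_N E_L E_N$; the $1/\sqrt{Q}^3$ prefactor is the product of the three individual normalizations, and the nested oriented strands are precisely the two $E_N$ wrappings flanking the $E_L$ cup-cap pair. For $\Theta_u(\psi_u(x))$, the outer $E_N$ acts trivially on $\psi_u(x)\in N$, so $\Theta_u(\psi_u(x)) = E_N E_L(\psi_u(x))$; I would apply $E_L$ and then $E_N$ to $\psi_u(x)$ and then reorganize the resulting diagram by type II Reidemeister moves. The $\psi_u$-wrapping already present in $\psi_u(x)$ together with the outer $E_N$-wrapping combine into a single $\psi_u$ box enclosing $x$, and the remaining biunitary curves settle into the two concentric oriented circles displayed in the statement. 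The main obstacle is this last step: one must track orientations carefully and check that, after all Reidemeister simplifications, the two concentric circles emerge with the specific orientations shown and that exactly one closed loop contributes a factor of $\sqrt{Q}$ to convert the prefactor $1/\sqrt{Q}^2$ arising from the two conditional expectations into the asserted $1/Q$.
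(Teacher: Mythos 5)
Your overall route is the paper's: the paper likewise identifies the two tangles with $E_N$ and $E_L$ through the uniqueness of the trace-preserving conditional expectation (it checks $tr(xy)=tr(E_N(x)y)$ for $y\in\psi_u\bigl(\bigcup_k P^{Spin}_{k,+}\bigr)$ via type $II$ Reidemeister moves and the loop parameters, and $tr(xy)=tr(E_L(x)y)$ for $y\in 1\otimes\bigcup_k P^{Spin}_{k,-}$ via the cable cutting lemma \ref{cabling} and the loop parameters), and then obtains both $\Theta_u$ formulas simply by composing the tangles. Your range/bimodularity/trace-preservation check is an equivalent characterization of the same expectation, so there is no genuinely different idea here.

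Two bookkeeping points in your sketch would not survive the actual computation as written. First, in the first $E_L$ picture the small unshaded cup--cap is attached to $x$ (it contracts the extra string of $x\in P^{Spin}_{k+1,+}$), so it is not a free loop evaluating to $\sqrt{Q}$; the $\sqrt{Q}$ that cancels the prefactor comes from closing the new decoupled through-string at the far left, and it is this first form (not the summed second form, whose individual terms are not in $L$) that makes the containment of the range in $1\otimes P^{Spin}_{k,-}$ manifest. Second, your constant tracking in the last step is inconsistent: once you invoke $E_N(\psi_u(x))=\psi_u(x)$, only two tangles are composed, the prefactor is $(1/\sqrt{Q})^2=1/Q$ already, and no free loop is removed in the simplification --- the $\psi_u$-wrapping already present in $\psi_u(x)$ is closed off by $E_L$ into the inner concentric circle rather than merging with the final $E_N$-wrapping, whose new strand alone supplies the outer $\psi_u$ structure. (Alternatively, composing all three tangles gives $(1/\sqrt{Q})^3$ and then exactly one Reidemeister $II$ move produces a free loop worth $\sqrt{Q}$.) Expecting an extra $\sqrt{Q}$ on top of the prefactor $1/\sqrt{Q}^2$, as you write, would yield $1/\sqrt{Q}$ rather than the asserted $1/Q$.
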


\begin{proof}
First, $tr(xy)=tr(E_N(x)y)$ for all $y \in \psi_u(\bigcup_k P_{k,+}^{Spin})$ follows from type $II$ Reidemeister moves and the loop parameters of $P^{Spin}$. Similarly,  $tr(xy)=tr(E_L(x)y)$ for all $y \in 1 \otimes \bigcup_k P_{k,-}^{Spin}$ can be shown from the cable cutting lemma \ref{cabling} and the loop parameters of $P^{Spin}$. Finally, the angle operator tangle follows from the tangles for $E_N$ and $E_L$.
\end{proof}

\begin{defn}\normalfont
For a biunitary $u$ define the operator $\theta_u \colon \bigcup_k P_{k,+}^{Spin} \to \bigcup_k P_{k,+}^{Spin}$ by the tangle
$$\theta_u(x)=\dfrac{1}{Q}\begin{tikzpicture}[align.7]
\clip[rounded corners] (0,-1.5) rectangle (3,1.5);
\draw[line width=1mm] (1.5,-1.5)--(1.5,1.5);
\draw (1.5,0) circle (.9cm);
\draw (1.5,0) circle (.7cm);
\draw[->] (2.2,0)--(2.2,0.01);
\draw[<-] (2.4,0)--(2.4,0.01);
\node at (1.5,0)[Tbox, minimum width=.5, minimum height=.5]{\scriptsize $x$};
\node at (1.5,1.2)[Tcirc, inner sep=.05mm]{\scriptsize $k$};
\node at (1,0) {\scriptsize $\$$};
\node at (.15,0) {\scriptsize $\$$};
\draw[very thick, rounded corners] (0,-1.5) rectangle (3,1.5);
\end{tikzpicture}.$$
Since $\psi_u \colon L^2(N) \to L^2(M)$ is an isometry and $\Theta_u \psi_u = \psi_u \theta_u$, $\theta_u$ defines a bounded operator on $\overline{\bigcup_k P_{k,+}^{Spin}}^{\norm{\cdot}_{2,tr}}$ which we will identify as $L^2(N)$.
\end{defn}

Applying the algebra isomorphism $Ad_{\frac{1}{\sqrt{Q}}H^*}$ yields
$$\begin{array}{ccc}
\Delta_Q & \subset & M_Q(\mathbb C)\\
\cup & & \cup \\
\mathbb C & \subset & H \Delta_Q H^* \\
\end{array}
\cong
\begin{array}{ccc}
\psi_{u^*}(P_{1,+}^{Spin}) & \subset & P_{2,+}^{Spin}\\
\cup & & \cup \\
\mathbb C & \subset & P_{1,+}^{Spin} \\
\end{array}$$
and so $\Theta_{u^*}$ is the angle operator corresponding the vertical subfactor. Since
$$\begin{tikzpicture}[align.7]
\begin{scope}
\clip (0,0) circle (1cm);
\draw[shaded] (0,0)--(2,0)--(0,2)--(0,0);
\draw[shaded] (0,0)--(-2,0)--(0,-2)--(0,0);
\draw[->] (.5cm,0)--(.55cm,0);
\draw[very thick] (0,0) circle (1cm);
\end{scope}
\node at (0,-1.2) {\scriptsize $u \text{ used for crossing}$};
\end{tikzpicture}
=
\begin{tikzpicture}[align.7]
\clip (0,0) circle (1cm);
\draw[shaded] (0,0)--(2,0)--(0,2)--(0,0);
\draw[shaded] (0,0)--(-2,0)--(0,-2)--(0,0);
\node at (0,0)[Tcirc, inner sep=1mm]{\scriptsize $u$};
\node at (-4mm,4mm){\scriptsize $\$$};
\draw[very thick] (0,0) circle (1cm);
\end{tikzpicture}
=
\begin{tikzpicture}[align.7]
\begin{scope}
\clip (0,0) circle (1cm);
\draw[shaded] (0,0)--(2,0)--(0,2)--(0,0);
\draw[shaded] (0,0)--(-2,0)--(0,-2)--(0,0);
\draw[->] (0,-0.55cm)--(0,-0.54cm);
\draw[very thick] (0,0) circle (1cm);
\end{scope}
\node at (0,-1.2) {\scriptsize $u^* \text{ used for crossing}$};
\end{tikzpicture}$$

$$\begin{tikzpicture}[align.7]
\begin{scope}
\clip (0,0) circle (1cm);
\draw[shaded, very thick] (0,0) circle (1cm);
\draw[unshaded] (0,0)--(2,0)--(0,2)--(0,0);
\draw[unshaded] (0,0)--(-2,0)--(0,-2)--(0,0);
\draw[->] (.5cm,0)--(.55cm,0);
\draw[very thick] (0,0) circle (1cm);
\end{scope}
\node at (0,-1.2) {\scriptsize $u \text{ used for crossing}$};
\end{tikzpicture}
=
\begin{tikzpicture}[align.7]
\clip (0,0) circle (1cm);
\draw[shaded, very thick] (0,0) circle (1cm);
\draw[unshaded] (0,0)--(2,0)--(0,2)--(0,0);
\draw[unshaded] (0,0)--(-2,0)--(0,-2)--(0,0);
\node at (0,0)[Tcirc, inner sep=.5mm]{\scriptsize $u^*$};
\node at (4mm,4mm){\scriptsize $\$$};
\draw[very thick] (0,0) circle (1cm);
\end{tikzpicture}
=
\begin{tikzpicture}[align.7]
\begin{scope}
\clip (0,0) circle (1cm);
\draw[shaded, very thick] (0,0) circle (1cm);
\draw[unshaded] (0,0)--(2,0)--(0,2)--(0,0);
\draw[unshaded] (0,0)--(-2,0)--(0,-2)--(0,0);
\draw[->] (0,.55cm)--(0,.54cm);
\draw[very thick] (0,0) circle (1cm);
\end{scope}
\node at (0,-1.2) {\scriptsize $u^* \text{ used for crossing}$};
\end{tikzpicture}$$
we can express $\Theta_{u^*}$ with tangles where $u$ is used to interpret crossings. Here we have these tangles where $u$ is used for the crossings and the middle $k$ strings have alternating orientations
$$\Theta_{u^*}(x)=\dfrac{1}{\sqrt{Q}^3}
\begin{tikzpicture}[align1]
\clip[rounded corners] (.5,1.4) rectangle (2.5,-1.4);
\draw[line width=1mm] (1.5,-1.5)--(1.5,1.5);
\draw (1,1.5)--(1,1.4) to [out=-90, in=90] (2.4,.5)--(2.4,-.5) to [out=-90,in=90] (1,-1.4)--(1,-1.5);
\draw (1.25,.25) to [out=90,in=90] (1.8,.2)--(1.8,-.2) to [out=-90,in=-90] (1.25,-.25)--cycle;
\draw (.9,.3) to [out=90,in=90] (2,.3)--(2,-.3) to [out=-90,in=-90] (.9,-.3)--cycle;
\draw (.7,.4) to [out=90,in=90] (2.2,.4)--(2.2,-.4) to [out=-90,in=-90] (.7,-.4)--cycle;
\draw[->] (1.35,.42)--(1.35,.54);
\draw[<-] (1.35,-.42)--(1.35,-.54);
\node at (1.4,0)[Tbox, minimum width=.5cm, minimum height=.5cm]{\scriptsize $x$};
\node at (1.5,1.2)[Tcirc, inner sep=0.05mm]{\scriptsize $k$};
\node at (1.05,0) {\scriptsize $\$$};
\node at (.6,1.25) {\scriptsize $\$$};
\draw[rounded corners, very thick] (.5,1.4) rectangle (2.5,-1.4);
\end{tikzpicture}
\quad \quad \quad
\Theta_{u^*}(\psi_{u^*}(x))=\dfrac{1}{Q}\psi_{u^*}
\left(
\begin{tikzpicture}[align.7]
\clip[rounded corners] (0,-1.5) rectangle (3,1.5);
\draw[line width=1mm] (1.5,-1.5)--(1.5,1.5);
\draw (1.5,0) circle (.9cm);
\draw (1.5,0) circle (.7cm);
\draw[->] (1.32,.37)--(1.32,.56);
\draw[<-] (1.32,-.37)--(1.32,-.56);
\node at (1.5,0)[Tbox, minimum width=.5, minimum height=.5]{\scriptsize $x$};
\node at (1.5,1.2)[Tcirc, inner sep=.05mm]{\scriptsize $k$};
\node at (1,0) {\scriptsize $\$$};
\node at (.15,0) {\scriptsize $\$$};
\draw[very thick, rounded corners] (0,-1.5) rectangle (3,1.5);
\end{tikzpicture}
\right)=\dfrac{1}{Q}\psi_{u^*}(\theta_{u^*}(x)).$$

\begin{prop}\label{equalspectra}
$\sigma(\theta_u|_{P_{k,+}^{Spin}})=\sigma(\theta_{\overline{u}}|_{P_{k,+}^{Spin}})=\sigma(\theta_{u^T}|_{P_{k,+}^{Spin}})=\sigma(\theta_{u^*}|_{P_{k,+}^{Spin}})$ and $dim(P_{k,+}^{u})=dim(P_{k,+}^{\overline{u}})=dim(P_{k,+}^{u^T})=dim(P_{k,+}^{u^*})$ for all $k \in \mathbb N$.
\end{prop}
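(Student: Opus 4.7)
The key observation is that the four biunitaries $u, \overline{u}, u^T, u^*$ are related by natural symmetries of the spin planar algebra, and these symmetries lift to (anti-)isometric intertwiners on each $P_{k,+}^{Spin}$ that send $\theta_u$ to $\theta_{u'}$ and flat elements with respect to $u$ to flat elements with respect to $u'$. Because $\Theta_u = E_N E_L E_N$ is self-adjoint on $L^2(M)$ and $\theta_u$ is unitarily equivalent to $\Theta_u|_{L^2(N)}$ via the isometry $\psi_u$, the finite-dimensional restriction $\theta_u|_{P_{k,+}^{Spin}}$ is self-adjoint for the spin inner product and so has real spectrum; hence anti-linear intertwiners suffice to conclude spectrum equality.

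For $\overline{u}$, let $c\colon P_{k,+}^{Spin}\to P_{k,+}^{Spin}$ be componentwise complex conjugation in the basis $B^{\otimes k}$. This is an anti-linear isometry for the spin inner product. The partition function formula $Z_T(f)=\sum_\sigma \sqrt{Q}^{n_T}\mathrm{Rot}(\sigma)\prod_D f(D)_{\sigma_D}\sigma_{D^T}$ has real scalar prefactors apart from the label coefficients themselves, so $c(Z_T(f))=Z_T(c\circ f)$ for any tangle $T$. Applied to the tangles for $\theta_u$ and for the flatness equation (at every crossing of which the biunitary $u$ is inserted), this yields $c\theta_u=\theta_{\overline{u}}c$ and $c(P_{k,+}^u)=P_{k,+}^{\overline{u}}$.

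For $u^T$, regard $u=\sum H_{ij}\hat{i}\otimes\hat{j}$ as an element of $P_{2,+}^{Spin}$; the $180^\circ$ rotation $\rho^2$ sends $u$ to $\sum H_{ji}\hat{i}\otimes\hat{j}=u^T$. Redraw the $\theta_u$ tangle as a shaded tangle with explicit $u$-boxes at each crossing and apply a $180^\circ$ rotation to the whole picture. By planar algebra covariance the rotated tangle computes $\rho^2(\theta_u(x))$, and after relabelling every $u$-box as $\rho^2(u)=u^T$ the rotated tangle is exactly $\theta_{u^T}$ applied to a rotated version of $x$. Reading this off produces a linear isometry $r\colon P_{k,+}^{Spin}\to P_{k,+}^{Spin}$ with $r\theta_u=\theta_{u^T}r$ and $r(P_{k,+}^u)=P_{k,+}^{u^T}$. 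Since $u^*=\overline{u^T}$, the composition $c\circ r$ is the desired anti-linear isometric intertwiner between $\theta_u$ and $\theta_{u^*}$ together with a bijection $P_{k,+}^u\leftrightarrow P_{k,+}^{u^*}$.

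The main technical obstacle is the $u^T$ step: one must carefully verify that the rotated tangle matches the $\theta_{u^T}$ tangle applied to a well-defined transform of $x$, tracking the positions of the marked intervals $\$$, the alternating-orientation conventions on the middle thick strand, and employing the type-$II$ Reidemeister moves and loop-parameter identities of $P^{Spin}$ as needed. Once the three intertwiners are established, they are (anti-)isometric for the spin inner product, so they transfer both spectral data and dimensions of the flat subalgebras, completing the proof.
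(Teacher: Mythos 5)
Your $\overline{u}$ step is exactly the paper's (entrywise conjugation intertwines $\theta_u$ and $\theta_{\overline{u}}$), but the $u^T$ step contains a genuine error: the $180^\circ$ rotation does \emph{not} intertwine $\theta_u$ with $\theta_{u^T}$. When you replace each crossing by an explicit $u$-box and rotate the whole picture, the boxes indeed become $\rho^2(u)=u^T$, but the rotation simultaneously reverses the local orientation data of the strings at every crossing, and by the paper's own convention a $u^T$-box at a reversed-orientation crossing is the same thing as a $u$-box at the original crossing (``interpreting crossings with $u^T$ corresponds to reversing all orientations''). The two effects cancel, as they must: a global rotation is isotopic to the identity, so all it can produce is a boundary relation of the form $\rho^k\,\theta_u = \theta_u\,\rho^k$, i.e.\ the half-rotation \emph{commutes} with $\theta_u$; it cannot convert $\theta_u$ into $\theta_{u^T}$. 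You can see this concretely on $P_{2,+}^{Spin}$: in coordinates $(\theta_u)_{(a_1,a_2),(b_1,b_2)}$ is proportional to $\bigl|\sum_m H_{m,b_1}\overline{H}_{m,b_2}\overline{H}_{m,a_1}H_{m,a_2}\bigr|^2$ (cf.\ the appendix), and conjugating by the tensor flip $\hat{i}\otimes\hat{j}\mapsto\hat{j}\otimes\hat{i}$ returns $\theta_u$ itself, whereas $\theta_{u^T}$ involves sums over columns of $H$ and is a different matrix in general. So the claimed isometry $r$ with $r\theta_u=\theta_{u^T}r$ and $r(P^u_{k,+})=P^{u^T}_{k,+}$ is unsubstantiated and, as constructed, false; consequently the $u^*$ step, which composes $r$ with conjugation, also collapses.

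What the paper does instead is deliberately \emph{not} a unitary conjugation: it introduces the one-circle partial closure $\phi_u$ (and its adjoint $\phi_u^*$), which is neither isometric nor invertible, and exploits the diagrammatic identities $\phi_u^*\phi_u = Q\,\theta_u$ and $\phi_u\phi_u^* = Q\,\theta_{u^T}$. This is an ``$AB$ versus $BA$'' argument: if $\theta_u\xi=\lambda\xi$ with $\lambda\neq 0$, then $\phi_u(\xi)$ is a nonzero eigenvector of $\theta_{u^T}$ with the same eigenvalue, giving a bijection between eigenspaces for all nonzero eigenvalues; equality of the full spectra and of $\dim P^u_{k,+}=\dim P^{u^T}_{k,+}$ then follows because both operators act on the same space $P_{k,+}^{Spin}$ of dimension $Q^k$ and the flat elements form the $1$-eigenspace. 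If you want to salvage your outline, replace the rotation step by this $\phi_u$ argument (or prove by some other means that the two operators are unitarily equivalent — which is true, but only as a consequence of the equal multiplicities, not via a geometric rotation).
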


\begin{proof}
Fix $k \in \mathbb N$ and define a conjugation on $P_{k,+}^{Spin}$ by $\overline{\xi}=\sum_{\hat{b} \in B^{\otimes k}} \overline{\xi_b}\hat{b}$ where $\xi=\sum_{\hat{b} \in B^{\otimes k}} \xi_b \hat{b}$, $\xi_b \in \mathbb C$ and $B$ is the basis used to define the action of tangles on $P^{Spin}$. Observe that $\overline{\theta_u(\xi)}=\theta_{\overline{u}}(\overline{\xi})$ and so the eigenvalues of $\theta_u|_{P_{k,+}^{Spin}}$ and $\theta_{\overline{u}}|_{P_{k,+}^{Spin}}$ coincide and the eigenspaces are isomorphic by conjugation.

Define
$$\phi_u(\xi)=\begin{tikzpicture}[align.7]
	\clip[rounded corners] (0,-1.5) rectangle (3,1.5);
	\draw (1.3,0)--(1.3,.35) to [out=90, in=90] (.85,.35)--(.85,-1.5);
	\draw (1.7,0)--(1.7,-.35) to [out=-90, in=-90] (2.1,-.35)--(2.1,1.5);
	\draw[line width=1mm] (1.6,0)--(1.6,1.5);
	\draw[line width=1mm] (1.4,0)--(1.4,-1.5);
	\draw (1.5,0) circle (.9cm);
	\draw[->] (2.4,-.001)--(2.4,.001);
	\node at (1.5,0)[Tbox, minimum width=.5cm, minimum height=.5cm]{\scriptsize $\xi$};
	\node at (1,0) {\scriptsize $\$$};
	\node at (.1,0) {\scriptsize $\$$};
	\draw[very thick, rounded corners] (0,-1.5) rectangle (3,1.5);
\end{tikzpicture}
\quad\quad
\phi_u^*(\eta)=\begin{tikzpicture}[align.7]
	\clip[rounded corners] (0,-1.5) rectangle (3,1.5);
	\draw (1.3,0)--(1.3,-.35) to [out=-90, in=-90] (.85,-.35)--(.85,1.5);
	\draw (1.7,0)--(1.7,.35) to [out=90, in=90] (2.1,.35)--(2.1,-1.5);
	\draw[line width=1mm] (1.6,0)--(1.6,-1.5);
	\draw[line width=1mm] (1.4,0)--(1.4,1.5);
	\draw (1.5,0) circle (.9cm);
	\draw[<-] (2.4,-.001)--(2.4,.001);
	\node at (1.5,0)[Tbox, minimum width=.5cm, minimum height=.5cm]{\scriptsize $\eta$};
	\node at (1,0) {\scriptsize $\$$};
	\node at (.1,0) {\scriptsize $\$$};
	\draw[very thick, rounded corners] (0,-1.5) rectangle (3,1.5);
\end{tikzpicture}.$$
Interpreting string crossings with $u^T$ corresponds to reversing all orientations and so
$$\theta_{u^T}(\eta) =\dfrac{1}{Q} \begin{tikzpicture}[align.7]
	\begin{scope}
	\clip[rounded corners] (0,-1.5) rectangle (3,1.5);
	\draw[line width=1mm] (1.5,-1.5)--(1.5,1.5);
	\draw (1.5,0) circle (.9cm);
	\draw (1.5,0) circle (.7cm);
	\draw[<-] (2.2,0)--(2.2,0.01);
	\draw[->] (2.4,0)--(2.4,0.01);
	\node at (1.5,0)[Tbox, minimum width=.5cm, minimum height=.5cm]{\scriptsize $\eta$};
	\node at (1.5,1.2)[Tcirc, inner sep=.05mm]{\scriptsize $k$};
	\node at (1,0) {\scriptsize $\$$};
	\node at (.15,0) {\scriptsize $\$$};
	\draw[very thick, rounded corners] (0,-1.5) rectangle (3,1.5);
	\end{scope}
	\node at (1.5,-1.7) {\scriptsize $u \text{ used for crossings}$};
\end{tikzpicture}.$$
If $\theta_u(\xi)=\lambda \xi$ and $\lambda \neq 0$ then $\theta_{u^T}(\phi_u(\xi))=\phi_u(\theta_u(\xi)) = \lambda \phi_u(\xi)$. Furthermore, $\phi_u(\xi) \neq 0$ since $\phi_u^*(\phi_u(\xi))=Q\theta_u(\xi)=Q\lambda\xi \neq 0$. This implies that $\sigma(\theta_u|_{P_{k,+}^{Spin}})$ and $\sigma(\theta_{u^T}|_{P_{k,+}^{Spin}})$ coincide and there is a bijection between eigenspaces. Therefore $\sigma(\theta_u|_{P_{k,+}^{Spin}})=\sigma(\theta_{\overline{u}}|_{P_{k,+}^{Spin}})=\sigma(\theta_{u^T}|_{P_{k,+}^{Spin}})=\sigma(\theta_{u^*}|_{P_{k,+}^{Spin}})$ and $dim(P_{k,+}^{u})=dim(P_{k,+}^{\overline{u}})=dim(P_{k,+}^{u^T})=dim(P_{k,+}^{u^*})$.
\end{proof}

\section{Computation of $\tau(\Theta_u)$}
In \cite{Pop94} Popa constructed the symmetric enveloping algebra, $M \underset{e_N}{\boxtimes} M^{op}$, for extremal subfactors from $C^*(M,e_N,JMJ)$, the $C^*$-algebra generated by $M$, $e_N$, and $JMJ$ on $L^2(M)$ where $J$ as usual denotes the anti-linear isometry on $L^2(M)$ given by $J(x)=x^*$, $x \in M \subset L^2(M)$.  Popa's construction of the symmetric enveloping algebra and the existence of the trace $\tau$ on $C^*(M,e_N,JMJ)$ play a key role in this section. We will show that the spectra of $\theta_u$ and $\Gamma\Gamma^*$ coincide when $N \subset M$ is an amenable subfactor in the sense of \cite{Pop94b}, where $\Gamma$ denotes the principal graph of $N \subset M$ written as a $V(\Gamma_{even}) \times V(\Gamma_{odd})$ matrix. Note that in \cite{KS99}, Kodiyalam and Sunder showed that
$\left[\begin{array}{cc} 0 & \Gamma \\ \Gamma^* & 0 \\ \end{array}\right]$ and $\left[\begin{array}{cc} 0 & \Lambda \\ \Lambda^* & 0 \\ \end{array}\right]$
have the same spectrum with zero as the only possible exception where $\Lambda$ is the dual graph.

\begin{thm} \cite{Pop94}
An extremal hyperfinite subfactor $N \subset M$ is amenable iff $\norm{\Gamma}^2 = [M:N]$ where $\Gamma$ is the principal graph of $N \subset M$.
\end{thm}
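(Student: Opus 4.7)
The plan is to follow Popa's original strategy from \cite{Pop94b}, which identifies subfactor amenability with a Kesten-type norm condition on the principal graph. Throughout, the principal graph $\Gamma$ is the bipartite Bratteli diagram of the inclusion $M'\cap M_n \subset M'\cap M_{n+1}$ (equivalently $N'\cap M_{n-1} \subset N'\cap M_n$), and its adjacency is the $V(\Gamma_{even}) \times V(\Gamma_{odd})$ matrix $\Gamma$.

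First I would establish the easy inequality $\norm{\Gamma}^2 \leq [M:N]$, which holds with no amenability assumption. The Perron--Frobenius data for $\Gamma\Gamma^*$ is supplied by the trace vector $\vec{s}$ whose entries are the $tr$-weights of minimal projections in the higher relative commutants; the basic-construction and Markov-trace identities force $\Gamma\Gamma^* \vec{s} = [M:N]\, \vec{s}$. Since $\vec{s}$ has strictly positive entries, $[M:N]$ is the Perron eigenvalue and $\norm{\Gamma}^2 \leq [M:N]$, with equality iff the graph satisfies the Kesten condition.

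For the forward direction (amenable $\Rightarrow$ $\norm{\Gamma}^2 = [M:N]$), I would use Popa's reformulation of amenability of $N\subset M$ as the existence of a Følner sequence of projections $p_n \in \bigcup_k M'\cap M_k$ with $[M:N]^k tr(p_n) / \dim_{tr}(p_n) \to 1$ in an appropriate sense, or equivalently, the existence of a hypertrace on $C^*(M,e_N,JMJ)$ (the trace $\tau$ mentioned in the introduction). Such a sequence, read off as unit vectors on the vertex set of $\Gamma$, exhibits $[M:N]$ as an approximate eigenvalue of $\Gamma\Gamma^*$, hence $\norm{\Gamma}^2 = [M:N]$.

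The hard direction (Kesten condition $\Rightarrow$ amenable, assuming hyperfiniteness) is the main obstacle. The idea is to promote the Perron--Frobenius vector of $\Gamma\Gamma^*$ to an actual invariant mean on the tower. Concretely, when $\norm{\Gamma}^2 = [M:N]$, standard graph-theoretic arguments (Kesten, or a direct application of Cheeger-type inequalities for bipartite graphs) produce a sequence of finitely supported positive vectors on $V(\Gamma_{even})$ whose Rayleigh quotients with $\Gamma\Gamma^*$ approach $[M:N]$; these lift via the Bratteli embedding to a sequence of projections $p_n \in M'\cap M_{k_n}$ realizing a Følner condition for the action of $M$ on $C^*(M,e_N,JMJ)$. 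The hyperfiniteness of $N\subset M$ then lets us average against this sequence to produce a trace on $C^*(M,e_N,JMJ)$ that is invariant under the $M$-$M^{op}$ bimodule structure, i.e.\ a trace on the symmetric enveloping algebra. By Popa's main theorem, existence of such a hypertrace is precisely amenability of $N\subset M$. The delicate part is showing that the combinatorial Følner condition on $\Gamma$ actually yields an operator-algebraic Følner set in $M'\cap M_k \subset B(L^2(M))$ compatible with both the left and right $M$-actions; this uses extremality (so the $tr$-weights split as products of even and odd dimensions) and the explicit description of $\Gamma\Gamma^*$-eigenvectors in terms of central projections in $M'\cap M_{2n}$.
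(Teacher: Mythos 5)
This theorem is not proved in the paper at all: it is quoted from Popa (\cite{Pop94}, see also \cite{Pop94b}) and used later as an input, so there is no in-paper argument to compare yours against; the only meaningful benchmark is Popa's own proof. Measured against that, your outline has the right general architecture. One small caution on the easy inequality: for an infinite principal graph, the strictly positive trace-weight vector $\vec{s}$ with $\Gamma\Gamma^*\vec{s}=[M:N]\vec{s}$ does \emph{not} make $[M:N]$ ``the Perron eigenvalue'' (the norm of an infinite graph can be strictly smaller than the eigenvalue attached to a positive eigenvector, and need not be an eigenvalue at all); what the positive eigenvector gives, via a Schur-test argument, is exactly the bound $\norm{\Gamma}^2\leq[M:N]$, which is all you need, and extremality is what guarantees the eigenvector identity in the first place.

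The genuine gap is the hard direction. The sentence in which you ``lift via the Bratteli embedding'' approximate eigenvectors of $\Gamma\Gamma^*$ to projections $p_n\in M'\cap M_{k_n}$ realizing a F{\o}lner condition for $C^*(M,e_N,JMJ)$ is not a routine step: it is essentially the content of Popa's theorem. Approximate eigenvectors are combinatorial data on the vertices of $\Gamma$, i.e.\ central elements of the higher relative commutants with good Rayleigh quotients; converting them into elements that are approximately invariant under \emph{both} the left and right $M$-actions (equivalently, producing a hypertrace for $M\vee M^{op}$ inside the symmetric enveloping inclusion) is where Popa's local quantization and intertwining arguments, the use of extremality to control trace weights, and the hyperfiniteness hypothesis all enter irreducibly --- the Kesten condition alone does not imply amenability without hyperfiniteness of $M$. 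Moreover, since in your forward direction you take the hypertrace/F{\o}lner characterization as the definition of amenability, deferring the converse lifting to ``the delicate part'' leaves the proposal either circular or incomplete: the entire difficulty of the theorem is concentrated precisely in the step you have only asserted. To turn the sketch into a proof you would need to reproduce that analysis from \cite{Pop94} and \cite{Pop94b}, not merely cite its conclusion.
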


There are several equivalent notions of amenability for subfactors or their standard invariants due to Popa (see \cite{Pop94},\cite{Pop94b}).

\begin{thm}\cite{Pop99}
There is a unique tracial state, $\tau$, on $C^*(M,e_N,JMJ)$ and the corresponding trace ideal, $\mathcal I_\tau$, is the unique maximal ideal of $C^*(M,e_N,JMJ)$. Furthermore, $N \subset M$ is amenable iff $C^*(M,e_N,JMJ)$ is simple.
\end{thm}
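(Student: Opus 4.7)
The plan is to exploit the structural properties of $C^*(M, e_N, JMJ)$: the factors $M$ and $JMJ$ commute in $B(L^2(M))$, and the Jones projection satisfies $xe_Ny = E_N(x)e_N E_N(y)$ for $x,y\in M$ together with the Pimsner--Popa identity $\sum_i \eta_i e_N \eta_i^* = 1$ for a basis $\{\eta_i\}$ of $M$ over $N$. For existence of a tracial state $\tau$, I would first define a product trace $\tau_0$ on $M\vee JMJ$ via $\tau_0(x\cdot JyJ) = \mathrm{tr}_M(x)\,\mathrm{tr}_M(y^*)$, then extend to $C^*(M, e_N, JMJ)$ using the Jones relations and $\tau(e_N) = [M:N]^{-1}$; positivity and cyclicity on the dense $*$-subalgebra follow from the basis identity, and norm continuity finishes the definition on the $C^*$-closure.

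For uniqueness, any tracial state $\tau'$ restricts to a trace on $M\vee JMJ$ that must equal $\tau_0$ because $M$ and $JMJ\cong M^{op}$ are $II_1$ factors (each with a unique normalized trace) that commute. Applying $\tau'$ to $\sum_i \eta_i e_N \eta_i^* = 1$ and using the trace property forces $\tau'(e_N) = [M:N]^{-1}$, after which density and trace cyclicity determine $\tau'$ on the dense $*$-subalgebra, hence $\tau' = \tau$. To see $\mathcal I_\tau$ is the unique maximal closed two-sided ideal, I would first show any proper closed two-sided ideal $I$ satisfies $I\cap M = I\cap JMJ = 0$ because $M$ and $JMJ$ are factors; the quotient algebra then embeds $M$ faithfully and carries a tracial state built from the unique trace on $M$ together with averaging against the basis $\{\eta_i\}$, which pulls back to a trace on $C^*(M, e_N, JMJ)$ vanishing on $I$. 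By uniqueness, this pullback equals $\tau$, forcing $I\subseteq \mathcal I_\tau$.

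Finally, the equivalence ``$N\subset M$ amenable iff $C^*(M, e_N, JMJ)$ is simple'' is the main obstacle. I would use Popa's criterion $\|\Gamma\|^2 = [M:N]$ and work with a self-adjoint element $T\in C^*(M, e_N, JMJ)$ symmetrically built from $e_N$ and its $M$-, $JMJ$-translates (for instance $T = \sum_i \eta_i e_N J\eta_i^*J$, suitably normalized), whose spectral radius in $\pi_\tau$ is always pinned down by the trace while its spectral radius in the identity representation on $L^2(M)$ detects $\|\Gamma\|^2/[M:N]$. Amenability forces the two spectral radii to coincide, so by a Kesten-type criterion the identity representation is weakly contained in $\pi_\tau$, making $\tau$ faithful and giving $\mathcal I_\tau = 0$, hence simplicity via the previous paragraph. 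Conversely, non-amenability produces a strict gap in spectral radii, yielding nonzero elements of $\mathcal I_\tau$ and a proper ideal. The delicate point is constructing $T$ so that its spectrum in the standard representation genuinely encodes the principal graph norm, which ultimately rests on Popa's analysis of the symmetric enveloping algebra.
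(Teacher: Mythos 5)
First, note that the paper does not prove this statement at all: it is quoted verbatim from Popa \cite{Pop99} and used as a black box, so there is no internal proof to compare against; your proposal has to stand on its own as a proof of Popa's theorem, and as such it has genuine gaps at exactly the points where the real work lies. The soft parts you identify are fine: two commuting $II_1$ factors each have a unique tracial state (Dixmier averaging), so any trace on $C^*(M,JMJ)$ is the product trace, and the Pimsner--Popa relation $\sum_i\eta_ie_N\eta_i^*=1$ together with $e_Nxe_N=E_N(x)e_N$ does pin $\tau'(e_N)=[M:N]^{-1}$. But the step ``density and trace cyclicity determine $\tau'$ on the dense $*$-subalgebra'' fails. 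The obstruction is that $e_N$ compressions do not reduce mixed words: $e_N\,xJyJ\,e_N\neq E_N(x)JE_N(y)Je_N$ in general (on $\hat m$ the left side gives $E_N(xE_N(m)y^*)$, the right side $E_N(x)E_N(m)E_N(y^*)$; already $N\subset N\rtimes G$ with $x=u_g$, $y=u_g$ gives a counterexample). Consequently traces of long alternating words in $M$, $JMJ$, $e_N$ are not formally determined by $\tau'|_{C^*(M,JMJ)}$ and $\tau'(e_N)$; indeed the whole point of the present paper is that such values carry the standard invariant ($\tau(\Theta_u^n)=\dim(N'\cap M_{n-1})/Q^{n+1}$), so uniqueness requires Popa's structural analysis of the symmetric enveloping algebra (normal forms through the higher relative commutants and a Markov-type rigidity), not cyclicity alone. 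The same objection hits existence: a tracial functional prescribed on a dense $*$-subalgebra is not positive or bounded ``by norm continuity''; positivity is obtained by realizing $\tau$ concretely via the trace of the symmetric enveloping $II_1$ factor, which is the nontrivial construction you are implicitly assuming.

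Two further points. For the unique maximal ideal you assert that the quotient by any proper closed ideal ``carries a tracial state built from the unique trace on $M$ together with averaging against the basis''; this is not automatic (a unital $C^*$-algebra need not admit any trace), and producing a trace on the quotient is essentially equivalent to the statement being proved, so as written this is circular rather than a proof. For amenability $\Leftrightarrow$ simplicity, your Kesten-type plan (compare spectral radii of a transfer operator in the standard representation and in $\pi_\tau$, deduce weak containment, hence faithfulness of $\tau$ and $\mathcal I_\tau=0$) is the right heuristic and is close in spirit to how the present paper uses $\Theta_u$ versus $\Gamma\Gamma^*$; but your candidate $T=\sum_i\eta_ie_NJ\eta_i^*J$ is basis-dependent and not obviously self-adjoint, the norm criterion $\norm{\Gamma}^2=[M:N]$ characterizes amenability only under the hyperfiniteness/extremality hypotheses, and you yourself defer the decisive step (that the spectrum of $T$ on $L^2(M)$ sees $\norm{\Gamma}^2$ while $\pi_\tau$ sees $[M:N]$) to ``Popa's analysis.'' So the proposal is a reasonable reading guide to \cite{Pop99}, but it does not constitute a proof of any of the three assertions.
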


We will first define a collection of operators in $C^*(M,e_N,JMJ)$ using the planar algebra formalism. Then we will use this formalism to compute the value of $\tau$ on these operators. These operators will be defined on the dense subspace of $L^2(M)$ given by $\bigcup_k P_{k,+}^{Spin}$ then extended uniquely by continuity. Vectors $\xi \in \bigcup_k P_{k,+}^{Spin}$ will be arranged with their marked intervals on the right and strings at the left. Then the actions of $x\in P_{n,+}^{Spin} \subset M$, $Jy^*J \in JP_{n,+}^{Spin}J \subset JMJ$, and $e_N$ on $\xi \in P_{n+k,+}^{Spin}$ are given by
$$	x\xi= \begin{tikzpicture}[align1,scale=.7]
		\clip[rounded corners] (-.5,-1.25) rectangle (2.2,1.25);
		\draw[line width=1mm] (-.5,.6)--(1.5,.6);
		\draw[line width=1mm] (-.5,.2)--(1.5,.2);
		\draw[line width=1mm] (-.5,-.2)--(1.5,-.2);
		\draw[line width=1mm] (-.5,-.6)--(1.5,-.6);
		\node at (1.5,0)[Tbox, minimum width=.5, minimum height=1.2cm]{\scriptsize $\xi$};
		\node at (0,.6)[Tbox, minimum width=.2, minimum height=.2]{\scriptsize $x$};
		\node at (-.35,1.05) {\scriptsize $\$$};
		\node at (1.9,0) {\scriptsize $\$$};
		\node at (0,1) {\scriptsize $\$$};
		\node at (.7,.6)[Tcirc, inner sep=0.05mm] {\scriptsize $n$};
		\node at (.7,.2)[Tcirc, inner sep=0.05mm] {\scriptsize $k$};
		\node at (.7,-.2)[Tcirc, inner sep=0.05mm] {\scriptsize $k$};
		\node at (.7,-.6)[Tcirc, inner sep=0.05mm] {\scriptsize $n$};
		\draw[very thick,rounded corners] (-.5,-1.25) rectangle (2.2,1.25);
	\end{tikzpicture}
\quad \quad \quad
Jy^*J\xi= \begin{tikzpicture}[align1,scale=.7]
		\clip[rounded corners] (-.5,-1.25) rectangle (2.2,1.25);
		\draw[line width=1mm] (-.5,.6)--(1.5,.6);
		\draw[line width=1mm] (-.5,.2)--(1.5,.2);
		\draw[line width=1mm] (-.5,-.2)--(1.5,-.2);
		\draw[line width=1mm] (-.5,-.6)--(1.5,-.6);
		\node at (1.5,0)[Tbox, minimum width=.5, minimum height=1.2cm]{\scriptsize $\xi$};
		\node at (0,-.6)[Tbox, minimum width=.2, minimum height=.2]{\scriptsize $y$};
		\node at (-.35,1.05) {\scriptsize $\$$};
		\node at (1.9,0) {\scriptsize $\$$};
		\node at (0,-1.05) {\scriptsize $\$$};
		\node at (.7,.6)[Tcirc, inner sep=0.05mm] {\scriptsize $n$};
		\node at (.7,.2)[Tcirc, inner sep=0.05mm] {\scriptsize $k$};
		\node at (.7,-.2)[Tcirc, inner sep=0.05mm] {\scriptsize $k$};
		\node at (.7,-.6)[Tcirc, inner sep=0.05mm] {\scriptsize $n$};
		\draw[very thick,rounded corners] (-.5,-1.25) rectangle (2.2,1.25);
	\end{tikzpicture}
\quad \quad \quad
e_N \xi =\dfrac{1}{\sqrt{Q}} \begin{tikzpicture}[align1,scale=.7]
		\clip[rounded corners] (-.5,-1.1) rectangle (1.7,1.1);
		\draw[line width=1mm] (-.5,0)--(1,0);
		\draw (-.5,.5) to [out=0, in=90] (0,.05)--(0,-.05) to [out=-90,in=0] (-.5,-.5);
		\draw (.8,.5) to [out=180, in=90] (.3,.05)--(.3,-.05) to [out=-90,in=180] (.8,-.5);
		\draw[->] (-.31,-.465)--(-.311,-.4655);
		\draw[->] (.52,.425)--(.521,.4255);
		\node at (1,0)[Tbox, minimum width=.5, minimum height=1cm]{\scriptsize $\xi$};
		\node at (-.35,.9) {\scriptsize $\$$};
		\node at (1.4,0) {\scriptsize $\$$};
		\draw[very thick,rounded corners] (-.5,-1.1) rectangle (1.7,1.1);
	\end{tikzpicture}.$$

\begin{defn}\normalfont
Fix $n$, let $x,y \in P_{n,+}^{Spin}$ and define the linear operator $\pi_{x,y} \colon \bigcup_k P_{k,+}^{Spin} \to \bigcup_k P_{k,+}^{Spin}$

$$\pi_{x,y}\xi =
\begin{tikzpicture}[align1]
\clip[rounded corners] (1.2,-1.2) rectangle (3.1,1.2);

\draw[line width=1mm] (1.8,.6)--(1.8,-.6);
\draw[line width=1mm] (0,0)--(2.6,0);

\draw (1.2,.4) to [out=0, in=90] (1.6,.05)--(1.6,-.05) to [out=-90,in=0] (1.2,-.4);
\draw (2.4,.4) to [out=180, in=90] (2,.05)--(2,-.05) to [out=-90,in=180] (2.4,-.4);

\draw[->] (1.68,.15)--(1.68,.35);
\draw[->] (1.39,-.355)--(1.389,-.3555);
\draw[->] (2.21,.355)--(2.211,.3555);

\node at (2.95,0) {\scriptsize $\$$};
\node at (1.3,1.05) {\scriptsize $\$$};
\node at (1.8,1) {\scriptsize $\$$};
\node at (1.8,-1) {\scriptsize $\$$};
\node at (2.6,0)[Tbox, minimum width=.5cm, minimum height=1.3cm]{\scriptsize $\xi$};
\node at (1.8,.65)[Tbox, minimum height=.5cm, minimum width=.5cm]{\scriptsize $x$};
\node at (1.8,-.65)[Tbox, minimum height=.5cm, minimum width=.5cm]{\scriptsize $y^*$};
\node at (1.8,-.225)[Tcirc, inner sep=0cm]{\tiny $2n$};
\node at (1.39,0)[Tcirc, inner sep=0cm]{\tiny $2k$};
\draw[rounded corners, very thick] (1.2,-1.2) rectangle (3.1,1.2);
\end{tikzpicture} \quad \text{ for } \xi \in P^{Spin}_{k+1,+}.$$
This is well defined since $\pi_{x,y}$ commutes with the inclusion maps $i_{k}(\xi)=$
\begin{tikzpicture}[align.7]
\clip[rounded corners] (.1,-.75) rectangle (1.8,.75);
\draw[line width=1mm] (.1,.3)--(1.25,.3)--(1.25,-.3)--(.1,-.3);
\draw (.1,.1)--(.3,.1) arc(90:-90:.1cm)--(.1,-.1);
\node at (1.2,0)[Tbox, minimum height=.7cm]{\scriptsize $\xi$};
\node at (1.6,0) {\scriptsize $\$$};
\node at (1.6,.55) {\scriptsize $\$$};
\node at (.6,.3)[Tcirc, inner sep=0]{\scriptsize $k$};
\node at (.6,-.3)[Tcirc, inner sep=0]{\scriptsize $k$};
\draw[very thick, rounded corners] (.1,-.75) rectangle (1.8,.75);
\end{tikzpicture} by type $II$ Reidemeister moves.
\end{defn}

\begin{prop}\label{product}
The linear operators $\pi_{x,y}$ for $x,y \in P_{n,+}^{Spin}$ extend uniquely to bounded operators on $L^2(M)$ also denoted by $\pi_{x,y}$. Furthermore, these bounded operators belong to $C^*(M,e_N,JMJ)$.
\end{prop}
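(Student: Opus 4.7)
The plan is to express $\pi_{x,y}$ explicitly as a finite product of operators drawn from the three generating sets of $C^*(M, e_N, JMJ)$, namely $M$ (acting by left multiplication), $JMJ$, and the projection $e_N$. Once such a decomposition is in hand, membership in $C^*(M, e_N, JMJ)$ is immediate, and boundedness is automatic since $C^*(M, e_N, JMJ) \subset B(L^2(M))$.

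By bilinearity of $(x,y) \mapsto \pi_{x,y}$, I may assume $x$ and $y$ are simple tensors $\hat{s}_1 \otimes \cdots \otimes \hat{s}_n$ and $\hat{t}_1 \otimes \cdots \otimes \hat{t}_n$ in $B^{\otimes n}$. Inspecting the planar tangle defining $\pi_{x,y}$, I would identify three ingredients: the input disc labelled $x$ sitting on top (which is exactly left multiplication by $x \in P_{n,+}^{Spin} \subset M$), the input disc labelled $y^*$ sitting on the bottom (which is exactly the action of $Jy^*J \in JMJ$), and the cap/cup structure connecting their $2n$ strands through $\xi$. Comparing this cap/cup structure with the tangle for $e_N$ in Proposition~\ref{cond exp}, and applying type~II Reidemeister moves together with the loop-parameter normalization $\sqrt{Q}$, I would recognize it as a product of $n$ Jones projections: the outermost one is $e_N$, and the remaining $n-1$ come from the Jones tower $P_{k,+}^{Spin} \subset P_{k+1,+}^{Spin}$ (and therefore lie inside $\bigcup_k P_{k,+}^{Spin} \subset M$). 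Tracking all loop factors, I expect a decomposition of the form
\[
   \pi_{x,y} \;=\; \sqrt{Q}^{\,n}\cdot x \cdot \bigl(e_N\cdot f_1 \cdots f_{n-1}\bigr)\cdot Jy^*J,
\]
where each $f_i$ is an appropriate Jones projection of the spin planar algebra tower read off the diagram. Every factor on the right belongs to $C^*(M, e_N, JMJ)$, so $\pi_{x,y}$ does too.

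The main obstacle is the diagrammatic bookkeeping: one must carefully match the cap/cup pattern in $\pi_{x,y}$ with a product of explicit Jones projections and track the scalar $\sqrt{Q}^{\,n}$ arising from the loop parameters, using the cable-cutting lemma and the trace-preserving embedding $\psi_u$ from Proposition~\ref{psi} to rearrange strands when necessary. Once the identity is verified on the dense subspace $\bigcup_k P_{k,+}^{Spin}$ of $L^2(M)$, the bounded operator it defines extends uniquely by continuity and lies in $C^*(M, e_N, JMJ)$ by construction.
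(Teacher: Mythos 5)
There is a genuine gap: you have misread the tangle, and the factorization you propose is false. In the diagram defining $\pi_{x,y}$ the discs labelled $x$ and $y^*$ do \emph{not} sit on the strings of $\xi$ (which is what left multiplication by $x$ and the action of $Jy^*J$ would look like); they sit on a separate band of $2n$ strings joining $x$ to $y^*$, and that band crosses the $2k$ strings of $\xi$ through biunitary crossings, strictly \emph{inside} the two $e_N$-type cap/cup strings. Two consequences kill the proposed word $\sqrt{Q}^{\,n}\,x\,(e_Nf_1\cdots f_{n-1})\,Jy^*J$. First, the tangle begins and ends with the $e_N$ cap/cup, so $\pi_{x,y}=e_N\pi_{x,y}e_N$ and its range lies in $e_NL^2(M)$; compressing your word by $e_N$ replaces $x$ by $E_N(x)$ (and $Jy^*J$ by $JE_N(y^*)J$ on the other side), so equality would force $(x-E_N(x))e_Nf_1\cdots f_{n-1}Jy^*J=0$, which fails for generic basis elements since flatness gives $P^{u}_{1,+}=\mathbb C$, i.e.\ non-scalar simple tensors are not in $N$. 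Second, the middle of the tangle is the band of biunitary crossings, which depends on $u$, whereas the Jones projections $f_i$ of the spin tower do not; no choice of $f_i\in M$ and scalar can encode that data, so the word cannot be repaired in the form $x(\cdots)Jy^*J$.

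The correct decomposition keeps the $M$- and $JMJ$-elements \emph{inside} the $e_N$-sandwich. For an orthonormal basis $\{b_i\}_{i=1}^Q$ of $P^{Spin}_{1,+}$ one reads off the tangle that $\pi_{b_i,b_j}=Q\,e_N\,b_i\,Jb_jJ\,e_N$, so these elementary operators lie in $C^*(M,e_N,JMJ)$; then for $x$ and $y$ obtained by horizontally juxtaposing $b_{i_1},\dots,b_{i_n}$ and $b_{j_1},\dots,b_{j_n}$ (which yields an orthogonal basis of $P^{Spin}_{n,+}$) one has $\pi_{x,y}=Q^{(n-1)/2}\,\pi_{b_{i_1},b_{j_1}}\cdots\pi_{b_{i_n},b_{j_n}}$, a word \emph{alternating} $e_N$ with elements of $M$ and $JMJ$; bilinearity finishes the argument, and boundedness comes with membership in the $C^*$-algebra. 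So your high-level strategy (exhibit $\pi_{x,y}$ as a word in the generators on the dense subspace $\bigcup_kP^{Spin}_{k,+}$ and extend by continuity) is the right one, but the word must have this sandwiched, alternating shape rather than a single $x$ on the left and a single $Jy^*J$ on the right.
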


\begin{proof}
Let $\{b_i\}_{i=1}^Q$ be an orthonormal basis of $P_{1,+}^{Spin}$. Then
$$Qe_Nb_i Jb_jJ e_N \xi = \pi_{b_i,b_j}\xi =
\begin{tikzpicture}[align1]
\clip[rounded corners] (1.2,-1.2) rectangle (3.1,1.2);

\draw (1.7,.6)--(1.7,-.6);
\draw (1.9,.6)--(1.9,-.6);
\draw[line width=1mm] (0,0)--(2.6,0);

\draw (1.2,.4) to [out=0, in=90] (1.6,.05)--(1.6,-.05) to [out=-90,in=0] (1.2,-.4);
\draw (2.4,.4) to [out=180, in=90] (2,.05)--(2,-.05) to [out=-90,in=180] (2.4,-.4);

\draw[->] (1.7,.25)--(1.7,.251);
\draw[->] (1.9,-.25)--(1.9,-.251);
\draw[->] (1.39,-.355)--(1.389,-.3555);
\draw[->] (2.21,.355)--(2.211,.3555);

\node at (2.95,0) {\scriptsize $\$$};
\node at (1.3,1.05) {\scriptsize $\$$};
\node at (1.8,1) {\scriptsize $\$$};
\node at (1.8,-1) {\scriptsize $\$$};
\node at (2.6,0)[Tbox, minimum width=.5cm, minimum height=1.3cm]{\scriptsize $\xi$};
\node at (1.8,.65)[Tbox, minimum height=.5cm, minimum width=.5cm]{\scriptsize $b_i$};
\node at (1.8,-.65)[Tbox, minimum height=.5cm, minimum width=.5cm]{\scriptsize $b_j^*$};
\node at (1.39,0)[Tcirc, inner sep=0cm]{\tiny $2k$};
\draw[rounded corners, very thick] (1.2,-1.2) rectangle (3.1,1.2);
\end{tikzpicture} \quad \text{ for } \xi \in P^{Spin}_{k+1,+}$$
and so $\pi_{b_i,b_j} \in C^*(M,e_N,JMJ)$. Let $i_1,i_2,...,i_n$ and $j_1,j_2,...,j_n$ be indices taking values in $\{1,2,...,Q\}$ then
$$Q^{\frac{n-1}{2}} \pi_{b_{i_1},b_{j_1}}\pi_{b_{i_2},b_{j_2}} \cdots \pi_{b_{i_n},b_{j_n}} \xi = \pi_{x,y} \xi =
\begin{tikzpicture}[align1]
\clip[rounded corners] (1.2,-1.2) rectangle (3.1,1.2);

\draw[line width=1mm] (1.8,.6)--(1.8,-.6);
\draw[line width=1mm] (0,0)--(2.6,0);

\draw (1.2,.4) to [out=0, in=90] (1.6,.05)--(1.6,-.05) to [out=-90,in=0] (1.2,-.4);
\draw (2.4,.4) to [out=180, in=90] (2,.05)--(2,-.05) to [out=-90,in=180] (2.4,-.4);

\draw[->] (1.68,.15)--(1.68,.35);
\draw[->] (1.39,-.355)--(1.389,-.3555);
\draw[->] (2.21,.355)--(2.211,.3555);

\node at (2.95,0) {\scriptsize $\$$};
\node at (1.3,1.05) {\scriptsize $\$$};
\node at (1.8,1) {\scriptsize $\$$};
\node at (1.8,-1) {\scriptsize $\$$};
\node at (2.6,0)[Tbox, minimum width=.5cm, minimum height=1.3cm]{\scriptsize $\xi$};
\node at (1.8,.65)[Tbox, minimum height=.5cm, minimum width=.5cm]{\scriptsize $x$};
\node at (1.8,-.65)[Tbox, minimum height=.5cm, minimum width=.5cm]{\scriptsize $y^*$};
\node at (1.8,-.225)[Tcirc, inner sep=0cm]{\tiny $2n$};
\node at (1.39,0)[Tcirc, inner sep=0cm]{\tiny $2k$};
\draw[rounded corners, very thick] (1.2,-1.2) rectangle (3.1,1.2);
\end{tikzpicture} \quad \text{ for } \xi \in P^{Spin}_{k+1,+}$$
$$\text{where } x=
\begin{tikzpicture}[align1]
\clip[rounded corners] (0,-.5) rectangle (2.8,.6);
\draw[shaded] (.3,0) rectangle (.5,-.5);
\draw[shaded] (1.1,0) rectangle (1.3,-.5);
\draw[shaded] (2.3,0) rectangle (2.5,-.5);
\node at (.4,0)[Tbox, minimum height=.5cm, minimum width=.5cm]{\scriptsize $b_{i_1}$};
\node at (1.2,0)[Tbox, minimum height=.5cm, minimum width=.5cm]{\scriptsize $b_{i_2}$};
\node at (1.8,0) {\scriptsize $\cdots$};
\node at (2.4,0)[Tbox, minimum height=.5cm, minimum width=.5cm]{\scriptsize $b_{i_n}$};
\node at (.1,.45) {\scriptsize $\$$};
\node at (.4,.33) {\scriptsize $\$$};
\node at (1.2,.33) {\scriptsize $\$$};
\node at (2.4,.33) {\scriptsize $\$$};
\draw[rounded corners, very thick] (0,-.5) rectangle (2.8,.6);
\end{tikzpicture}
\text{ and }
y=\begin{tikzpicture}[align1]
\clip[rounded corners] (0,-.5) rectangle (2.8,.6);
\draw[shaded] (.3,0) rectangle (.5,-.5);
\draw[shaded] (1.1,0) rectangle (1.3,-.5);
\draw[shaded] (2.3,0) rectangle (2.5,-.5);
\node at (.4,0)[Tbox, minimum height=.5cm, minimum width=.5cm]{\scriptsize $b_{j_1}$};
\node at (1.2,0)[Tbox, minimum height=.5cm, minimum width=.5cm]{\scriptsize $b_{j_2}$};
\node at (1.8,0) {\scriptsize $\cdots$};
\node at (2.4,0)[Tbox, minimum height=.5cm, minimum width=.5cm]{\scriptsize $b_{j_n}$};
\node at (.1,.45) {\scriptsize $\$$};
\node at (.4,.33) {\scriptsize $\$$};
\node at (1.2,.33) {\scriptsize $\$$};
\node at (2.4,.33) {\scriptsize $\$$};
\draw[rounded corners, very thick] (0,-.5) rectangle (2.8,.6);
\end{tikzpicture}.$$
Since elements of the same form as $x$ for different indices $i_1,...,i_n$ form an orthogonal basis of $P_{n,+}^{Spin}$, we obtain $\pi_{x,y} \in C^*(M,e_N,JMJ)$ for any $x,y \in P_{n,+}^{Spin}$.
\end{proof}

\begin{lem}\label{norm}
Fix $n,l \in \mathbb N$ and let $x,y \in P_{n,+}^{Spin}$. Define the linear operator $\rho_{x,y,l} \colon \bigcup_k P^{Spin}_{k+2l,+} \to \bigcup_k P^{Spin}_{k+2l,+}$
$$\rho_{x,y,l} \xi = 
\begin{tikzpicture}[align1]
\clip[rounded corners] (.9,-1.8) rectangle (3.5,1.8);

\draw (0,1.4) rectangle (3.75,-1.4);
\draw[rounded corners] (1.2,1) rectangle (2.2,-1);

\draw[line width=1mm] (1.8,.65)--(1.8,-.65);
\draw[line width=1mm] (0,0)--(4,0);
\draw[line width=1mm] (0,1.2)--(3.75,1.2);
\draw[line width=1mm] (0,-1.2)--(3.75,-1.2);

\draw (0,1.4) rectangle (3.75,-1.4);

\path[shadedw] (3,2) rectangle (4.5,-2);

\draw[->] (1.2,.5)--(1.2,.49);
\draw[->] (1.65,-.3)--(1.65,-.15);

\node at (3.35,0) {\scriptsize $\$$};
\node at (1,1.65) {\scriptsize $\$$};
\node at (1.45,.65) {\scriptsize $\$$};
\node at (1.45,-.65) {\scriptsize $\$$};
\node at (3,0)[Tbox, minimum width=.5cm, minimum height=3.2cm]{\scriptsize $\xi$};
\node at (1.8,.65)[Tbox, minimum height=.5cm, minimum width=.5cm]{\scriptsize $x$};
\node at (1.8,-.65)[Tbox, minimum height=.5cm, minimum width=.5cm]{\scriptsize $y^*$};
\node at (1.8,.225)[Tcirc, inner sep=0cm]{\tiny $2n$};
\node at (2.5,0)[Tcirc, inner sep=0cm]{\scriptsize $2k$};
\node at (2.5,1.2)[Tcirc, inner sep=0cm]{\scriptsize $2l$};
\node at (2.5,-1.2)[Tcirc, inner sep=0cm]{\scriptsize $2l$};
\draw[rounded corners, very thick] (.9,-1.8) rectangle (3.5,1.8);
\end{tikzpicture} \quad \text{ for } \xi \in P^{Spin}_{k+2l+1,+}.$$
Then $\rho_{x,y,l}$ extends uniquely to a bounded operator on $L^2(M)$ with
$$\norm{\rho_{x,y,l}}_{B(L^2(M))} \leq \sqrt{Q} \norm{x}_{Spin} \norm{y}_{Spin}.$$
\end{lem}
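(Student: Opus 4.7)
The plan is to mirror the argument of Proposition \ref{product}: first identify $\rho_{x,y,l}$ as living in $C^*(M,e_N,JMJ)$ by reading its tangle as a product of Jones projections, elements of $M$, and elements of $JMJ$, then establish the norm bound by expanding $x$ and $y$ in an orthonormal basis and applying Cauchy--Schwarz.

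For the first step, I would observe that the tangle defining $\rho_{x,y,l}$ is essentially $\pi_{x,y}$ shifted by $l$ levels: the $2l$ strands on the top and bottom pass through undisturbed, while the middle region (containing $x$, $y^*$, and the $2k$ strands going to $\xi$) has the same combinatorial structure as $\pi_{x,y}$. By the same decomposition used in the proof of Proposition \ref{product}, this shifted tangle is realized by a product of the form
$$c \cdot e_{l+1} \cdots e_{l+n} \cdot \iota_l(x) \cdot J\iota_l(y)J \cdot e_{l+n} \cdots e_{l+1},$$
where $c$ is an appropriate power of $\sqrt{Q}$ and $\iota_l$ denotes the inclusion $P_{n,+}^{Spin}\hookrightarrow P_{n+l,+}^{Spin}\hookrightarrow M$ via the maps $i_k$ from Proposition 1.6. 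This immediately places $\rho_{x,y,l}$ inside $C^*(M,e_N,JMJ)$, and also shows extension by continuity to a bounded operator on $L^2(M)$.

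For the norm bound, I would use the cable cutting lemma to pick an orthonormal basis $\{b_I\}$ of $P_{n,+}^{Spin}$ with respect to $\langle\cdot\mid\cdot\rangle_{Spin}$, then expand $x=\sum_I \langle x\mid b_I\rangle_{Spin} b_I$ and $y=\sum_J \langle y\mid b_J\rangle_{Spin} b_J$. By sesquilinearity of the tangle in its two labels,
$$\rho_{x,y,l}=\sum_{I,J}\langle x\mid b_I\rangle_{Spin}\overline{\langle y\mid b_J\rangle}_{Spin}\,\rho_{b_I,b_J,l},$$
and Cauchy--Schwarz yields
$$\norm{\rho_{x,y,l}}\leq \norm{x}_{Spin}\norm{y}_{Spin}\sqrt{\sum_{I,J}\norm{\rho_{b_I,b_J,l}}^2}.$$
The remaining task is a uniform bound $\sum_{I,J}\norm{\rho_{b_I,b_J,l}}^2\leq Q$. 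For simple tensor basis elements $b_I=\hat s_1\otimes\cdots\otimes\hat s_n$, the operator $\rho_{b_I,b_J,l}$ acts as a ``transfer'' between basis states, and a direct calculation using $\norm{e_j}=1$, $\norm{JzJ}_{B(L^2(M))}=\norm{z}_M$, and the explicit matrix-unit nature of simple tensors yields the required bound.

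The main obstacle is keeping track of the various powers of $\sqrt{Q}$ so that the final constant comes out to exactly $\sqrt{Q}$ and no more. These powers enter from several sources: the $Rot(\sigma)$ normalization in the spin inner product, the loop parameter $\delta_\pm=\sqrt{Q}$ used in each type $II$ Reidemeister simplification, the cable cutting identity (which provides the orthonormal basis in spin normalization rather than trace normalization), and the conversion between $\norm{\cdot}_M$ and $\norm{\cdot}_{Spin}$ for individual basis elements. An alternative approach that sidesteps some of this bookkeeping is to compute $\norm{\rho_{x,y,l}\xi}_{2,tr}^2$ directly as a single closed planar diagram, then apply the cable cutting lemma to the $\xi$-$\xi^*$ cable and bound the resulting expression via Cauchy--Schwarz; this trades the basis expansion in $x,y$ for one in $\xi$ but may give cleaner combinatorics.
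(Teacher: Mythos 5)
There is a genuine gap in both halves of your argument. The proposed identity $\rho_{x,y,l}=c\,e_{l+1}\cdots e_{l+n}\,\iota_l(x)\,J\iota_l(y)J\,e_{l+n}\cdots e_{l+1}$ cannot hold: the right-hand side is built only from the tower Jones projections of the basic construction $\mathbb C\subset P_{1,+}^{Spin}\subset P_{2,+}^{Spin}\subset\cdots$ together with left and right multiplications, so it does not depend on the biunitary $u$ at all, whereas the tangle defining $\rho_{x,y,l}$ contains oriented crossings (the closed loop around the $x$--$y^*$ column and the $2n$ connecting strands both cross the $2k$-cable) that are resolved by $u$. The $u$-dependence of $\pi_{x,y}$ and $\rho_{x,y,l}$ enters through $e_N$, i.e.\ through $\psi_u$, not through the $e_j$'s; indeed Proposition \ref{tau approximation} produces $\rho_{\theta_{u^*}^l(x),\theta_{u^*}^l(y),l}$, a manifestly $u$-dependent operator. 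Even for $\pi_{x,y}$ there is no single sandwich formula when $n\geq 2$: Proposition \ref{product} has to write $\pi_{x,y}$ as products $\pi_{b_{i_1},b_{j_1}}\cdots\pi_{b_{i_n},b_{j_n}}$ over tensor-factorized basis elements, with copies of $e_N$ interleaved. (Membership in $C^*(M,e_N,JMJ)$ is in any case not what the lemma asserts, so this step would not advance the proof even if repaired.)

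The more serious gap is the unproven estimate $\sum_{I,J}\norm{\rho_{b_I,b_J,l}}^2\leq Q$, which carries the entire content of the lemma and is in fact stronger than it: over the $Q^{2n}$ basis pairs it demands per-pair norms of order $Q^{\frac12-n}$, uniformly in $k$ and $l$, and no mechanism for such decay is given. The ingredients you invoke ($\norm{e_j}=1$, $\norm{JzJ}_{B(L^2(M))}=\norm{z}_M$, the matrix-unit nature of simple tensors) only yield per-term bounds of the order of a fixed power of $\sqrt Q$; feeding those into your triangle-inequality/Cauchy--Schwarz step gives at best $\norm{\rho_{x,y,l}}\leq Q^{n+\frac12}\norm{x}_{Spin}\norm{y}_{Spin}$, an $n$-dependent bound that is useless for Proposition \ref{tau approximation}, where uniformity in $n$ is exactly the point. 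Expanding $x$ and $y$ in a basis and then using the triangle inequality discards the cancellations the lemma depends on. The paper avoids this by estimating the matrix coefficient $\inn{\rho_{x,y,l}\xi}{\eta}$ directly: it inserts a tangle built from biunitary crossings that is unitary by the type $II$ Reidemeister moves, applies Cauchy--Schwarz once to split the closed diagram into a factor containing only $x$ and $\xi$ and a factor containing only $y$ and $\eta$, and evaluates each factor, which is where the dimension-free constant $\sqrt Q$ comes from. Your closing alternative (computing the matrix coefficient as a single closed diagram and using cable cutting plus Cauchy--Schwarz) is essentially this argument and is the route you should carry out in detail; as written it remains a sketch.
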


\begin{proof}
We will verify this inequality by obtaining an upper bound of $\left| \inn{\rho_{x,y,l}\xi}{\eta}_{tr_M} \right|$ for $\xi$ and $\eta$ in a dense subset of $L^2(M)$. Let $\xi, \eta \in P^{Spin}_{k+2l+1,+}$, then due to the Cauchy-Schwarz inequality and unitarity of the first diagram below

$$\left| \inn{\rho_{x,y,l}\xi}{\eta}_{Spin} \right| = 
\left|
\,\,\,
\begin{tikzpicture}[align1]
\draw (0,-1.8)--(-.2,0)--(0,1.8);
\end{tikzpicture}
\,\,\,\,
\begin{tikzpicture}[align1]
\clip[rounded corners] (.9,-1.8) rectangle (2.8,1.8);

\draw (0,1.4) rectangle (3.75,-1.4);
\draw (2.8,.3) to [out=180,in=90] (2.4,.05)--(2.4,-.8) to [out=-90,in=0] (2.2,-1)--(.9,-1)--(.9,-.3) to [out=0,in=-90] (1.2,-.05)--(1.2,.8) to [out=90,in=180] (1.4,1)--(2.8,1)--(2.8,.3)--cycle;

\draw[line width=1mm] (3,.65) to [out=180,in=90] (1.8,.05)--(1.8,-.05) to [out=-90,in=0] (.9,-.65);
\draw[line width=1mm] (0,0)--(4,0);
\draw[line width=1mm] (0,1.2)--(3.75,1.2);
\draw[line width=1mm] (0,-1.2)--(3.75,-1.2);

\draw (2.8,.3) to [out=180,in=90] (2.4,.05)--(2.4,-.8) to [out=-90,in=0] (2.2,-1)--(.9,-1)--(.9,-.3) to [out=0,in=-90] (1.2,-.05)--(1.2,.8) to [out=90,in=180] (1.4,1)--(2.8,1)--(2.8,.3)--cycle;

\draw[->] (1.2,-.45) to [out=0,in=220] (1.5,-.3);
\draw[->] (1.2,.55)--(1.2,.54);
\draw[->] (2.4,-.55)--(2.4,-.54);

\node at (1,1.65) {\scriptsize $\$$};
\node at (2,.45)[Tcirc, inner sep=0cm]{\scriptsize $2n$};
\node at (2.15,0)[Tcirc, inner sep=0cm]{\scriptsize $2k$};
\node at (2.5,1.2)[Tcirc, inner sep=0cm]{\scriptsize $2l$};
\node at (2.5,-1.2)[Tcirc, inner sep=0cm]{\scriptsize $2l$};
\draw[rounded corners, very thick] (.9,-1.8) rectangle (2.8,1.8);
\end{tikzpicture}
\,\,\,\,
\begin{tikzpicture}[align1]
\clip[rounded corners] (.9,-1.8) rectangle (3.5,1.8);

\draw (0,1.4) rectangle (3.75,-1.4);
\draw[rounded corners] (0,1) rectangle (2.4,.3);

\draw[line width=1mm] (1.8,.65)--(0,.65);
\draw[line width=1mm] (0,0)--(4,0);
\draw[line width=1mm] (0,1.2)--(3.75,1.2);
\draw[line width=1mm] (0,-1.2)--(3.75,-1.2);

\draw[rounded corners] (0,1) rectangle (2.4,.3);

\path[shadedw] (3,2) rectangle (4.5,-2);

\node at (3.35,0) {\scriptsize $\$$};
\node at (1,1.65) {\scriptsize $\$$};
\node at (2.15,.65) {\scriptsize $\$$};
\node at (3,0)[Tbox, minimum width=.5cm, minimum height=3.2cm]{\scriptsize $\xi$};
\node at (1.8,.65)[Tbox, minimum height=.5cm, minimum width=.5cm]{\scriptsize $x$};
\node at (1.25,.65)[Tcirc, inner sep=0cm]{\scriptsize $2n$};
\node at (2.5,0)[Tcirc, inner sep=0cm]{\scriptsize $2k$};
\node at (2.5,1.2)[Tcirc, inner sep=0cm]{\scriptsize $2l$};
\node at (2.5,-1.2)[Tcirc, inner sep=0cm]{\scriptsize $2l$};
\draw[rounded corners, very thick] (.9,-1.8) rectangle (3.5,1.8);
\end{tikzpicture}
\,\,\,\,
\begin{tikzpicture}[align1]
\draw (0,-1.8)--(0,1.8);
\end{tikzpicture}
\,\,\,\,
\begin{tikzpicture}[align1]
\clip[rounded corners] (.9,-1.8) rectangle (3.5,1.8);

\draw (0,1.4) rectangle (3.75,-1.4);
\draw[rounded corners] (0,-1) rectangle (2.4,-.3);

\draw[line width=1mm] (1.8,-.65)--(0,-.65);
\draw[line width=1mm] (0,0)--(4,0);
\draw[line width=1mm] (0,1.2)--(3.75,1.2);
\draw[line width=1mm] (0,-1.2)--(3.75,-1.2);

\draw[rounded corners] (0,-1) rectangle (2.4,-.3);

\path[shadedw] (3,2) rectangle (4.5,-2);

\node at (3.35,0) {\scriptsize $\$$};
\node at (1,1.65) {\scriptsize $\$$};
\node at (2.15,-.65) {\scriptsize $\$$};
\node at (3,0)[Tbox, minimum width=.5cm, minimum height=3.2cm]{\scriptsize $\eta$};
\node at (1.8,-.65)[Tbox, minimum height=.5cm, minimum width=.5cm]{\scriptsize $y$};
\node at (1.25,-.65)[Tcirc, inner sep=0cm]{\scriptsize $2n$};
\node at (2.5,0)[Tcirc, inner sep=0cm]{\scriptsize $2k$};
\node at (2.5,1.2)[Tcirc, inner sep=0cm]{\scriptsize $2l$};
\node at (2.5,-1.2)[Tcirc, inner sep=0cm]{\scriptsize $2l$};
\draw[rounded corners, very thick] (.9,-1.8) rectangle (3.5,1.8);
\end{tikzpicture} 
\,\,\,\,
\begin{tikzpicture}[align1]
\draw (0,-1.8)--(.2,0)--(0,1.8);
\end{tikzpicture}
\,\,\,\,
\right|$$
$$\leq
\,\,\,\,
\begin{tikzpicture}[align1]
\begin{scope}[shift=(-90:.5)]
\clip[rounded corners] (0,-.8) rectangle (3.5,1.8);

\draw (.5,1.4) rectangle (3,-.4);
\draw[rounded corners] (.85,1) rectangle (2.65,.3);

\draw[line width=1mm] (1.35,.65)--(2.15,.65);
\draw[line width=1mm] (.5,.1)--(3,.1);
\draw[line width=1mm] (.5,1.2)--(3,1.2);
\draw[line width=1mm] (.5,-.2)--(3,-.2);

\draw (.5,1.4) rectangle (3,-.4);
\draw[rounded corners] (.85,1) rectangle (2.65,.3);

\node at (3.35,.5) {\scriptsize $\$$};
\node at (.15,.5) {\scriptsize $\$$};
\node at (.1,1.65) {\scriptsize $\$$};
\node at (.95,.65) {\scriptsize $\$$};
\node at (2.55,.65) {\scriptsize $\$$};
\node at (3,.5)[Tbox, minimum width=.5cm, minimum height=2.2cm]{\scriptsize $\xi$};
\node at (.5,.5)[Tbox, minimum width=.5cm, minimum height=2.2cm]{\scriptsize $\xi^*$};
\node at (2.2,.65)[Tbox, minimum height=.5cm, minimum width=.5cm]{\scriptsize $x$};
\node at (1.3,.65)[Tbox, minimum height=.5cm, minimum width=.5cm]{\scriptsize $x^*$};
\node at (1.75,.65)[Tcirc, inner sep=0cm]{\tiny $2n$};
\node at (1.55,.1)[Tcirc, inner sep=0cm]{\scriptsize $2k$};
\node at (1.75,1.2)[Tcirc, inner sep=0cm]{\scriptsize $2l$};
\node at (1.75,-.2)[Tcirc, inner sep=0cm]{\scriptsize $2l$};
\draw[rounded corners, very thick] (0,-.8) rectangle (3.5,1.8);
\end{scope}
\end{tikzpicture}^{1/2}
\cdot
\begin{tikzpicture}[align1]
\begin{scope}[shift=(90:.5)]
\clip[rounded corners] (0,-1.8) rectangle (3.5,.8);

\draw (.5,.4) rectangle (3,-1.4);
\draw[rounded corners] (.85,-1) rectangle (2.65,-.3);

\draw[line width=1mm] (1.35,-.65)--(2.15,-.65);
\draw[line width=1mm] (.5,-.1)--(3,-.1);
\draw[line width=1mm] (.5,.2)--(3,.2);
\draw[line width=1mm] (.5,-1.2)--(3,-1.2);

\draw[rounded corners] (.85,-1) rectangle (2.65,-.3);

\node at (3.35,-.5) {\scriptsize $\$$};
\node at (.15,-.5) {\scriptsize $\$$};
\node at (.1,.65) {\scriptsize $\$$};
\node at (.95,-.65) {\scriptsize $\$$};
\node at (2.55,-.65) {\scriptsize $\$$};
\node at (3,-.5)[Tbox, minimum width=.5cm, minimum height=2.2cm]{\scriptsize $\eta$};
\node at (.5,-.5)[Tbox, minimum width=.5cm, minimum height=2.2cm]{\scriptsize $\eta^*$};
\node at (2.2,-.65)[Tbox, minimum height=.5cm, minimum width=.5cm]{\scriptsize $y$};
\node at (1.3,-.65)[Tbox, minimum height=.5cm, minimum width=.5cm]{\scriptsize $y^*$};
\node at (1.75,-.65)[Tcirc, inner sep=0cm]{\tiny $2n$};
\node at (1.55,-.1)[Tcirc, inner sep=0cm]{\scriptsize $2k$};
\node at (1.75,.2)[Tcirc, inner sep=0cm]{\scriptsize $2l$};
\node at (1.75,-1.2)[Tcirc, inner sep=0cm]{\scriptsize $2l$};
\draw[rounded corners, very thick] (0,-1.8) rectangle (3.5,.8);
\end{scope}
\end{tikzpicture}^{1/2}
\leq 
\sqrt{Q} \norm{x}_{Spin} \norm{y}_{Spin} \norm{\xi}_{Spin} \norm{\eta}_{Spin}.$$
Since the tracial inner product is a normalization of $\inn{\cdot}{\cdot}_{Spin}$, we have
$$\left| \inn{\rho_{x,y,l}\xi}{\eta}_{tr_M} \right| \leq \norm{x}_{Spin} \norm{y}_{Spin} \norm{\xi}_{tr_M}\norm{\eta}_{tr_M}$$
for all $\xi,\eta$ in a dense subset of $L^2(M)$. Therefore $\rho_{x,y,l}$ extends uniquely to a bounded operator with $\norm{\rho_{x,y,l}}_{B(L^2(M))} \leq \sqrt{Q} \norm{x}_{Spin} \norm{y}_{Spin}$.
\end{proof}

\begin{lem}
Let $\{b_i\}_{i=1}^{Q^n}$ be an orthonormal basis of $P^{Spin}_{n,+}$ with respect to $\inn{\cdot}{\cdot}_{Spin}$, then $\displaystyle\sum_{i=1}^{Q^n} b_i b_i^* = \sqrt{Q}^n \cdot id_{P^{Spin}_{n,+}}.$
\end{lem}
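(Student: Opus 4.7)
The strategy is to derive the identity directly from the cable cutting lemma (Lemma~\ref{cabling}) by post-composing both sides with a suitable planar tangle that pairs off half of the strands. The cable cutting lemma, read in $P_{2n,+}^{Spin}$, equates the identity $2n$-strand tangle with $\sum_{i}$ of the tangle with $b_{i}$ stacked over $b_{i}^{*}$ (no internal connection). Consider the planar tangle $T : P_{2n,+}^{Spin} \to P_{n,+}^{Spin}$ which closes the leftmost $n$ of the $2n$ top boundary points to the leftmost $n$ of the $2n$ bottom boundary points via exterior arcs, leaving the remaining $n$ top and $n$ bottom points as the $2n$ boundary points of the output disc.

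Apply $T$ to both sides of cable cutting. On the left-hand side, each of the $n$ leftmost vertical strands of the identity tangle combines with its corresponding exterior arc to form a closed loop, and since $\delta_{+} = \delta_{-} = \sqrt{Q}$ in $P^{Spin}$, every such loop evaluates to $\sqrt{Q}$. The remaining $n$ vertical strands are untouched and form the identity tangle of $P_{n,+}^{Spin}$. Hence $T$ applied to the LHS equals $\sqrt{Q}^{n}\cdot \mathrm{id}_{P_{n,+}^{Spin}}$. On the right-hand side, the exterior arcs connect the leftmost $n$ boundary points of the $b_{i}$-box directly to the leftmost $n$ boundary points of the $b_{i}^{*}$-box, and a planar isotopy rearranges this into the standard multiplication tangle in $P_{n,+}^{Spin}$ with inputs $b_{i}$ and $b_{i}^{*}$. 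Therefore $T$ applied to the RHS equals $\sum_{i} b_{i}b_{i}^{*}$, and equating the two yields the claim.

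The main obstacle is the diagrammatic bookkeeping: verifying that the ``closed-off'' right-hand side tangle is genuinely isotopic, with matching shadings and marked intervals, to the multiplication tangle used to define the $C^{*}$-algebra structure on $P_{n,+}^{Spin}$. This is straightforward in $P^{Spin}$ precisely because the loop parameters $\delta_{+}$ and $\delta_{-}$ agree, so the shading of each closed-off loop is immaterial to its value. As a sanity check, taking the normalized trace of both sides reproduces the identity $\sum_{i} tr(b_{i}b_{i}^{*}) = Q^{n}\cdot \sqrt{Q}^{-n} = \sqrt{Q}^{n} = tr(\sqrt{Q}^{n}\cdot \mathrm{id})$, using that $\inn{b_{i}}{b_{j}}_{Spin} = \sqrt{Q}^{n}\,tr(b_{i}b_{j}^{*}) = \delta_{ij}$.
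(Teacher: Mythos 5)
Your argument is correct and is exactly the paper's intended proof: the paper disposes of this lemma in one line, saying it ``follows from the cable cutting lemma \ref{cabling} and the loop parameters for $P^{Spin}$,'' and your closing-off tangle with the $\delta_{+}=\delta_{-}=\sqrt{Q}$ loop count is precisely the argument being alluded to. The only detail left implicit (which half of the strands to close so the result is the multiplication tangle, with shadings irrelevant since both loop parameters agree) is the bookkeeping you already flag, and your trace sanity check is consistent with the normalization $\inn{\xi}{\eta}_{Spin}=\sqrt{Q}^{\,n}\,tr(\xi\eta^{*})$.
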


This lemma follows from the cable cutting lemma \ref{cabling} and the loop parameters for $P^{Spin}$.

\begin{prop}\label{tau approximation}
Let $\tau$ denote the unique continuous trace on $C^*(M,e_N,JMJ)$ constructed in \cite{Pop99}. Then for $x,y \in P^{Spin}_{n,+}$ and any $l \in \mathbb N$
$$\abs{\tau(\pi_{x,y})} \leq \dfrac{1}{\sqrt{Q}} \norm{\theta_{u^*}^l(x)}_{Spin} \norm{\theta_{u^*}^l(y)}_{Spin}.$$
\end{prop}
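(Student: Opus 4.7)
The plan is to apply Cauchy--Schwarz in $L^2(\tau)$ to an $l$-dependent factorization of $\pi_{x,y}$ inside $C^*(M,e_N,JMJ)$. For each $l$, the factorization is engineered so that the two diagonal moments appearing in the Cauchy--Schwarz bound compute to the Spin inner products of $\theta_{u^*}^l(x)$ and $\theta_{u^*}^l(y)$ with themselves, with a $\tfrac{1}{\sqrt{Q}}$ prefactor coming from Popa's Markov normalization for $\tau$.

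Concretely, for each $l\in\mathbb{N}$ I would construct operators $\alpha_l(x),\beta_l(y)\in C^*(M,e_N,JMJ)$ with $\pi_{x,y} = \alpha_l(x)\beta_l(y)$. For $l=0$, splitting the defining tangle of $\pi_{x,y}$ along its middle horizontal $2k$-cable gives the natural upper-half/lower-half factorization; both halves belong to $C^*(M,e_N,JMJ)$ by the monomial reduction from Proposition \ref{product}. For $l \geq 1$, $\alpha_l(x)$ is obtained by redrawing the upper half with $l$ concentric pairs of biunitary loops nested between $x$ and its cap, and $\beta_l(y)$ carries $l$ complementary loop-pairs around $y^*$ in the lower half. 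Type II Reidemeister moves for $u$, applied $l$ times across the horizontal cable, cancel the upper and lower loops in matched pairs, so that $\alpha_l(x)\beta_l(y) = \pi_{x,y}$ for every $l$; however, each diagonal combination such as $\alpha_l(x)^*\alpha_l(x)$ now features $2l$ biunitary loops wrapped around $x$ and $x^*$, which is precisely the configuration defining the Spin inner product of $\theta_{u^*}^l(x)$ with itself once the $1/Q^l$ normalization of $\theta_{u^*}^l$ is absorbed.

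Cauchy--Schwarz in $L^2(\tau)$ then gives
\[
\abs{\tau(\pi_{x,y})}^2 \;=\; \abs{\tau(\alpha_l(x)\beta_l(y))}^2 \;\leq\; \tau\bigl(\alpha_l(x)^*\alpha_l(x)\bigr)\,\tau\bigl(\beta_l(y)\beta_l(y)^*\bigr).
\]
Each diagonal moment is a closed tangle in $P^{Spin}$; applying Popa's identity $\tau(e_N T) = \tfrac{1}{Q}\,tr_M(T)$ for $T \in M$, its analogue for $JMJ$, and the cable cutting Lemma \ref{cabling}, each evaluates to $\tfrac{1}{\sqrt{Q}}$ times the relevant Spin norm squared of $\theta_{u^*}^l$ applied to the variable. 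Substituting into the inequality above and taking square roots yields the statement.

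The main obstacle is the diagrammatic construction of the $l$-dependent factorization: one must match orientations of the inserted biunitary loop-pairs so that they cancel across the horizontal cable via Reidemeister II while simultaneously producing $\theta_{u^*}^l(x)$ (rather than some other closely related operator) inside each diagonal moment, and must carefully track the $1/Q^l$ in $\theta_{u^*}^l$ against the $Q^l$ produced by the $\sqrt{Q}$ loop parameter of $P^{Spin}$ so that a single $1/\sqrt{Q}$ survives after all cancellations. Once these diagrammatic identities are in place the rest of the argument is a direct application of Cauchy--Schwarz and the planar calculus already developed.
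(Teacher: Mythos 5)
Your plan hinges on two claims that cannot both hold, and this is a genuine gap rather than a fixable detail. If the extra loop-pairs you insert into the two halves really cancel in the product $\alpha_l(x)\beta_l(y)$ by Reidemeister II moves across the cut, then what you have inserted is a unitary: writing $\alpha_l(x)=\alpha_0(x)V_l^*$ and $\beta_l(y)=V_l\beta_0(y)$, the cancellation condition is exactly $V_l^*V_l=1$ (cables of biunitary crossings act as unitaries, and Reidemeister II is precisely $uu^*=1$). But then traciality gives $\tau(\alpha_l(x)^*\alpha_l(x))=\tau(V_l\alpha_0(x)^*\alpha_0(x)V_l^*)=\tau(\alpha_0(x)^*\alpha_0(x))$, so the diagonal moments are independent of $l$ and no power of $\theta_{u^*}$ can appear in them; at best you recover the $l=0$ bound $\frac{1}{\sqrt Q}\norm{x}_{Spin}\norm{y}_{Spin}$, which is weaker than the statement (note $\norm{\theta_{u^*}^l(x)}_{Spin}\leq\norm{x}_{Spin}$, with strict inequality off the flat subspace). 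Conversely, if the loops genuinely wrap the $2n$-cable so as to turn $x$ into (a multiple of) $\theta_{u^*}^l(x)$ inside $\alpha_l$, the insertion is not unitary --- $\theta_{u^*}$ is a non-invertible positive contraction --- and then $\alpha_l(x)\beta_l(y)\neq\pi_{x,y}$, since $\beta_l$ cannot undo it without introducing an inverse whose norm destroys the estimate. So the mechanism by which $\theta_{u^*}^l$ is supposed to enter the Cauchy--Schwarz bound does not exist as described.

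A second, independent problem is the evaluation step: $\tau$ is the abstract (unique) tracial state on $C^*(M,e_N,JMJ)$, not a diagrammatic evaluation, and an element such as $\alpha_l(x)^*\alpha_l(x)$ is, just like $\pi_{x,x}$ itself, an alternating word in $e_N$, $M$ and $JMJ$ that lies outside the subalgebras $\langle M,e_N\rangle$ and $M\vee JMJ$ where identities like $\tau(e_N T)=\frac1Q tr_M(T)$ apply (except in the degenerate case $n=1$, $l=0$). If traces of such elements could be computed by closing the tangle, then $\tau(\pi_{b,b})$ would be directly computable and both this proposition and the limiting argument in the following one would be superfluous; moreover the closed-tangle value at finite $l$ would disagree with the true value, which is only reached in the limit $l\to\infty$ (it is $\frac1{\sqrt Q}\norm{P_{\mathrm{flat}}\,x}_{Spin}^2$ for $x=y$, not $\frac1{\sqrt Q}\norm{\theta_{u^*}^l(x)}_{Spin}^2$). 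The paper sidesteps both issues: it never evaluates $\tau$ on these elements, but instead uses traciality to average $\pi_{x,y}$ over cables of an orthonormal basis of $P^{Spin}_{2l+1,+}$ acting through $M$ and $JMJ$, which yields the exact operator identity $\tau(\pi_{x,y})=\frac1Q\,\tau\bigl(\rho_{\theta_{u^*}^l(x),\theta_{u^*}^l(y),l}\bigr)$, and then bounds $\abs{\tau(T)}\leq\norm{T}_{B(L^2(M))}$ using the operator-norm estimate of Lemma \ref{norm}, where the Cauchy--Schwarz argument takes place at the level of vectors in $L^2(M)$ rather than in $L^2(\tau)$. If you want to salvage your outline, the averaging-by-basis-cables step is the ingredient you are missing; a factorization of $\pi_{x,y}$ alone cannot produce the $l$-dependence.
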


\begin{proof}
Let $\{b_i\}_{i=1}^{Q^{2l+1}}$ be an orthonormal basis of $P^{Spin}_{2l+1,+}$. Since $\tau$ is a trace\\
$\tau(\pi_{x,y})=\sum_{i,j=1}^{Q^{2l+1}} \dfrac{1}{\sqrt{Q}^{4l+2}} \tau(b_iJb_j^*J\pi_{x,y}b_i^*Jb_jJ)$. Let $\xi \in \bigcup_k P^{Spin}_{2l+k+1,+}$ then
$$\sum_{i,j=1}^{Q^{2l+1}} \dfrac{1}{\sqrt{Q}^{4l+2}} b_iJb_j^*J\pi_{x,y}b_i^*Jb_jJ \xi=\sum_{i,j=1}^{Q^{2l+1}} \dfrac{1}{\sqrt{Q}^{4l+2}}
\begin{tikzpicture}[align1]
\clip[rounded corners] (-1.5,-1.4) rectangle (2.8,1.4);
\draw[rounded corners] (.5,.8) rectangle (5,-.8);
\draw[rounded corners] (-5,.8) rectangle (-.5,-.8);
\draw[line width=1mm] (0,.9)--(0,-.9);
\draw[line width=1mm] (-5,0)--(5,0);
\draw[line width=1mm] (-5,.5)--(5,.5);
\draw[line width=1mm] (-5,-.5)--(5,-.5);
\draw[->] (-.5,-.25)--(-.5,-.251);
\draw[->] (.5,-.251)--(.5,-.25);
\draw[->] (-.17,-.35)--(-.17,-.15);
\path[shadedw] (2.3,2) rectangle (3,-2);
\node at (-1.4,1.25) {\scriptsize $\$$};
\node at (2.65,0) {\scriptsize $\$$};
\node at (0,1.25) {\scriptsize $\$$};
\node at (0,-1.25) {\scriptsize $\$$};
\node at (1.1,1) {\scriptsize $\$$};
\node at (1.1,-1) {\scriptsize $\$$};
\node at (-1.1,1) {\scriptsize $\$$};
\node at (-1.1,-1) {\scriptsize $\$$};
\node at (2.3,0)[Tbox, minimum width=.5cm, minimum height=2cm]{\scriptsize $\xi$};
\node at (0,.9)[Tbox, minimum height=.5cm, minimum width=.5cm]{\scriptsize $x$};
\node at (0,-.9)[Tbox, minimum height=.5cm, minimum width=.5cm]{\scriptsize $y^*$};
\node at (1.1,.65)[Tbox, minimum height=.5cm, minimum width=0cm]{\scriptsize $b_i^*$};
\node at (-1.1,.65)[Tbox, minimum height=.5cm, minimum width=0cm]{\scriptsize $b_i$};
\node at (-1.1,-.65)[Tbox, minimum height=.5cm, minimum width=0cm]{\scriptsize $b_j$};
\node at (1.1,-.65)[Tbox, minimum height=.5cm, minimum width=0cm]{\scriptsize $b_j^*$};
\node at (0,.25)[Tcirc, inner sep=0cm]{\scriptsize $2n$};
\node at (1.7,0)[Tcirc, inner sep=0cm]{\scriptsize $2k$};
\node at (1.7,.5)[Tcirc, inner sep=0cm]{\scriptsize $2l$};
\node at (1.7,-.5)[Tcirc, inner sep=0cm]{\scriptsize $2l$};
\draw[rounded corners, very thick] (-1.5,-1.4) rectangle (2.8,1.4);
\end{tikzpicture}$$
$$=\dfrac{1}{\sqrt{Q}^{4l+2}} \begin{tikzpicture}[align1]
\clip[rounded corners] (-1.2,-2.1) rectangle (1.5,2.1);

\draw (-5,1.8) rectangle (5,-1.8);
\draw[rounded corners] (-.9,-1.4) rectangle (.6,1.4);

\draw[line width=1mm] (0,.9)--(0,-.9);
\draw[line width=1mm] (-5,0)--(5,0);
\draw[line width=1mm] (-5,1.6)--(5,1.6);
\draw[line width=1mm] (-5,-1.6)--(5,-1.6);

\draw[line width=1mm, rounded corners] (-.6,.5) rectangle (.4,1.2);
\draw[line width=1mm, rounded corners] (-.6,-.5) rectangle (.4,-1.2);

\draw[->] (-.9,-.25)--(-.9,-.251);
\draw[->] (-.15,-.35)--(-.15,-.15);

\path[shadedw] (1,2.5) rectangle (3,-2.5);

\node at (-1.1,1.95) {\scriptsize $\$$};
\node at (1.35,0) {\scriptsize $\$$};
\node at (-.35,.85) {\scriptsize $\$$};
\node at (-.35,-.85) {\scriptsize $\$$};

\node at (1,0)[Tbox, minimum width=.5cm, minimum height=4.1cm]{\scriptsize $\xi$};
\node at (0,.85)[Tbox, minimum height=.5cm, minimum width=.5cm]{\scriptsize $x$};
\node at (0,-.85)[Tbox, minimum height=.5cm, minimum width=.5cm]{\scriptsize $y^*$};

\node at (0,.25)[Tcirc, inner sep=0cm]{\scriptsize $2n$};
\node at (-.4,0)[Tcirc, inner sep=0cm]{\scriptsize $2k$};
\node at (-.6,.85)[Tcirc, inner sep=0cm]{\scriptsize $2l$};
\node at (-.6,-.85)[Tcirc, inner sep=0cm]{\scriptsize $2l$};
\node at (-.4,1.6)[Tcirc, inner sep=0cm]{\scriptsize $2l$};
\node at (-.4,-1.6)[Tcirc, inner sep=0cm]{\scriptsize $2l$};

\draw[rounded corners, very thick] (-1.2,-2.1) rectangle (1.5,2.1);
\end{tikzpicture}=\dfrac{1}{Q} \rho_{\theta_{u^*}^l(x),\theta_{u^*}^l(y),l}.$$
Since $\tau$ is norm continuous and using Lemma \ref{norm}, the proposition follows.
\end{proof}

\begin{prop}
$\tau(\Theta_u^n)=\dfrac{dim(N' \cap M_{n-1})}{Q^{n+1}}$ for $n \geq 1$.
\end{prop}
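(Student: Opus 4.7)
The plan is to reduce $\tau(\Theta_u^n)$ via cable cutting to a sum of traces of $\pi_{x,y}$ operators, then evaluate these traces using the identity underlying Proposition \ref{tau approximation} together with the convergence of $\theta_{u^*}^l$ to the projection onto the flat subspace, and finally identify the resulting dimension count with $\dim(N'\cap M_{n-1})$ via Ocneanu compactness.

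First I would iterate the tangle for $\Theta_u$ from Proposition \ref{cond exp}, simplify using $E_N^2 = E_N$, and cycle the outer $E_N$ through the trace to obtain $\tau(\Theta_u^n)=\tau((E_LE_N)^n)$. The tangle for $(E_LE_N)^n$ consists of $n$ nested pairs of oriented loops threading the input strand bundle. Cutting the bundle that passes through the stack with Lemma \ref{cabling}, using an orthonormal basis $\{a_i\}$ of $P^{Spin}_{n,+}$ built by stacking $n$ appropriately-oriented biunitary crossings, expresses $(E_LE_N)^n$ as $Q^{-\alpha}\sum_i \pi_{a_i,a_i}$ for an explicit exponent $\alpha$ that accounts for all the $\sqrt Q$-normalizations inherent in the $E_N$, $E_L$ tangles and the cable cuts.

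Next I would invoke the exact equality $\tau(\pi_{x,y})=Q^{-1}\tau(\rho_{\theta_{u^*}^l(x),\theta_{u^*}^l(y),l})$ established during the proof of Proposition \ref{tau approximation}, valid for every $l\in\mathbb N$, and apply it to each term. Because $P^{Spin}_{n,+}$ is finite-dimensional and $\theta_{u^*}$ is a positive contraction whose $1$-eigenspace is the flat subspace $P^{u^*}_{n,+}$, its iterates $\theta_{u^*}^l$ converge in norm to the orthogonal projection $F$ onto $P^{u^*}_{n,+}$. Letting $l\to\infty$ using norm continuity of $\tau$, the arguments of $\rho$ become flat; the flatness condition then allows each $F(a_i)$ box to be slid through the surrounding strings until the diagram closes up, reducing $\tau$ of the limit to a universal scalar times the spin inner product $\langle F(a_i),F(a_i)\rangle_{Spin}$.

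Summing over the basis yields $\sum_i\langle F(a_i),F(a_i)\rangle_{Spin}=\dim P^{u^*}_{n,+}$, which by Ocneanu compactness applied to the horizontal symmetric commuting square equals $\dim(N'\cap M_{n-1})$ (this uses the same Ocneanu-type isomorphism as in the flatness remark preceding the definition of $P^u_{k,+}$, but for the horizontal subfactor so that $u^*$ replaces $u$). Combined with the cumulative power of $\sqrt Q$ this gives $\tau(\Theta_u^n)=Q^{-(n+1)}\dim(N'\cap M_{n-1})$. The main obstacle is the combinatorial bookkeeping of the cable-cutting step together with the verification that the $l\to\infty$ limit really does collapse the $\rho$-diagram to a spin pairing via flatness, with every factor of $\sqrt Q$ tracked exactly so that the exponent $n+1$ emerges cleanly in the denominator.
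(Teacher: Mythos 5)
Your outline is essentially the paper's argument: cable-cut $\Theta_u^n$ into $\frac{1}{Q^{n+1/2}}\sum_i\pi_{a_i,a_i}$, use the exact identity behind Proposition \ref{tau approximation} together with the fact that $\theta_{u^*}|_{P_{n,+}^{Spin}}$ is a positive contraction whose $1$-eigenspace is $P^{u^*}_{n,+}$, and identify $\dim P^{u^*}_{n,+}=\dim(N'\cap M_{n-1})$ via Ocneanu compactness. The only real difference is packaging: the paper chooses a basis adapted to $P^{u^*}_{n,+}\oplus(P^{u^*}_{n,+})^\perp$, evaluates the flat terms exactly via $\pi_{f,f}=\sqrt{Q}\,\inn{f}{f}_{Spin}\,e_N$, and kills the complement with the inequality of Proposition \ref{tau approximation}; you instead push every basis vector onto its flat part by letting $l\to\infty$ and sum $\mathrm{Tr}(F)$ basis-independently. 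Both are fine, and your constants ($Q^{-(n+1/2)}$ from the cable cut, $Q^{-1/2}\norm{Fa_i}_{Spin}^2$ per term) do assemble to $Q^{-(n+1)}\dim(N'\cap M_{n-1})$.

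The one step you should not file under ``bookkeeping'' is the final evaluation of $\tau$ after the flatness collapse. The trace $\tau$ is Popa's abstract trace on $C^*(M,e_N,JMJ)$, not a planar closure, so sliding the flat boxes around does not reduce $\tau(\rho_{Fa_i,Fa_i,l})$ to a number: for flat arguments the diagram collapses to $\sqrt{Q}\,\inn{Fa_i}{Fa_i}_{Spin}$ times an operator which is (an average of conjugates of) $e_N$, and you still need $\tau(e_N)=tr_{M_1}(e_N)=1/Q$, i.e.\ that $\tau$ restricts to the canonical trace on $M_1$; without this the ``universal scalar'' is not determined and the exponent $n+1$ does not emerge. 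The cleanest fix inside your own scheme is to note that flat vectors satisfy $\theta_{u^*}^l(Fa_i)=Fa_i$, run the exact identity backwards to get $\tau(\pi_{Fa_i,Fa_i})$, and then use $\pi_{f,f}=\sqrt{Q}\,\inn{f}{f}_{Spin}\,e_N$ plus $\tau(e_N)=1/Q$ --- which is precisely the paper's flat-term computation. Separately, the opening reduction $\tau(\Theta_u^n)=\tau((E_LE_N)^n)$ is unnecessary and slightly delicate, since $e_L$ is not shown to lie in $C^*(M,e_N,JMJ)$; it is safer to cable-cut the tangle for $\Theta_u^n$ directly, as the paper does, which lands you immediately in the span of the $\pi_{a_i,a_i}$.
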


\begin{proof}
Let $\{b_i\}_{i=1}^Q$ be an orthonormal basis of $P_{1,+}^{Spin}$ then by Proposition \ref{cond exp}
$$\Theta_u \xi = \dfrac{1}{\sqrt{Q}^3}
\begin{tikzpicture}[align1]
\clip (0,0) circle (1.8cm);
\draw[line width=1mm] (-2,0)--(0,0);
\draw (-.3,0) circle (.3cm);
\draw (-1.8,0) circle (.3cm);
\draw (0,0) circle (.8cm);
\draw(0,0) circle (1.3cm);
\draw[<-] (.8,.01)--(.8,0);
\draw[<-] (1.3,0)--(1.3,.01);
\draw[->] (-1.587,.212)--(-1.586,.211);
\draw[->] (-.512,.212)--(-.511,.213);
\node at (0,0)[Tcirc, inner sep=.2cm]{\scriptsize $\xi$};
\node at (-1.05,0)[Tcirc, inner sep=0cm]{\scriptsize $2k$};
\node at (.455,0) {\scriptsize $\$$};
\node at (1.7,0) {\scriptsize $\$$};
\draw[very thick] (0,0) circle (1.8cm);
\end{tikzpicture}=\dfrac{1}{\sqrt{Q}^3}\sum_{i=1}^Q \pi_{b_i,b_i}\xi \quad \text{ for } \xi \in P^{Spin}_{k+1,+}$$
and so $\Theta_u  \in C^*(M,e_N,JMJ)$ and $\tau(\Theta_u^n)$ is well-defined. Similarly, if $\{b_i\}_{i=1}^{Q^n}$ is an orthonormal basis of $P_{n,+}^{Spin}$ then 
$$\Theta_u^n \xi = \dfrac{1}{Q^{n+1/2}}
\begin{tikzpicture}[align1]
\clip[rounded corners] (-1.8,-1.1) rectangle (1.1,1.1);
\draw[line width=1mm] (-2,0)--(0,0);
\draw (-.3,0) circle (.3cm);
\draw (-1.8,0) circle (.3cm);
\draw[line width=1mm] (0,0) circle (.9cm);
\draw[->] (150:1.1cm) arc(150:130:1cm) -- (130:1.1cm);
\draw[->] (-1.587,.212)--(-1.586,.211);
\draw[->] (-.512,.212)--(-.511,.213);
\node at (0,0)[Tcirc, inner sep=.2cm]{\scriptsize $\xi$};
\node at (-1.25,0)[Tcirc, inner sep=0cm]{\scriptsize $2k$};
\node at (225:.9cm)[Tcirc, inner sep=0cm]{\scriptsize $2n$};
\node at (.455,0) {\scriptsize $\$$};
\node at (-1.7,.95) {\scriptsize $\$$};
\draw[very thick, rounded corners] (-1.8,-1.1) rectangle (1.1,1.1);
\end{tikzpicture}=\dfrac{1}{Q^{n+1/2}}\sum_{i=1}^{Q^n} \pi_{b_i,b_i}\xi \quad  \text{ for } \xi \in P^{Spin}_{k+1,+}.$$
Let $d_n=dim(P_{n,+}^{u^*})$ and choose an orthonormal basis, $\{f_i\}_{i=1}^{d_n}$ of $P_{n,+}^{u^*}$ and an orthonormal basis, $\{b_j\}_{j=1}^{Q^n-d_n}$ of $\left(P_{n,+}^{u^*}\right)^\perp \cap P_{n,+}^{Spin}$. Since the $f_i$'s are flat, $\pi_{f_i,f_i} = \sqrt{Q} \inn{f_i}{f_i}_{Spin} e_N=\sqrt{Q}e_N$ and so
$$\tau(\Theta_u^n)=\dfrac{1}{Q^{n+1/2}}\sum_{i=1}^{d_n} \tau(\pi_{f_i,f_i})+\dfrac{1}{Q^{n+1/2}}\sum_{j=1}^{Q^n -d_n} \tau(\pi_{b_j,b_j}) = \dfrac{dim(N' \cap M_{n-1})}{Q^{n+1}} + \dfrac{1}{Q^{n+1/2}}\sum_{j=1}^{Q^n -d_n} \tau(\pi_{b_j,b_j}).$$
It suffices to show that $\tau(\pi_{b_j,b_j})=0$. By Proposition \ref{tau approximation} $\abs{\tau(\pi_{b_j,b_j})} \leq \norm{\theta_{u^*}^l(b_j)}_{Spin}^2$ for any $l \in \mathbb N$. Since the $b_j$'s are orthogonal to the eigenspace of $\theta_{u^*}$ corresponding to the eigenvalue $\lambda=1$ and $\theta_{u^*}|_{P_{n,+}^{Spin}}$ is a positive, diagonalizable operator with norm less than or equal to one, $\norm{\theta_{u^*}^l(b_j)}_{Spin} \leq (1-\varepsilon)^l$ for some $0< \varepsilon < 1$. Therefore $\tau(\pi_{b_j,b_j})=0$.
\end{proof}

\begin{thm}
$\sigma(\Gamma\Gamma^*) \subset \overline{\bigcup_n \sigma(Q\theta_u|_{P_{n,+}^{Spin}})}$ with equality iff $N \subset M$ is amenable where $\Gamma$ is the principal graph of $N \subset M$.
\end{thm}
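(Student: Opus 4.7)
The plan is to use the previously computed moments $\tau(\Theta_u^n) = \dim(N'\cap M_{n-1})/Q^{n+1}$ to identify the $\tau$-distribution of $\Theta_u$ with the spectral measure of the principal graph, and then transfer that information to the finite-dimensional restrictions of $\theta_u$. First I would invoke the standard combinatorial identification $\dim(N'\cap M_{n-1}) = ((\Gamma\Gamma^*)^n)_{*,*}$, which counts loops of length $2n$ based at $*$ on the bipartite principal graph. This rewrites the moment formula as
\[
\tau(\Theta_u^n) = \tfrac{1}{Q}\langle (\Gamma\Gamma^*/Q)^n \delta_*, \delta_*\rangle \qquad (n \geq 1).
\]
Since $\Theta_u$ is bounded, positive and self-adjoint, its $\tau$-distribution $\nu$ is determined by moments. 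Matching the $n=0$ moment $\tau(1) = 1$ forces the explicit decomposition $\nu = \tfrac{1}{Q}\mu + (1-\tfrac{1}{Q})\delta_0$, where $\mu$ is the spectral measure of $\Gamma\Gamma^*/Q$ at $\delta_*$ on $\ell^2(V(\Gamma_{\text{even}}))$.

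Next I would use that the principal graph is connected and bipartite, so $\delta_*$ is cyclic for $\Gamma\Gamma^*/Q$, giving $\mathrm{supp}(\mu) = \sigma(\Gamma\Gamma^*/Q)$. The general fact that $\mathrm{supp}(\nu) \subset \sigma(\Theta_u)$ for a state $\tau$ then yields $\sigma(\Gamma\Gamma^*/Q) \subset \sigma(\Theta_u)$. To pass to $\theta_u$ and its finite-dimensional restrictions, I would use two observations: first, $\Theta_u = E_N E_L E_N$ vanishes on $L^2(N)^\perp$, so $\sigma(\Theta_u) = \sigma(\theta_u) \cup \{0\}$; second, for a bounded self-adjoint operator $T$ with an increasing nested sequence of $T$-invariant finite-dimensional subspaces $V_n$ whose union is dense, a uniform resolvent estimate on each $V_n$ (which extends consistently by invariance) gives $\sigma(T) = \overline{\bigcup_n \sigma(T|_{V_n})}$. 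Applied to the self-adjoint $\theta_u$ on $L^2(N) = \overline{\bigcup_n P_{n,+}^{Spin}}^{\|\cdot\|_2}$ and rescaled by $Q$, this produces the desired containment $\sigma(\Gamma\Gamma^*) \subset \overline{\bigcup_n \sigma(Q\theta_u|_{P_{n,+}^{Spin}})}$; the possibility $0 \in \sigma(\Gamma\Gamma^*)$ (in infinite-depth cases) is absorbed since $\dim P_{n,+}^u$ cannot keep pace with the exponential growth $\dim P_{n,+}^{Spin} = Q^n$, forcing $0$ to appear as a limit of restricted spectra.

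For the equivalence with amenability, if $N \subset M$ is amenable then the cited Popa theorem makes $\tau$ faithful on $C^*(M, e_N, JMJ)$, which upgrades $\mathrm{supp}(\nu) \subset \sigma(\Theta_u)$ to equality and hence to the asserted equality of spectra. Conversely, assuming equality holds, the trivial flat subspace $\mathbb{C} = P_{0,+}^{Spin}$ is a $1$-eigenvector for $\theta_u$, so $Q \in \sigma(Q\theta_u|_{P_{0,+}^{Spin}}) \subset \overline{\bigcup_n \sigma(Q\theta_u|_{P_{n,+}^{Spin}})} = \sigma(\Gamma\Gamma^*)$, giving $\|\Gamma\|^2 \geq Q$. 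Combined with the universal bound $\|\Gamma\|^2 \leq [M:N] = Q$, this forces $\|\Gamma\|^2 = Q$, so $N \subset M$ is amenable by Popa's characterization.

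The main technical obstacle is Step 1: correctly passing from the list of moments to a measure-theoretic identity for $\nu$ and cleanly separating the $\Gamma\Gamma^*$-part from the $\delta_0$-artifact that arises because $\Theta_u$ projects away from $L^2(N)^\perp$ with co-$\tau$-weight $1 - 1/Q$. The spectrum-approximation for $\theta_u$ is routine once one has self-adjointness and invariance of the $P_{n,+}^{Spin}$, and the bookkeeping involving $0 \in \sigma(\Gamma\Gamma^*)$ only enters in the infinite-depth regime and is settled by a dimension count.
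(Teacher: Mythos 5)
Your overall strategy is the paper's: match the moments $\tau(\Theta_u^n)=\dim(N'\cap M_{n-1})/Q^{n+1}$ with the loop counts $\langle(\Gamma\Gamma^*)^n\delta_*,\delta_*\rangle$, invoke Popa's faithfulness results for the amenability equivalence, approximate $\sigma(Q\theta_u)$ by the spectra of the restrictions to the invariant, diagonalizable subspaces $P_{n,+}^{Spin}$ via a resolvent estimate, and get the converse from $Q\in\sigma(Q\theta_u)$ together with $\norm{\Gamma}^2\le[M:N]$. Those parts are sound and match the paper. But there are two genuine gaps. First, your justification that $\mathrm{supp}(\mu)=\sigma(\Gamma\Gamma^*/Q)$ fails: connectedness of a bipartite graph does not make $\delta_*$ cyclic for $\Gamma\Gamma^*$ (cyclicity already fails whenever $\Gamma\Gamma^*$ has a degenerate eigenvalue, e.g.\ for the star-shaped principal graph of a group subfactor), and connectedness does not even guarantee full support of the spectral measure at $\delta_*$: for the connected bipartite graph with even vertices $*,a,b$, odd vertices $x,y$ and edges $*x$, $*y$, $ax$, $by$, one has $\sigma(\Gamma\Gamma^*)=\{0,1,3\}$ while the spectral measure at $\delta_*$ misses the eigenvalue $1$. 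What is actually needed is faithfulness of the vector state $\phi=\langle\,\cdot\;\delta_*,\delta_*\rangle$ on $C^*(1,\Gamma\Gamma^*)$, and this is a subfactor fact proved via Frobenius reciprocity in the fusion algebra (as the paper does), not a graph-theoretic one.

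Second, the atom at $0$. Because you run the moment comparison with $\tau$ and $\Theta_u$ on all of $L^2(M)$, you acquire the artifact $(1-\tfrac1Q)\delta_0$, and your containment then only gives $\sigma(\Gamma\Gamma^*/Q)\setminus\{0\}\subset\sigma(\theta_u)$; likewise in the amenable direction, $\mathrm{supp}(\nu)=\sigma(\Theta_u)$ only yields $\sigma(\Gamma\Gamma^*/Q)\cup\{0\}=\sigma(\theta_u)\cup\{0\}$, leaving membership of $0$ on both sides undecided. Your proposed repair --- that $\dim P^{u}_{n,+}$ grows more slowly than $Q^n$, ``forcing $0$ to appear as a limit of restricted spectra'' --- is a non sequitur: the size of the $1$-eigenspace says nothing about the existence of arbitrarily small eigenvalues of $\theta_u|_{P_{n,+}^{Spin}}$. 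The clean fix is exactly the paper's: compress to the corner $e_NC^*(M,e_N,JMJ)e_N$, transport it to $B(L^2(N))$ by $\psi_u$, and use the renormalized trace $\tilde{\tau}(x)=Q\tau(\psi_u x\psi_u^*)$, whose moments $\tilde{\tau}((Q\theta_u)^n)=\dim(N'\cap M_{n-1})$ match $\phi((\Gamma\Gamma^*)^n)$ for all $n\ge 0$; then the induced measure lives on $\sigma(Q\theta_u)$ with no $\delta_0$ term and the point $0$ needs no separate treatment.
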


\begin{proof}
Observe that $\Theta_u=e_N\Theta_ue_N$ and so $\Theta_u$ belongs to the corner algebra $e_NC^*(M,e_N,JMJ)e_N$ which is faithfully represented on $e_NL^2(M)$. Using the unitary, $\psi_u \colon L^2(N) \to e_NL^2(M)$, defined in Proposition \ref{psi}, we may represent $e_NC^*(M,e_N,JMJ)e_N$ on $L^2(N)$ by $\lambda \colon e_NC^*(M,e_N,JMJ)e_N \to B(L^2(N))$, $\lambda(x)\xi = \psi_u^*x\psi_u\xi$. Set $\mathcal S =\lambda(e_NC^*(M,e_N,JMJ)e_N) \subset B(L^2(N))$ and define a tracial state $\tilde{\tau} \colon \mathcal S \to \mathbb C$, $\tilde{\tau}(x) = Q\tau(\psi_u x \psi_u^*)$. Since $\psi_u\psi_u^*=e_N$, $\Theta_u \psi_u = \psi_u \theta_u$, and $\tau(e_N)=tr_{M_1}(e_N) = \frac{1}{Q}$ then $\theta_u \in \mathcal S$ and $\tilde{\tau}$ is a normalized trace with $\tilde{\tau}(\theta_u^n)=\dfrac{dim(N' \cap M_{n-1})}{Q^n}$ for all $n \geq 0$.

Let $\Gamma$ be the principal graph of $N \subset M$. $\Gamma\Gamma^*$ defines a bounded linear operator in $B(L^2(V(\Gamma_{even})))$ where $L^2(V(\Gamma_{even}))$ has the even vertices as an orthonormal basis. $C^*(1,\Gamma\Gamma^*)$ comes with a state $\phi(x)=\inn{x\delta_*}{\delta_*}$ where $\delta_*$ is the indicator function on the distinguished vertex of $\Gamma$. Frobenius reciprocity in the fusion algebra of $N\subset M$ implies that $\phi$ is faithful. By the Riesz-Markov-Kakutani representation theorem, $\phi$ (resp. $\tilde{\tau}$) induce unique positive Radon measures, $d\phi$ (resp. $d\tilde{\tau}$) on the spectrum of $\Gamma\Gamma^*$ (resp. $Q\theta_u$). Since these spectra are compact subsets of $[0,Q]$, we may consider $d\phi$ (resp. $d\tilde{\tau}$) as positive Radon measures on $[0,Q]$ by $d\phi(E)=d\phi(E \cap \sigma(\Gamma\Gamma^*))$ (resp. for $d\tilde{\tau}$). Since $\phi((\Gamma\Gamma^*)^n)=dim(N' \cap M_{n-1})$, the moments of $d\phi$ and $d\tilde{\tau}$ are equal,
$$\int_0^Q \lambda^n d\phi(\lambda) = \int_0^Q \lambda^n d\tilde{\tau}(\lambda) \quad \text{for all } n \geq 0$$
and so by the Stone-Weierstrass theorem these measures define the same continuous linear functionals on $C([0,Q])$. Then by faithfulness of $\phi$
$$\sigma(\Gamma\Gamma^*)= supp(d\phi) = supp(d\tilde{\tau}) \subset \sigma(Q\theta_u).$$
If $N \subset M$ is amenable then, due to Popa, $\tilde{\tau}$ is also faithful yielding equality of the spectra. If $N \subset M$ is not amenable then $\norm{\Gamma}^2 < \norm{Q\theta_u}=Q$ and so their spectra cannot be equal. $\sigma(Q\theta_u) = \overline{\bigcup_n \sigma(Q\theta_u|_{P_{n,+}^{Spin}})}$ remains to be shown.

$\overline{\bigcup_n \sigma(Q\theta_u|_{P_{n,+}^{Spin}})} \subset \sigma(Q\theta_u)$ is trivially true. Now let $r \notin \sigma(Q\theta_u)$ and observe that $\norm{\dfrac{1}{r-Q\theta_u}} \leq \dfrac{1}{dist(r,\sigma(Q\theta_u))}$ by continuous functional calculus. Since $r-Q\theta_u$ maps $P_{n,+}^{Spin}$ bijectively onto $P_{n,+}^{Spin}$, then $\norm{\dfrac{1}{(r-Q\theta_u)|_{P_{n,+}^{Spin}}}} \leq \norm{\left. \dfrac{1}{r-Q\theta_u} \right|_{P_{n,+}^{Spin}}}$. Since $Q\theta_u|_{P_{n,+}^{Spin}}$ is diagonalizable, $r \notin \overline{\bigcup_n \sigma(Q\theta_u|_{P_{n,+}^{Spin}})}$, and so $\sigma(\Gamma \Gamma^*) = \overline{\bigcup_n \sigma(Q\theta_u|_{P_{n,+}^{Spin}})}$.
\end{proof}

This provides us with two computational tools. First, if we already know a spin model subfactor is amenable then we can compute elements in the spectrum of its principal graph. Second, since the spectrum of a finite graph is contained in the algebraic integers, we may prove that a spin model subfactor is infinite depth by finding non-algebraic integers in the spectrum of $Q\theta_u$.

\section{Applications}
First, we consider continuous families of complex Hadamard matrices, $u_t$. Such a family yields a continuous family of angle operators $\theta_{u_t}|_{P_{n,+}^{Spin}}$ for each $n \geq 0$. This will imply that infinite depth subfactors are a generic feature of continuous families of complex Hadamards. For a von Neumann algebra, $A$, let $(A)_1$ denote the unit ball and let
$$D(A,B)=\sup \set{ \inf_{x \in (B)_1} \norm{a-x}, \inf_{x \in (A)_1} \norm{b-x}}{a \in (A)_1 \text{ and } b \in (B)_1}$$
denote the Hausdorff metric between two von Neumann algebras, $A,B \subset B(H)$.
Then in \cite{Phi74}, Phillips shows that if $D(A,B) <\varepsilon (\leq \frac{1}{12})$ then $Z(A) \cong Z(B)$. The isomorphism $\varphi \colon Z(A) \to Z(B)$ is given by $\varphi(p)=q$ where $q$ is the unique central projection in $B$ such that $\norm{p-q}<\frac{1}{3}$. Furthermore, if $D(A,B) < \frac{1}{25618}$ and $A$ is a type $I$ von Neumann algebra then $A$ and $B$ are unitarily equivalent. See also \cite{Chr79} for similar results for type $II_1$ von Neumann algebras using the trace norm instead of the operator norm.

\begin{lem}
Let $A_0 \subset A_1 \subset A_2 \subset B(H)$ and $B_0 \subset B_1 \subset B_2 \subset B(H)$ be finite dimensional $C^*$-algebras with unital inclusions. Suppose there exists a projection $e \in A_2 \cap B_2$ implementing the unique conditional expectations $E^{A_1}_{A_0}$ and $E^{B_1}_{B_0}$ with respect to their Markov traces. Further assume that $A_2=\{A_1,e\}''$, $B_2=\{B_1,e\}''$ and that $D(A_i,B_i) < \frac{1}{265180}$ for $i=0,1,2$. Then the bijection between minimal central projections above induces an isomorphism between $\Gamma^{A_i \subset A_{i+1}}$ and $\Gamma^{B_i \subset B_{i+1}}$ and commutes with the map $p \mapsto pe$ for $p \in Z(A_0)$, sending $p$ to a minimal central projection in $A_2$.
\end{lem}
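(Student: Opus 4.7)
The plan is to produce a single unitary $U \in B(H)$ close to the identity that simultaneously conjugates the whole tower $A_0 \subset A_1 \subset A_2$ to $B_0 \subset B_1 \subset B_2$ \emph{and} fixes $e$, so that the bijections $\varphi_i \colon Z(A_i) \to Z(B_i)$ from Phillips' theorem are all just $\mathrm{Ad}_U$ and the basic-construction map $p \mapsto pe$ is automatically respected.

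First, I would apply Phillips' unitary-equivalence statement from \cite{Phi74} iteratively. Since each $A_i$ is finite-dimensional (hence type I) and $D(A_i,B_i)$ is well below $\tfrac{1}{25618}$, one gets a unitary $u_2 \in B(H)$ close to $1$ with $u_2 A_2 u_2^* = B_2$. Inside $B_2$ (again type I) the subalgebra $u_2 A_1 u_2^*$ lies Hausdorff-close to $B_1$, so a second Phillips application within $B_2$ yields $v_1 \in U(B_2)$ close to $1$ with $v_1 u_2 A_1 u_2^* v_1^* = B_1$; a third application within $B_1$ yields $v_0 \in U(B_1)$ close to $1$ with $v_0 v_1 u_2 A_0 u_2^* v_1^* v_0^* = B_0$. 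Setting $U = v_0 v_1 u_2$, the bound $D(A_i,B_i) < \tfrac{1}{265180}$ is chosen precisely to accommodate this three-step iteration and force $\|U-1\| < \tfrac{1}{6}$, so $\|UxU^* - x\| < \tfrac13$ for every contraction $x$.

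Next I would show $UeU^* = e$. Conjugation by $U$ is an isomorphism of inclusions $(A_0 \subset A_1) \to (B_0 \subset B_1)$, hence carries the Markov trace of the first to the (unique) Markov trace of the second and intertwines $E^{A_1}_{A_0}$ with $E^{B_1}_{B_0}$. Therefore $UeU^* \in B_2$ satisfies $(UeU^*)y(UeU^*) = E^{B_1}_{B_0}(y) UeU^*$ for $y \in B_1$ and commutes with $B_0$; in other words, $UeU^*$ is a Jones projection for $B_0 \subset B_1$ sitting inside $B_2 = \{B_1,e\}''$. Since $e$ is the \emph{unique} such projection (it is pinned down as the orthogonal projection onto $L^2(B_0)$ in the standard representation, or equivalently by the Jones relations together with $\mathrm{tr}(e \cdot) = [B_1:B_0]^{-1}E^{B_1}_{B_0}$), one concludes $UeU^* = e$.

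With these two ingredients in hand, the rest is bookkeeping. For any minimal central projection $p \in Z(A_i)$, the element $UpU^*$ is a central projection of $B_i$ with $\|UpU^* - p\| \leq 2\|U-1\| < \tfrac13$, so Phillips' uniqueness clause identifies it as $\varphi_i(p)$; in particular $\varphi_i = \mathrm{Ad}_U|_{Z(A_i)}$. Because $\mathrm{Ad}_U$ is an isomorphism of the inclusions $A_i \subset A_{i+1}$, it preserves multiplicities, giving the claimed isomorphism $\Gamma^{A_i \subset A_{i+1}} \cong \Gamma^{B_i \subset B_{i+1}}$ under $\varphi_i \times \varphi_{i+1}$. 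Finally, for $p \in Z(A_0)$, using $UeU^* = e$ gives $U(pe)U^* = \varphi_0(p)\, e$, so the central support of $pe$ in $A_2$ maps under $\varphi_2$ to the central support of $\varphi_0(p)\, e$ in $B_2$, which is the conclusion about commuting with $p \mapsto pe$.

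The main obstacle is the second step: producing $U$ that simultaneously conjugates all three subalgebras while staying close to $1$, and then verifying $UeU^* = e$. The iterative-Phillips argument needs careful tracking of the growing error (this is exactly why the constant $\tfrac{1}{265180}$ appears rather than $\tfrac{1}{25618}$), and the identity $UeU^* = e$ hinges on uniqueness of the Markov trace for a finite-dimensional inclusion and uniqueness of the Jones projection implementing a given conditional expectation—both standard but essential facts.
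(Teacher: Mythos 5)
Your strategy is genuinely different from the paper's (which never builds a global unitary: it identifies the centers via Phillips, then compares the corners $pqA_{i+1}pq$ and $\varphi(pq)B_{i+1}\varphi(pq)$, whose Hausdorff distance is still below $\tfrac{1}{25618}$, and reads off $\gamma_{p,q}=\sqrt{\dim(pqA_{i+1}pq)/\dim(pqA_ipq)}$; the last claim is the one-line estimate $\norm{pe-\varphi(p)e}\leq\norm{p-\varphi(p)}<\tfrac13$), but as written it has two genuine gaps. First, the iterative Phillips step is not quantified. The cited result only gives unitary equivalence when $D<\tfrac{1}{25618}$; to run your three-step iteration you need an explicit bound of the form $\norm{u-1}\leq C\,D(A_2,B_2)$ with $C$ small enough that $2\norm{u_2-1}+D(A_1,B_1)$ is still below the threshold $\tfrac{1}{25618}\approx 4\times 10^{-5}$, and likewise at the next stage, and finally $\norm{U-1}<\tfrac16$. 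You assert that $\tfrac{1}{265180}$ "is chosen precisely" for this, but in the paper that constant serves a different purpose (it absorbs the loss from cutting by the nearby projections $pq$ and $\varphi(pq)$ when passing to corners), and no estimate in your argument shows the accumulated error stays under the threshold; with the large constants typical of such perturbation theorems this is exactly where the argument can die. You would also need the implementing unitaries to lie in $B_2$ (resp.\ $B_1$), which requires the version of Phillips producing $u$ in the von Neumann algebra generated by the two algebras.

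Second, and more seriously, the claim $UeU^*=e$ rests on a uniqueness statement that is false. A projection $f\in B_2$ satisfying $fyf=E^{B_1}_{B_0}(y)f$ for $y\in B_1$, commuting with $B_0$, having the right trace, and with $\{B_1,f\}''=B_2$ need not equal $e$: already for $B_0=\mathbb C\subset B_1=M_n(\mathbb C)$ acting standardly, the rank-one projection onto $\widehat{u}$ for any unitary $u\in M_n$ has all these properties. The characterization of $e$ as "the projection onto $L^2(B_0)$" is unavailable because $H$ is not assumed to be a standard representation. So from your hypotheses you only get that $UeU^*$ is \emph{some} Jones projection for $B_0\subset B_1$, not $e$ itself (one could repair this with a further correcting unitary in $B_0'\cap B_2$ close to $1$ conjugating $UeU^*$ to $e$, but you do not construct it). Note, finally, that this detour is unnecessary for the last claim: since the hypothesis already puts the \emph{same} $e$ in both towers, $\varphi_0(p)e$ is automatically a minimal central projection of $B_2$, and $\norm{pe-\varphi_0(p)e}\leq\norm{p-\varphi_0(p)}<\tfrac13$ identifies it with $\varphi_2(pe)$ — which is exactly the paper's argument and avoids needing $UeU^*=e$ at all.
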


\begin{proof}
Observe that the inclusion matrix for $A_0 \subset A_1$, is given by $\Gamma^{A_0\subset A_1} = (\gamma_{p,q})_{p,q \text{ min. central proj.}}$ where $\gamma_{p,q} = 0$ if $pq=0$ and $\gamma_{p,q} = \sqrt{\frac{dim(pqA_1pq)}{dim(pqA_0pq)}}$ otherwise. Thus, if $D(A_0,B_0)$ and $D(A_1,B_1)$ are sufficiently small then the centers of $A_i$ and $B_i$ can be identified. Furthermore,\\ $D(pqA_ipq,\varphi(pq)B_i\varphi(pq)) < \frac{1}{25618}$ for any $p \in Z(A_0)$ and $q\in Z(A_1)$. This implies that the inclusion matrices for $A_0 \subset A_1$ and $B_0 \subset B_1$ are isomorphic. The same argument applies to the other inclusions.

The last claim follows from the observation $\norm{pe - \varphi(p)e} \leq \norm{p-\varphi(p)} < \frac{1}{3}$.
\end{proof}

\begin{prop}
Let $H \colon \mathbb R \to M_Q(\mathbb C)$, $t \mapsto H_t$, be a continuous family of complex Hadamard matrices. Then one of the following is true:
\begin{enumerate}
\item The corresponding principal graphs are equal for all $t \in \mathbb R$.
\item There are uncountably many $t\in \mathbb R$ such that the corresponding subfactors are infinite depth.
\end{enumerate}
\end{prop}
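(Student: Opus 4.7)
The plan is a continuity-plus-countability argument on the eigenvalues of $\theta_{u_t}$. Because $t \mapsto H_t$ is continuous, the biunitary $u_t \in P_{2,+}^{Spin}$ varies continuously in $t$, and the planar tangle defining $\theta_u$ is multilinear in $u$ and $u^*$; hence $t \mapsto \theta_{u_t}|_{P_{n,+}^{Spin}}$ is a continuous map into $B(P_{n,+}^{Spin})$ for every fixed $n$. The proof of the preceding theorem shows that $\theta_{u_t}|_{P_{n,+}^{Spin}}$ is positive, diagonalizable, and of norm at most $1$, so by standard perturbation theory for self-adjoint matrices its eigenvalues can be listed as continuous functions $\lambda_1(t) \ge \cdots \ge \lambda_{Q^n}(t)$ of $t$.

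I argue by contrapositive. Suppose neither conclusion holds: only countably many $t$ give infinite depth subfactors, so uncountably many $t$ are finite depth. Since only countably many finite principal graphs exist, pigeonhole produces an uncountable set $S_\Gamma \subseteq \mathbb R$ on which the principal graph equals a fixed finite $\Gamma$, while failure of (1) produces some $t_0 \in \mathbb R$ whose principal graph is not $\Gamma$. The proof of the preceding theorem identifies the multiplicity of the top eigenvalue $Q$ of $Q\theta_{u_t}|_{P_{n,+}^{Spin}}$ as $d_n(t) := \dim P_{n,+}^{u_t} = \dim(P' \cap R_{n-1}^t)$. Different principal graphs yield different sequences $(d_n)_n$ of higher relative commutant dimensions, so $d_n(t_0) \ne d_n(t)$ for some $n$ and some $t \in S_\Gamma$. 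If every continuous eigenvalue function $\lambda_i$ were constant in $t$, then every multiplicity, and hence every $d_n(t)$, would be constant in $t$; so some $\lambda_i : \mathbb R \to \mathbb R$ is genuinely non-constant.

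The image of a non-constant continuous function $\mathbb R \to \mathbb R$ is a connected set of positive diameter, so it contains an open interval $I$. The algebraic numbers are countable, so $I \setminus \overline{\mathbb Q}$ is uncountable, and picking one $t_\alpha \in \lambda_i^{-1}(\alpha)$ for each non-algebraic $\alpha \in I$ yields an uncountable family of distinct reals at which the non-algebraic $\alpha$ lies in $\sigma(Q\theta_{u_{t_\alpha}}|_{P_{n,+}^{Spin}}) \subseteq \sigma(Q\theta_{u_{t_\alpha}})$. If the subfactor at any such $t_\alpha$ were finite depth it would be amenable, and the preceding theorem would then force $\sigma(Q\theta_{u_{t_\alpha}}) = \sigma(\Gamma^{t_\alpha}(\Gamma^{t_\alpha})^*)$, a finite set of algebraic integers, contradicting $\alpha \notin \overline{\mathbb Q}$. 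Hence each $t_\alpha$ gives an infinite depth subfactor, contradicting the standing assumption that only countably many $t$ do.

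The main obstacle is the rigidity assertion in the second paragraph: that $d_n(t)$ equals the multiplicity of $Q$ in the spectrum of $Q\theta_{u_t}|_{P_{n,+}^{Spin}}$ (which follows because flat elements exactly span the $1$-eigenspace of $\theta_u$, as used in the preceding theorem's proof), together with the fact that distinct principal graphs produce distinct higher relative commutant dimension sequences. With these inputs in place, the rest is a clean application of the intermediate value theorem combined with the countability of $\overline{\mathbb Q}$.
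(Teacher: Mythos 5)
Your third paragraph is essentially the paper's own argument for alternative (2): continuity of the finite-dimensional operators $\theta_{u_t}|_{P_{n,+}^{Spin}}$, the intermediate value theorem, countability of the algebraic numbers, and the preceding theorem (finite depth $\Rightarrow$ amenable $\Rightarrow$ $\sigma(Q\theta_u)=\sigma(\Gamma\Gamma^*)$ is a finite set of algebraic integers) to conclude infinite depth at uncountably many parameters. That half is fine.

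The genuine gap is in the other half, where you assert that ``distinct principal graphs produce distinct higher relative commutant dimension sequences'' $(d_n)_n$. This is exactly the step you cannot take for free, and it is not a standard fact. The numbers $\dim(N'\cap M_{n-1})$ only record the closed-walk counts of the principal graph based at $*$, i.e.\ the moments of the spectral measure of $\Gamma\Gamma^*$ at the distinguished vertex; the principal graph itself is the full Bratteli/inclusion data of the tower $N'\cap M\subset N'\cap M_1\subset\cdots$. Non-isomorphic pointed graphs can share a base point with identical walk-generating functions (cospectral vertices exist in non-isomorphic graphs), so constancy of all multiplicities of eigenvalues of $\theta_{u_t}|_{P_{n,+}^{Spin}}$ does not by itself pin down the principal graph, and conversely a change of principal graph need not be visible in the sequence $(d_n)_n$. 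The paper is structured precisely to avoid this: when $t\mapsto\sigma(Q\theta_{u_t}|_{P_{n,+}^{Spin}})$ is constant for every $n$, the spectral gap below the eigenvalue $1$ makes the flat subalgebras $P^{u_t^*}_{n,+}$ vary continuously in the Hausdorff metric $D(\cdot,\cdot)$, and then the preceding perturbation lemma (via Phillips' theorem) transfers the actual inclusion matrices --- not just dimensions --- showing that the set of $t$ whose principal graph agrees with a given one up to depth $n$ is open; connectedness of $\mathbb R$ then gives alternative (1). To repair your contrapositive you would need either a proof that the dimension sequence determines the principal graph (unavailable), or you should replace that step by the eigenspace-continuity plus perturbation-lemma argument, at which point your proof collapses into the paper's.

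One harmless loose end: your eigenvalue functions $\lambda_i$ are those of $\theta_{u_t}$ rather than $Q\theta_{u_t}$, but since $Q\in\mathbb N$, $\alpha$ is algebraic iff $Q\alpha$ is, so the non-algebraicity argument is unaffected.
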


\begin{proof}
Given $t \mapsto H_t$, let $t \mapsto u_t$ be the corresponding biunitaries. Then for all fixed $n \geq 0$, $t \mapsto Q\theta_{u_t}|_{P^{Spin}_{n,+}}$ is a continuous map to positive finite dimensional matrices. Since the spectra of positive matrices vary continuously in the Hausdorff metric, if $t \mapsto \sigma( Q\theta_{u_t}|_{P^{Spin}_{n,+}})$ is not constant then uncountably many $t$ yield infinite depth subfactors.

Now suppose that  $t \mapsto \sigma( Q\theta_{u_t}|_{P^{Spin}_{n,+}})$ is constant for all $n \geq 0$. Since the spectrum is constant and $\sigma(\theta_{u_t}|_{P^{Spin}_{n,+}}) = \sigma(\theta_{u_t^*}|_{P^{Spin}_{n,+}})$, the $1$-eigenspaces, $P^{u_t^*}_{n,+} \subset P^{Spin}_{n,+}$, vary continuously in the metric $D(A,B)$ defined above. Letting $N_t \subset M_t$ denote the spin model subfactor from $u_t$, the previous lemma implies that
$$S_{s,n} = \set{t \in \mathbb R}{\Gamma^{N_t \subset M_t} \text{ is isomorphic up to depth $n$ to } \Gamma^{N_s \subset M_s}}$$
is an open subset of $\mathbb R$ for all $n$. By connectedness $S_{s,n}=\mathbb R$ for all $n$ and so $(1)$ is true.
\end{proof}

\begin{exmp}
In \cite{Pet97} Petrescu constructs a continuous family of inequivalent $7 \times 7$ complex Hadamard matrices given by
$$H=\left[\begin{array}{ccccccc}
	\lambda \omega & \lambda \omega^4 & \omega^5 & \omega^3 & \omega^3 & \omega & 1\\
	\lambda \omega^4 & \lambda \omega & \omega^3 & \omega^5 & \omega^3 & \omega & 1\\
	\omega^5 & \omega^3 & \overline{\lambda} \omega & \overline{\lambda} \omega^4 & \omega & \omega^3 & 1\\
    \omega^3 & \omega^5 & \overline{\lambda} \omega^4 & \overline{\lambda} \omega & \omega & \omega^3 & 1\\
	\omega^3 & \omega^3 & \omega & \omega & \omega^4 & \omega^5 & 1\\
	\omega & \omega & \omega^3 & \omega^3 & \omega^5 & \omega^4 & 1\\
	1 & 1 & 1 & 1 & 1 & 1 & 1\\
\end{array}\right]$$
where $\omega = e^{i\pi/3}$ and $\lambda \in \mathbb T$. Letting $u$ be the corresponding biunitary in $P^{Spin}$, $7\theta_u|_{P^{Spin}_{2,+}}$ has an eigenvector given by
$$\xi=\left[\begin{array}{ccccccc}
	0 & 0 & 1 & -1 & \frac{1}{\sqrt{3}}Im(\lambda \overline{\omega}) & \frac{-1}{\sqrt{3}}Im(\lambda) & \frac{1}{\sqrt{3}}Im(\lambda \omega)\\
	0 & 0 & -1 & 1 & \frac{1}{\sqrt{3}}Im(\lambda \overline{\omega}) & \frac{-1}{\sqrt{3}}Im(\lambda) & \frac{1}{\sqrt{3}}Im(\lambda \omega)\\
	1 & -1 & 0 & 0 & \frac{1}{\sqrt{3}}Im(\lambda) & \frac{-1}{\sqrt{3}}Im(\lambda \omega) & \frac{-1}{\sqrt{3}}Im(\lambda \overline{\omega})\\
	-1 & 1 & 0 & 0 & \frac{1}{\sqrt{3}}Im(\lambda) & \frac{-1}{\sqrt{3}}Im(\lambda \omega) & \frac{-1}{\sqrt{3}}Im(\lambda \overline{\omega})\\
	\frac{1}{\sqrt{3}}Im(\lambda \overline{\omega}) & \frac{1}{\sqrt{3}}Im(\lambda \overline{\omega}) & \frac{1}{\sqrt{3}}Im(\lambda) & \frac{1}{\sqrt{3}}Im(\lambda) & 0 & 2Re(\lambda) & -2Re(\lambda \overline{\omega})\\
	\frac{-1}{\sqrt{3}}Im(\lambda) & \frac{-1}{\sqrt{3}}Im(\lambda) & \frac{-1}{\sqrt{3}}Im(\lambda \omega) & \frac{-1}{\sqrt{3}}Im(\lambda \omega) & 2Re(\lambda) & 0 & -2Re(\lambda \omega)\\
	\frac{1}{\sqrt{3}}Im(\lambda \omega) & \frac{1}{\sqrt{3}}Im(\lambda \omega) & \frac{-1}{\sqrt{3}}Im(\lambda \overline{\omega}) & \frac{-1}{\sqrt{3}}Im(\lambda \overline{\omega}) & -2Re(\lambda \overline{\omega}) & -2Re(\lambda \omega) & 0\\
\end{array}\right]$$
with eigenvalue $\dfrac{1}{7^2}$ where $\xi \in P_{2,+}^{Spin}$ by the identification $\xi=\sum_{i,j=1}^7 \xi_{i,j} \hat{i} \otimes \hat{j}$. Thus every subfactor from this continuous family is infinite depth.
\end{exmp}

This eigenvector was found numerically in Matlab and verified using the Symbolic Math Toolbox. The Matlab code used can be found in the appendix.

\begin{exmp}
Let $p$ be a prime and $m \in \mathbb N$ such that $p^m \equiv 1 \text{ mod }4$. Then the Galois field of order $q=p^m$, $\mathbb F_q$, has a quadratic character
$$\chi(a)=\left\lbrace
\begin{array}{ccc} 0 & \text{if }a=0 \\
1 & \text{if } a=b^2 \text{ for some } b \in \mathbb F_q \backslash \{0\} \\
-1 & \text{if } a \neq b^2 \text{ for any } b \in \mathbb F_q \backslash \{0\} \\
\end{array} \right. .$$
Let $j_{n,m}$ be the $n \times m$ matrix of ones, $I_n$ the $n \times n$ identity matrix, and define the $q \times q$ matrix, $K_{a,b} = \chi(a-b)$, indexed by $\mathbb F_q$. Then the $2(q+1) \times 2(q+1)$ Paley type $II$ Hadamard matrix (\cite{Pal33}) is given by
$$H=\left[\begin{array}{cc} 0 & j\\ j^T & K\\ \end{array}\right] \otimes \left[\begin{array}{cc} 1 & 1\\ 1 & -1\\ \end{array}\right]+I_{q+1} \otimes \left[\begin{array}{cc} 1 & -1\\ -1 & -1\\ \end{array}\right].$$
Letting $u$ be the corresponding biunitary from $H$ and $Q=2(q+1)$, $Q\theta_u|_{P_{2,+}^{Spin}}$ has an eigenvector
$$\xi = \left[\begin{array}{cc} 0 & 0\\ 0 & K\\ \end{array}\right] \otimes \left[\begin{array}{cc} 1 & 0\\ 0 & -1\\ \end{array}\right]$$
with eigenvalue $\lambda=\dfrac{4^2q}{(q+1)^2}$. Since $q$ is a prime power congruent to $1 \text{ mod }4$, $\lambda$ is not an algebraic integer and so Paley type $II$ Hadamard matrices yield infinite depth subfactors.
\end{exmp}

\begin{proof}
Since the type $II$ Paley Hadamard matrices are more easily expressed using tensors, we will work in a tensor product of planar algebras as defined in \cite{Jon99}. Letting $P^{Spin,Q}$ denote the spin planar algebra with $Q$ spins, it can be shown that $P^{Spin,q+1} \otimes P^{Spin,2} \cong P^{Spin,2(q+1)}$ by a bijection, $\set{\hat{i} \otimes \hat{j}}{i=1,...,q+1, j=1,2} \leftrightarrow \set{\hat{k}}{k=1,...,2(q+1)}$. We will also identify matrices with the $2$-box spaces of $P^{Spin}$ via $(a_{i,j})_{i,j=1}^Q=\sum_{i,j}a_{i,j}\hat{i}\otimes\hat{j}$.

Define the $2\times 2$ and $q+1 \times q+1$ matrices
$$H_+=\left[\begin{array}{cc} 1 & 1\\ 1 & -1\\ \end{array}\right] \quad H_-=\left[\begin{array}{cc} 1 & -1\\ -1 & -1\\ \end{array}\right] \quad J=\left[\begin{array}{cc} 0 & j_{1,q}\\ j_{q,1} & 0\\ \end{array}\right] \quad L=\left[\begin{array}{cc} 0 & 0\\ j_{q,1} & 0\\ \end{array}\right]$$
$$D=\left[\begin{array}{cc} 1 & 0\\ 0 & -1\\ \end{array}\right] \quad E=\left[\begin{array}{cc} 0 & 1\\ -1 & 0\\ \end{array}\right] \quad T=\left[\begin{array}{cc} 0 & 0\\ 0 & j_{q,q}-I_q\\ \end{array}\right] \quad \text{and} \quad S=\left[\begin{array}{cc} 0 & 0\\ 0 & K\\ \end{array}\right].$$
Then $H=I_{q+1}\otimes H_- + J \otimes H_+ + S \otimes H_+$ and $\xi = S \otimes D$. Since $q \cong 1\text{ mod }4$, $K$ is symmetric, and so $H=H^*=H^T=\overline{H}$ and $\xi=\xi^*=\xi^T=\overline{\xi}$. This implies that orienting strings and marked intervals for $\xi$ are unnecessary. To evaluate $Q\theta_u(\xi)$, we first compute
$$\phi_u(\xi)=\begin{tikzpicture}[align.7]
	\clip[rounded corners] (0,-1.5) rectangle (3,1.5);
	\path[shaded] (0,-1.5) rectangle (3,1.5);
	\path[unshaded] (1.3,-1.5)--(1.3,1.5)--(1.7,1.5)--(1.7,-1.5);
	\path[unshaded] (1.5,0) circle (.9cm);
		\begin{scope}
		\clip (1.5,0) circle (.9cm);
		\path[shaded] (1.3,-1.5)--(1.3,1.5)--(1.7,1.5)--(1.7,-1.5);
		\end{scope}
	\draw (1.3,-1.5)--(1.3,1.5);
	\draw (1.7,-1.5)--(1.7,1.5);
	\draw (1.5,0) circle (.9cm);
	\node at (1.5,0)[Tbox, minimum width=.5cm, minimum height=.5cm]{\scriptsize $\xi$};
	\node at (1,0) {\scriptsize $\$$};
	\node at (.15,0) {\scriptsize $\$$};
	\draw[very thick, rounded corners] (0,-1.5) rectangle (3,1.5);
\end{tikzpicture}.$$
For each intersection of strings we must substitute in $H$. Since $H$ is a sum of three simple tensors, $\phi_u(\xi)$ decomposes into a sum of $3^4$ tangles with disks filled by the terms $S\otimes D$, $I_{q+1}\otimes H_-$, $J \otimes H_+$, or $S \otimes H_+$. The following identities force all but eight of these terms to be zero.\\
	$$(i) \space I_{q+1}=\dfrac{1}{\sqrt{q+1}}\begin{tikzpicture}[align1] \clip[rounded corners] (0,-.5) rectangle (1,.5); \draw[shaded] (.3,-.5) rectangle (.7,.5); \draw[rounded corners, very thick] (0,-.5) rectangle (1,.5); \end{tikzpicture}\in P^{Spin,q+1}_{2,+} \quad \quad
	(ii) \space \begin{tikzpicture}[align1]
		\clip[rounded corners] (0,-.6) rectangle (1,.6);
		\draw[shaded] (.35,-.6) rectangle (1,.6);
		\draw[unshaded, rounded corners] (.55,-.4) rectangle (.85,.4);
		\node at (.45,0)[Tbox, minimum width=.5cm, minimum height=.5cm]{\scriptsize $S$};
		\node at (.1,.45) {\scriptsize $\$$};
		\draw[very thick, rounded corners] (0,-.6) rectangle (1,.6);
	\end{tikzpicture}=
	\begin{tikzpicture}[align1]
		\clip[rounded corners] (0,-.6) rectangle (1,.6);
		\draw[shaded] (.35,-.6) rectangle (1,.6);
		\draw[unshaded, rounded corners] (.55,-.4) rectangle (.85,.4);
		\node at (.45,0)[Tbox, minimum width=.5cm, minimum height=.5cm]{\scriptsize $J$};
		\node at (.1,.45) {\scriptsize $\$$};
		\draw[very thick, rounded corners] (0,-.6) rectangle (1,.6);
	\end{tikzpicture}=
	\begin{tikzpicture}[align1]
		\clip[rounded corners] (0,-.6) rectangle (1,.6);
		\draw[shaded] (.35,-.6) rectangle (1,.6);
		\draw[unshaded, rounded corners] (.55,-.4) rectangle (.85,.4);
		\node at (.45,0)[Tbox, minimum width=.5cm, minimum height=.5cm]{\scriptsize $L$};
		\node at (.1,.45) {\scriptsize $\$$};
		\draw[very thick, rounded corners] (0,-.6) rectangle (1,.6);
	\end{tikzpicture}=
	\begin{tikzpicture}[align1]
		\clip[rounded corners] (0,-.6) rectangle (1,.6);
		\draw[shaded] (.35,-.6) rectangle (1,.6);
		\draw[unshaded, rounded corners] (.55,-.4) rectangle (.85,.4);
		\node at (.45,0)[Tbox, minimum width=.5cm, minimum height=.5cm]{\scriptsize $T$};
		\node at (.1,.45) {\scriptsize $\$$};
		\draw[very thick, rounded corners] (0,-.6) rectangle (1,.6);
	\end{tikzpicture}=0$$
	$$(iii) \space \begin{tikzpicture}[align1]
		\clip[rounded corners] (0,-.6) rectangle (2.5,.6);
		\draw[shaded] (0,-.6) rectangle (.525,.6);
		\draw[shaded] (.725,-.6) rectangle (1.15,.6);
		\draw[shaded] (1.35,-.6) rectangle (1.775,.6);
		\draw[shaded] (1.975,-.6) rectangle (2.5,.6);
		\node at (.625,0)[Tbox, minimum width=.5cm, minimum height=.5cm]{\scriptsize $S$};
		\node at (1.25,0)[Tbox, minimum width=.5cm, minimum height=.5cm]{\scriptsize $J$};
		\node at (1.875,0)[Tbox, minimum width=.5cm, minimum height=.5cm]{\scriptsize $S$};
		\node at (.1,.45) {\scriptsize $\$$};
		\draw[very thick, rounded corners] (0,-.6) rectangle (2.5,.6);
	\end{tikzpicture}=0\quad
	(iv) \space \begin{tikzpicture}[align1]
		\clip[rounded corners] (0,-1) rectangle (2,1);
		\path[shaded] (.5,-.5) rectangle (1.5,.5);
		\path[shaded] (0,-1) rectangle (.5,1);
		\path[shaded] (1.5,-1) rectangle (2,1);
		\path[unshaded] (1,0)--(.5,.5)--(.5,-.5)--(1,0);
		\path[unshaded] (1,0)--(1.5,.5)--(1.5,-.5)--(1,0);
		\draw (.5,-.5) rectangle (1.5,.5);
		\draw (1,0)--(.5,.5)--(.5,1)--(1.5,1)--(1.5,.5)--(1,0);
		\draw (1,0)--(.5,-.5)--(.5,-1)--(1.5,-1)--(1.5,-.5)--(1,0);
		\node at (1,0)[Tbox, minimum width=.5cm, minimum height=.5cm]{\scriptsize $D$};
		\node at (.5,.5)[Tbox, minimum width=.5cm, minimum height=.5cm]{\scriptsize $H_+$};
		\node at (.5,-.5)[Tbox, minimum width=.5cm, minimum height=.5cm]{\scriptsize $H_+$};
		\node at (1.5,.5)[Tbox, minimum width=.5cm, minimum height=.5cm]{\scriptsize $H_+$};
		\node at (1.5,-.5)[Tbox, minimum width=.5cm, minimum height=.5cm]{\scriptsize $H_+$};
		\node at (.1,.85) {\scriptsize $\$$};
		\draw[very thick, rounded corners] (0,-1) rectangle (2,1);
	\end{tikzpicture}=
	\begin{tikzpicture}[align1]
		\clip[rounded corners] (0,-1) rectangle (2,1);
		\path[shaded] (.5,-.5) rectangle (1.5,.5);
		\path[shaded] (0,-1) rectangle (.5,1);
		\path[shaded] (1.5,-1) rectangle (2,1);
		\path[unshaded] (1,0)--(.5,.5)--(.5,-.5)--(1,0);
		\path[unshaded] (1,0)--(1.5,.5)--(1.5,-.5)--(1,0);
		\draw (.5,-.5) rectangle (1.5,.5);
		\draw (1,0)--(.5,.5)--(.5,1)--(1.5,1)--(1.5,.5)--(1,0);
		\draw (1,0)--(.5,-.5)--(.5,-1)--(1.5,-1)--(1.5,-.5)--(1,0);
		\node at (1,0)[Tbox, minimum width=.5cm, minimum height=.5cm]{\scriptsize $D$};
		\node at (.5,.5)[Tbox, minimum width=.5cm, minimum height=.5cm]{\scriptsize $H_-$};
		\node at (.5,-.5)[Tbox, minimum width=.5cm, minimum height=.5cm]{\scriptsize $H_+$};
		\node at (1.5,.5)[Tbox, minimum width=.5cm, minimum height=.5cm]{\scriptsize $H_+$};
		\node at (1.5,-.5)[Tbox, minimum width=.5cm, minimum height=.5cm]{\scriptsize $H_-$};
		\node at (.1,.85) {\scriptsize $\$$};
		\draw[very thick, rounded corners] (0,-1) rectangle (2,1);
	\end{tikzpicture}=0=
	\begin{tikzpicture}[align1]
		\clip[rounded corners] (0,-1) rectangle (2,1);
		\path[shaded] (.5,-.5) rectangle (1.5,.5);
		\path[shaded] (0,-1) rectangle (.5,1);
		\path[shaded] (1.5,-1) rectangle (2,1);
		\path[unshaded] (1,0)--(.5,.5)--(.5,-.5)--(1,0);
		\path[unshaded] (1,0)--(1.5,.5)--(1.5,-.5)--(1,0);
		\draw (.5,-.5) rectangle (1.5,.5);
		\draw (1,0)--(.5,.5)--(.5,1)--(1.5,1)--(1.5,.5)--(1,0);
		\draw (1,0)--(.5,-.5)--(.5,-1)--(1.5,-1)--(1.5,-.5)--(1,0);
		\node at (1,0)[Tbox, minimum width=.5cm, minimum height=.5cm]{\scriptsize $E$};
		\node at (.5,.5)[Tbox, minimum width=.5cm, minimum height=.5cm]{\scriptsize $H_+$};
		\node at (.5,-.5)[Tbox, minimum width=.5cm, minimum height=.5cm]{\scriptsize $H_+$};
		\node at (1.5,.5)[Tbox, minimum width=.5cm, minimum height=.5cm]{\scriptsize $H_+$};
		\node at (1.5,-.5)[Tbox, minimum width=.5cm, minimum height=.5cm]{\scriptsize $H_+$};
		\node at (.1,.85) {\scriptsize $\$$};
		\draw[very thick, rounded corners] (0,-1) rectangle (2,1);
	\end{tikzpicture}=
	\begin{tikzpicture}[align1]
		\clip[rounded corners] (0,-1) rectangle (2,1);
		\path[shaded] (.5,-.5) rectangle (1.5,.5);
		\path[shaded] (0,-1) rectangle (.5,1);
		\path[shaded] (1.5,-1) rectangle (2,1);
		\path[unshaded] (1,0)--(.5,.5)--(.5,-.5)--(1,0);
		\path[unshaded] (1,0)--(1.5,.5)--(1.5,-.5)--(1,0);
		\draw (.5,-.5) rectangle (1.5,.5);
		\draw (1,0)--(.5,.5)--(.5,1)--(1.5,1)--(1.5,.5)--(1,0);
		\draw (1,0)--(.5,-.5)--(.5,-1)--(1.5,-1)--(1.5,-.5)--(1,0);
		\node at (1,0)[Tbox, minimum width=.5cm, minimum height=.5cm]{\scriptsize $E$};
		\node at (.5,.5)[Tbox, minimum width=.5cm, minimum height=.5cm]{\scriptsize $H_-$};
		\node at (.5,-.5)[Tbox, minimum width=.5cm, minimum height=.5cm]{\scriptsize $H_+$};
		\node at (1.5,.5)[Tbox, minimum width=.5cm, minimum height=.5cm]{\scriptsize $H_+$};
		\node at (1.5,-.5)[Tbox, minimum width=.5cm, minimum height=.5cm]{\scriptsize $H_-$};
		\node at (.1,.85) {\scriptsize $\$$};
		\draw[very thick, rounded corners] (0,-1) rectangle (2,1);
	\end{tikzpicture}$$
	$$(v) \space \begin{tikzpicture}[align1]
		\clip[rounded corners] (0,-1) rectangle (2,1);
		\path[shaded] (0,-1) rectangle (2,1);
		\path[unshaded, rounded corners] (.5,.5) rectangle (1.5,1);
		\path[unshaded, rounded corners] (.5,-.5) rectangle (1.5,-1);
		\path[unshaded, rounded corners] (1,0)--(.5,.1)--(.5,-.5)--(1,0);
		\path[unshaded, rounded corners] (1,0)--(1.5,.1)--(1.5,-.5)--(1,0);
		\draw[rounded corners] (.5,.5) rectangle (1.5,2);
		\draw[rounded corners] (.5,-.5) rectangle (1.5,-2);
		\draw[rounded corners] (1,0)--(.5,.1)--(.5,-.5)--(1,0);
		\draw[rounded corners] (1,0)--(1.5,.1)--(1.5,-.5)--(1,0);
		\node at (1,0)[Tbox, minimum width=.5cm, minimum height=.5cm]{\scriptsize $S$};
		\node at (.5,-.5)[Tbox, minimum width=.5cm, minimum height=.5cm]{\scriptsize $S$};
		\node at (1.5,-.5)[Tbox, minimum width=.5cm, minimum height=.5cm]{\scriptsize $S$};
		\node at (.1,.85) {\scriptsize $\$$};
		\draw[very thick, rounded corners] (0,-1) rectangle (2,1);
	\end{tikzpicture}=\sum_{i,r \in \mathbb F_q}\chi(i-r)^3 \hat{i} \otimes \hat{i} =0 \quad \quad
	(vi) \space \begin{tikzpicture}[align1]
		\clip[rounded corners] (0,-1) rectangle (2,1);
		\path[shaded] (.5,-.5) rectangle (1.5,.5);
		\path[shaded] (0,-1) rectangle (.5,1);
		\path[shaded] (1.5,-1) rectangle (2,1);
		\path[unshaded] (1,0)--(.5,.5)--(.5,-.5)--(1,0);
		\path[unshaded] (1,0)--(1.5,.5)--(1.5,-.5)--(1,0);
		\draw (.5,-.5) rectangle (1.5,.5);
		\draw (1,0)--(.5,.5)--(.5,1)--(1.5,1)--(1.5,.5)--(1,0);
		\draw (1,0)--(.5,-.5)--(.5,-1)--(1.5,-1)--(1.5,-.5)--(1,0);
		\node at (1,0)[Tbox, minimum width=.5cm, minimum height=.5cm]{\scriptsize $D$};
		\node at (.5,.5)[Tbox, minimum width=.5cm, minimum height=.5cm]{\scriptsize $H_-$};
		\node at (.5,-.5)[Tbox, minimum width=.5cm, minimum height=.5cm]{\scriptsize $H_+$};
		\node at (1.5,.5)[Tbox, minimum width=.5cm, minimum height=.5cm]{\scriptsize $H_+$};
		\node at (1.5,-.5)[Tbox, minimum width=.5cm, minimum height=.5cm]{\scriptsize $H_+$};
		\node at (.1,.85) {\scriptsize $\$$};
		\draw[very thick, rounded corners] (0,-1) rectangle (2,1);
	\end{tikzpicture}=\left[\begin{array}{cc} 1 & 1\\ -1 & -1\\ \end{array}\right]$$
Due to $(iv)$, terms of $\phi_u(\xi)$ without $I_{q+1} \otimes H_-$ are zero. By $(i)$, terms with two or more $I_{q+1} \otimes H_-$'s are zero due to $(ii)$, $(iii)$, $(iv)$, or $(v)$ depending on the placement of $I_{q+1} \otimes H_-$ terms. Thus all nonzero terms contain one instance of $I_{q+1} \otimes H_-$. Due to $(iii)$, terms with exactly one $I_{q+1} \otimes H_-$ and one $J \otimes H_+$ disks are zero. $(iii)$ further restricts how $J \otimes H_+$ and $S \otimes H_+$ can be arranged to yield nonzero terms. Therefore the only nonzero terms are
$$\dfrac{1}{\sqrt{q+1}}\begin{tikzpicture}[align1]
	\clip[rounded corners] (0,-1) rectangle (2,1);
	\path[shaded] (0,-1) rectangle (2,1);
	\path[unshaded, rounded corners] (.5,.5) rectangle (1.5,1);
	\path[unshaded, rounded corners] (.5,-.5) rectangle (1.5,-1);
	\path[unshaded, rounded corners] (1,0)--(.5,.1)--(.5,-.5)--(1,0);
	\path[unshaded, rounded corners] (1,0)--(1.5,.5)--(1.5,-.5)--(1,0);
	\draw[rounded corners] (.5,.5) rectangle (1.5,2);
	\draw[rounded corners] (.5,-.5) rectangle (1.5,-2);
	\draw[rounded corners] (1,0)--(.5,.1)--(.5,-.5)--(1,0);
	\draw[rounded corners] (1,0)--(1.5,.5)--(1.5,-.5)--(1,0);
	\node at (1,0)[Tbox, minimum width=.5cm, minimum height=.5cm]{\scriptsize $S$};
	\node at (.5,-.5)[Tbox, minimum width=.5cm, minimum height=.5cm]{\scriptsize $S$};
	\node at (1.5,.5)[Tbox, minimum width=.5cm, minimum height=.5cm]{\scriptsize $S$};
	\node at (1.5,-.5)[Tbox, minimum width=.5cm, minimum height=.5cm]{\scriptsize $S$};
	\node at (.1,.85) {\scriptsize $\$$};
	\draw[very thick, rounded corners] (0,-1) rectangle (2,1);
\end{tikzpicture} \otimes
\begin{tikzpicture}[align1]
	\clip[rounded corners] (0,-1) rectangle (2,1);
	\path[shaded] (.5,-.5) rectangle (1.5,.5);
	\path[shaded] (0,-1) rectangle (.5,1);
	\path[shaded] (1.5,-1) rectangle (2,1);
	\path[unshaded] (1,0)--(.5,.5)--(.5,-.5)--(1,0);
	\path[unshaded] (1,0)--(1.5,.5)--(1.5,-.5)--(1,0);
	\draw (.5,-.5) rectangle (1.5,.5);
	\draw (1,0)--(.5,.5)--(.5,1)--(1.5,1)--(1.5,.5)--(1,0);
	\draw (1,0)--(.5,-.5)--(.5,-1)--(1.5,-1)--(1.5,-.5)--(1,0);
	\node at (1,0)[Tbox, minimum width=.5cm, minimum height=.5cm]{\scriptsize $D$};
	\node at (.5,.5)[Tbox, minimum width=.5cm, minimum height=.5cm]{\scriptsize $H_-$};
	\node at (.5,-.5)[Tbox, minimum width=.5cm, minimum height=.5cm]{\scriptsize $H_+$};
	\node at (1.5,.5)[Tbox, minimum width=.5cm, minimum height=.5cm]{\scriptsize $H_+$};
	\node at (1.5,-.5)[Tbox, minimum width=.5cm, minimum height=.5cm]{\scriptsize $H_+$};
	\node at (.1,.85) {\scriptsize $\$$};
	\draw[very thick, rounded corners] (0,-1) rectangle (2,1);
\end{tikzpicture}
\quad \quad
\dfrac{1}{\sqrt{q+1}}\begin{tikzpicture}[align1]
	\clip[rounded corners] (0,-1) rectangle (2,1);
	\path[shaded] (0,-1) rectangle (2,1);
	\path[unshaded, rounded corners] (.5,.5) rectangle (1.5,1);
	\path[unshaded, rounded corners] (.5,-.5) rectangle (1.5,-1);
	\path[unshaded, rounded corners] (1,0)--(.5,.5)--(.5,-.5)--(1,0);
	\path[unshaded, rounded corners] (1,0)--(1.5,.1)--(1.5,-.5)--(1,0);
	\draw[rounded corners] (.5,.5) rectangle (1.5,2);
	\draw[rounded corners] (.5,-.5) rectangle (1.5,-2);
	\draw[rounded corners] (1,0)--(.5,.5)--(.5,-.5)--(1,0);
	\draw[rounded corners] (1,0)--(1.5,.1)--(1.5,-.5)--(1,0);
	\node at (1,0)[Tbox, minimum width=.5cm, minimum height=.5cm]{\scriptsize $S$};
	\node at (.5,-.5)[Tbox, minimum width=.5cm, minimum height=.5cm]{\scriptsize $S$};
	\node at (.5,.5)[Tbox, minimum width=.5cm, minimum height=.5cm]{\scriptsize $S$};
	\node at (1.5,-.5)[Tbox, minimum width=.5cm, minimum height=.5cm]{\scriptsize $S$};
	\node at (.1,.85) {\scriptsize $\$$};
	\draw[very thick, rounded corners] (0,-1) rectangle (2,1);
\end{tikzpicture} \otimes 
\begin{tikzpicture}[align1]
	\clip[rounded corners] (0,-1) rectangle (2,1);
	\path[shaded] (.5,-.5) rectangle (1.5,.5);
	\path[shaded] (0,-1) rectangle (.5,1);
	\path[shaded] (1.5,-1) rectangle (2,1);
	\path[unshaded] (1,0)--(.5,.5)--(.5,-.5)--(1,0);
	\path[unshaded] (1,0)--(1.5,.5)--(1.5,-.5)--(1,0);
	\draw (.5,-.5) rectangle (1.5,.5);
	\draw (1,0)--(.5,.5)--(.5,1)--(1.5,1)--(1.5,.5)--(1,0);
	\draw (1,0)--(.5,-.5)--(.5,-1)--(1.5,-1)--(1.5,-.5)--(1,0);
	\node at (1,0)[Tbox, minimum width=.5cm, minimum height=.5cm]{\scriptsize $D$};
	\node at (.5,.5)[Tbox, minimum width=.5cm, minimum height=.5cm]{\scriptsize $H_+$};
	\node at (.5,-.5)[Tbox, minimum width=.5cm, minimum height=.5cm]{\scriptsize $H_+$};
	\node at (1.5,.5)[Tbox, minimum width=.5cm, minimum height=.5cm]{\scriptsize $H_-$};
	\node at (1.5,-.5)[Tbox, minimum width=.5cm, minimum height=.5cm]{\scriptsize $H_+$};
	\node at (.1,.85) {\scriptsize $\$$};
	\draw[very thick, rounded corners] (0,-1) rectangle (2,1);
\end{tikzpicture}$$
$$\dfrac{1}{\sqrt{q+1}}\begin{tikzpicture}[align1]
	\clip[rounded corners] (0,-1) rectangle (2,1);
	\path[shaded] (0,-1) rectangle (2,1);
	\path[unshaded, rounded corners] (.5,.5) rectangle (1.5,1);
	\path[unshaded, rounded corners] (.5,-.5) rectangle (1.5,-1);
	\path[unshaded, rounded corners] (1,0)--(.5,.1)--(.5,-.5)--(1,0);
	\path[unshaded, rounded corners] (1,0)--(1.5,.5)--(1.5,-.5)--(1,0);
	\draw[rounded corners] (.5,.5) rectangle (1.5,2);
	\draw[rounded corners] (.5,-.5) rectangle (1.5,-2);
	\draw[rounded corners] (1,0)--(.5,.1)--(.5,-.5)--(1,0);
	\draw[rounded corners] (1,0)--(1.5,.5)--(1.5,-.5)--(1,0);
	\node at (1,0)[Tbox, minimum width=.5cm, minimum height=.5cm]{\scriptsize $S$};
	\node at (.5,-.5)[Tbox, minimum width=.5cm, minimum height=.5cm]{\scriptsize $S$};
	\node at (1.5,.5)[Tbox, minimum width=.5cm, minimum height=.5cm]{\scriptsize $J$};
	\node at (1.5,-.5)[Tbox, minimum width=.5cm, minimum height=.5cm]{\scriptsize $J$};
	\node at (.1,.85) {\scriptsize $\$$};
	\draw[very thick, rounded corners] (0,-1) rectangle (2,1);
\end{tikzpicture} \otimes 
\begin{tikzpicture}[align1]
	\clip[rounded corners] (0,-1) rectangle (2,1);
	\path[shaded] (.5,-.5) rectangle (1.5,.5);
	\path[shaded] (0,-1) rectangle (.5,1);
	\path[shaded] (1.5,-1) rectangle (2,1);
	\path[unshaded] (1,0)--(.5,.5)--(.5,-.5)--(1,0);
	\path[unshaded] (1,0)--(1.5,.5)--(1.5,-.5)--(1,0);
	\draw (.5,-.5) rectangle (1.5,.5);
	\draw (1,0)--(.5,.5)--(.5,1)--(1.5,1)--(1.5,.5)--(1,0);
	\draw (1,0)--(.5,-.5)--(.5,-1)--(1.5,-1)--(1.5,-.5)--(1,0);
	\node at (1,0)[Tbox, minimum width=.5cm, minimum height=.5cm]{\scriptsize $D$};
	\node at (.5,.5)[Tbox, minimum width=.5cm, minimum height=.5cm]{\scriptsize $H_-$};
	\node at (.5,-.5)[Tbox, minimum width=.5cm, minimum height=.5cm]{\scriptsize $H_+$};
	\node at (1.5,.5)[Tbox, minimum width=.5cm, minimum height=.5cm]{\scriptsize $H_+$};
	\node at (1.5,-.5)[Tbox, minimum width=.5cm, minimum height=.5cm]{\scriptsize $H_+$};
	\node at (.1,.85) {\scriptsize $\$$};
	\draw[very thick, rounded corners] (0,-1) rectangle (2,1);
\end{tikzpicture}
\quad \quad
\dfrac{1}{\sqrt{q+1}}\begin{tikzpicture}[align1]
	\clip[rounded corners] (0,-1) rectangle (2,1);
	\path[shaded] (0,-1) rectangle (2,1);
	\path[unshaded, rounded corners] (.5,.5) rectangle (1.5,1);
	\path[unshaded, rounded corners] (.5,-.5) rectangle (1.5,-1);
	\path[unshaded, rounded corners] (1,0)--(.5,.5)--(.5,-.5)--(1,0);
	\path[unshaded, rounded corners] (1,0)--(1.5,.1)--(1.5,-.5)--(1,0);
	\draw[rounded corners] (.5,.5) rectangle (1.5,2);
	\draw[rounded corners] (.5,-.5) rectangle (1.5,-2);
	\draw[rounded corners] (1,0)--(.5,.5)--(.5,-.5)--(1,0);
	\draw[rounded corners] (1,0)--(1.5,.1)--(1.5,-.5)--(1,0);
	\node at (1,0)[Tbox, minimum width=.5cm, minimum height=.5cm]{\scriptsize $S$};
	\node at (.5,-.5)[Tbox, minimum width=.5cm, minimum height=.5cm]{\scriptsize $J$};
	\node at (.5,.5)[Tbox, minimum width=.5cm, minimum height=.5cm]{\scriptsize $J$};
	\node at (1.5,-.5)[Tbox, minimum width=.5cm, minimum height=.5cm]{\scriptsize $S$};
	\node at (.1,.85) {\scriptsize $\$$};
	\draw[very thick, rounded corners] (0,-1) rectangle (2,1);
\end{tikzpicture} \otimes 
\begin{tikzpicture}[align1]
	\clip[rounded corners] (0,-1) rectangle (2,1);
	\path[shaded] (.5,-.5) rectangle (1.5,.5);
	\path[shaded] (0,-1) rectangle (.5,1);
	\path[shaded] (1.5,-1) rectangle (2,1);
	\path[unshaded] (1,0)--(.5,.5)--(.5,-.5)--(1,0);
	\path[unshaded] (1,0)--(1.5,.5)--(1.5,-.5)--(1,0);
	\draw (.5,-.5) rectangle (1.5,.5);
	\draw (1,0)--(.5,.5)--(.5,1)--(1.5,1)--(1.5,.5)--(1,0);
	\draw (1,0)--(.5,-.5)--(.5,-1)--(1.5,-1)--(1.5,-.5)--(1,0);
	\node at (1,0)[Tbox, minimum width=.5cm, minimum height=.5cm]{\scriptsize $D$};
	\node at (.5,.5)[Tbox, minimum width=.5cm, minimum height=.5cm]{\scriptsize $H_+$};
	\node at (.5,-.5)[Tbox, minimum width=.5cm, minimum height=.5cm]{\scriptsize $H_+$};
	\node at (1.5,.5)[Tbox, minimum width=.5cm, minimum height=.5cm]{\scriptsize $H_-$};
	\node at (1.5,-.5)[Tbox, minimum width=.5cm, minimum height=.5cm]{\scriptsize $H_+$};
	\node at (.1,.85) {\scriptsize $\$$};
	\draw[very thick, rounded corners] (0,-1) rectangle (2,1);
\end{tikzpicture}$$
and their conjugates. These terms can be evaluated directly and yield $$\phi_u(\xi)=-\dfrac{4}{q+1}T\otimes D + \dfrac{2(q-1)}{q+1}\left( J \otimes D +L \otimes E-L^T \otimes E\right).$$
Letting
$$\phi_u^*(\xi)=\begin{tikzpicture}[align.7]
	\clip[rounded corners] (0,-1.5) rectangle (3,1.5);
	\path[unshaded] (0,-1.5) rectangle (3,1.5);
	\path[shaded] (1.3,-1.5)--(1.3,1.5)--(1.7,1.5)--(1.7,-1.5);
	\path[shaded] (1.5,0) circle (.9cm);
	\begin{scope}
		\clip (1.5,0) circle (.9cm);
		\path[unshaded] (1.3,-1.5)--(1.3,1.5)--(1.7,1.5)--(1.7,-1.5);
	\end{scope}
	\draw (1.3,-1.5)--(1.3,1.5);
	\draw (1.7,-1.5)--(1.7,1.5);
	\draw (1.5,0) circle (.9cm);
	\node at (1.5,0)[Tbox, minimum width=.5cm, minimum height=.5cm]{\scriptsize $\xi$};
	\node at (1,0) {\scriptsize $\$$};
	\node at (.15,0) {\scriptsize $\$$};
	\draw[very thick, rounded corners] (0,-1.5) rectangle (3,1.5);
\end{tikzpicture}$$
we must evaluate $\phi_u^*(T\otimes D)$, $\phi_u^*(J \otimes D)$, $\phi_u^*(L \otimes E)$, and $\phi_u^*(L^T \otimes E)$. Fortunately $T$, $J$, $L$, and $E$ satisfy similar identities to $S$ and $D$ forcing most of the terms in $\phi^*_u$ to be zero. Only one term requires a nontrivial fact of $K$ which we highlight here. It can be shown that $K^2=qI_q - j_{q,q}$ using basic properties of $\mathbb F_q$ and $\chi$. We use this identity to evaluate the following term appearing in $\phi^*_u(T \otimes D)$.
$$\dfrac{1}{\sqrt{q+1}}\begin{tikzpicture}[align1]
	\clip[rounded corners] (0,-1) rectangle (2,1);
	\path[shaded] (.5,-1) rectangle (1.5,1);
	\path[unshaded] (1,0)--(.5,-.5)--(1.5,-.5)--cycle;
	\path[unshaded, rounded corners] (1,0)--(.7,.5)--(1.5,.5)--cycle;
	\draw (.5,-1) rectangle (1.5,1);
	\draw (1,0)--(.5,-.5)--(1.5,-.5)--cycle;
	\draw[rounded corners] (1,0)--(.7,.5)--(1.5,.5)--cycle;
	\node at (1,0)[Tbox, minimum width=.5cm, minimum height=.5cm]{\scriptsize $T$};
	\node at (.5,-.5)[Tbox, minimum width=.5cm, minimum height=.5cm]{\scriptsize $S$};
	\node at (1.5,.5)[Tbox, minimum width=.5cm, minimum height=.5cm]{\scriptsize $S$};
	\node at (1.5,-.5)[Tbox, minimum width=.5cm, minimum height=.5cm]{\scriptsize $S$};
	\node at (.1,.85) {\scriptsize $\$$};
	\draw[very thick, rounded corners] (0,-1) rectangle (2,1);
\end{tikzpicture} \otimes 
\begin{tikzpicture}[align1]
	\clip[rounded corners] (0,-1) rectangle (2,1);
	\path[shaded] (.5,-1) rectangle (1.5,1);
	\path[unshaded] (1,0)--(.5,-.5)--(1.5,-.5)--cycle;
	\path[unshaded] (1,0)--(.5,.5)--(1.5,.5)--cycle;
	\draw (.5,-1) rectangle (1.5,1);
	\draw (1,0)--(.5,-.5)--(1.5,-.5)--cycle;
	\draw (1,0)--(.5,.5)--(1.5,.5)--cycle;
	\node at (1,0)[Tbox, minimum width=.5cm, minimum height=.5cm]{\scriptsize $D$};
	\node at (.5,.5)[Tbox, minimum width=.5cm, minimum height=.5cm]{\scriptsize $H_-$};
	\node at (.5,-.5)[Tbox, minimum width=.5cm, minimum height=.5cm]{\scriptsize $H_+$};
	\node at (1.5,.5)[Tbox, minimum width=.5cm, minimum height=.5cm]{\scriptsize $H_+$};
	\node at (1.5,-.5)[Tbox, minimum width=.5cm, minimum height=.5cm]{\scriptsize $H_+$};
	\node at (.1,.85) {\scriptsize $\$$};
	\draw[very thick, rounded corners] (0,-1) rectangle (2,1);
\end{tikzpicture}=
\dfrac{1}{\sqrt{q+1}}\begin{tikzpicture}[align1]
	\clip[rounded corners] (0,-1) rectangle (2,1);
	\path[shaded] (.5,-1) rectangle (1.5,1);
	\path[unshaded, rounded corners] (.9,0)--(.7,.5)--(1.5,.5)--(1.1,0)--(1.5,-.5)--(.5,-.5)--cycle;
	\draw (.5,-1) rectangle (1.5,1);
	\draw[rounded corners] (.9,0)--(.7,.5)--(1.5,.5)--(1.1,0)--(1.5,-.5)--(.5,-.5)--cycle;
	\node at (.5,-.5)[Tbox, minimum width=.5cm, minimum height=.5cm]{\scriptsize $S$};
	\node at (1.5,.5)[Tbox, minimum width=.5cm, minimum height=.5cm]{\scriptsize $S$};
	\node at (1.5,-.5)[Tbox, minimum width=.5cm, minimum height=.5cm]{\scriptsize $S$};
	\node at (.1,.85) {\scriptsize $\$$};
	\draw[very thick, rounded corners] (0,-1) rectangle (2,1);
\end{tikzpicture} \otimes \left[\begin{array}{cc} 1 & -1\\ 1 & -1\\ \end{array}\right]$$
$$=\dfrac{q}{(q+1)^{3/2}}\begin{tikzpicture}[align1]
	\clip[rounded corners] (0,-.5) rectangle (1.3,.5);
	\draw[shaded] (.3,-1) rectangle (1.1,1);
	\draw[unshaded] (.7,0) circle (.2cm);
	\node at (.4,0)[Tbox, minimum width=.5cm, minimum height=.5cm]{\scriptsize $S$};
	\node at (.1,.35) {\scriptsize $\$$};
	\draw[very thick, rounded corners] (0,-.5) rectangle (1.3,.5);
\end{tikzpicture} \otimes \left[\begin{array}{cc} 1 & -1\\ 1 & -1\\ \end{array}\right] - \dfrac{1}{q+1} S \otimes \left[\begin{array}{cc} 1 & -1\\ 1 & -1\\ \end{array}\right] =- \dfrac{1}{q+1} S \otimes \left[\begin{array}{cc} 1 & -1\\ 1 & -1\\ \end{array}\right]$$
The remaining terms can be evaluated using similar techniques with the identity, $Kj_{q,1}=0$, and we list the results below.
$$\phi_u^*(T \otimes D)=-\dfrac{4}{q+1} S \otimes D \quad \phi_u^*(J \otimes D)=\dfrac{4}{q+1} S \otimes D \quad \phi_u^*(L \otimes E)=\dfrac{2}{q+1} S \otimes D=-\phi_u^*(L^T \otimes E)$$
Combining these computations yield $Q\theta_u(\xi)=\dfrac{4^2q}{(q+1)^2}\xi$.
\end{proof}

In \cite{Jon99} Jones shows that Paley type $I$ Hadamard matrix subfactors either have Temperley-Lieb $2$-box spaces or are depth two. In particular, the Paley type $I$ Hadamard matrices of dimension $12$ and $24$ have Temperley-Lieb $2$-box spaces. Similar examples are explored in \cite{Lia95}. Since the Paley type $I$ and type $II$ $12 \times 12$ Hadamard matrices are equivalent, the unique $12 \times 12$ Hadamard matrix yields an infinite depth subfactor with a Temperley-Lieb $2$-box space. From numerical computations we expect this to be true for Paley type $II$ Hadamard matrices as well. Since Paley type $II$ Hadamard matrix subfactors are infinite depth we suspect they have Temperley-Lieb standard invariants, hence $A_\infty$ principal graphs.

\begin{conj}
	Paley type $II$ Hadamard matrix subfactors have Temperley-Lieb standard invariants.
\end{conj}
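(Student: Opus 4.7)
The conjecture is equivalent to showing $\dim(P_{n,+}^u) = C_n$ for every $n \geq 0$, where $C_n$ is the $n$-th Catalan number; since $Q = 2(q+1) \geq 12$ places us in the generic range $\delta = \sqrt{Q} > 2$, this equals the dimension of the $n$-box space of Temperley--Lieb. The Jones projections sit inside $P_{\cdot,+}^u$ automatically because they are purely planar and do not involve $u$, so the Temperley--Lieb subalgebra is always contained. The content of the conjecture is therefore that no additional flat elements appear. The base cases $n = 0, 1$ are handled by the earlier proposition showing $P_{0,+}^u \cong P_{1,+}^u \cong \mathbb{C}$, and the first genuine test is $\dim(P_{2,+}^u) = 2$; since that same section shows $P_{2,+}^u$ is abelian, this amounts to showing there are exactly two minimal projections.

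For the $n = 2$ base case, the plan is to combine the symmetries $H = H^* = H^T = \overline{H}$ of Paley type $II$ matrices with the tensor decomposition $H = I_{q+1} \otimes H_- + J \otimes H_+ + S \otimes H_+$ used in the infinite-depth example. By Proposition~\ref{equalspectra}, $P_{2,+}^u$ is preserved by conjugation, transpose, and adjoint, cutting the problem down to the real symmetric part of $P_{2,+}^{Spin}$. The planar identities $(i)$--$(vi)$ established in the Paley type $II$ example should eliminate most of the $3^4$ summands appearing in $\phi_u(y)$ for such $y$, reducing the flatness equation $\theta_u(y) = y$ to a small, explicit linear system whose only solutions one expects to be $1$ and the Jones projection $e_2$.

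For the inductive step I would pursue two parallel routes. The direct combinatorial route: write a prospective flat $y \in P_{n,+}^{Spin}$ in the basis induced by the tensor decomposition of $H$ and show, using the orthogonality relations $K^2 = qI_q - j_{q,q}$ and $K j_{q,1} = 0$ together with identities $(i)$--$(vi)$, that the flatness equation forces the non-Temperley--Lieb components of $y$ to vanish. The indirect spectral route: first establish amenability of the Paley type $II$ subfactor, perhaps by constructing F{\o}lner-type sequences from the additive Fourier transform on $\mathbb{F}_q$, then apply the theorem relating $\sigma(\Gamma\Gamma^*)$ to $\sigma(Q\theta_u)$ together with the moment formula $\tau(\Theta_u^n) = \dim(N' \cap M_{n-1})/Q^{n+1}$ to match the moments against the Catalan moments of the semicircle distribution on $[0,Q]$, forcing $\dim(N' \cap M_{n-1}) = C_{n+1}$.

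The main obstacle is the inductive step. The tensor expansion of $H$ produces $3^{2n}$ terms at depth $n$ and the flatness condition is nonlocal, so any inductive reduction must exploit a structural feature of quadratic characters rather than the brute-force cancellations available at depth two. Proving amenability is also genuinely hard, since the subfactor is infinite depth and is not manifestly a group-measure or Bisch--Haagerup construction. Progress most likely requires either a new planar-algebraic identity specific to Paley-type Hadamards reflecting the combinatorics of quadratic residues, or a spectral/random-matrix interpretation of the relative commutant dimensions that forces the Catalan growth directly.
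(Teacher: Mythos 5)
There is a fundamental mismatch here: the statement you are addressing is stated in the paper as a \emph{conjecture}, supported only by numerical evidence (and by the coincidence that the unique $12\times 12$ Hadamard matrix is both Paley type $I$ and type $II$, so Jones's type $I$ result gives a Temperley--Lieb $2$-box space in that one case). The paper offers no proof, and your proposal does not supply one either: it is a research plan in which the decisive steps --- completing the $n=2$ computation for general $q$, the inductive step for all $n$, and the proposed amenability argument --- are explicitly left open. So as a proof of the statement it has a genuine gap: nothing beyond the already-known base cases $n=0,1$ is actually established.

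Beyond incompleteness, your ``indirect spectral route'' is internally inconsistent with the conjecture it is meant to prove. If the standard invariant is Temperley--Lieb at index $Q=2(q+1)\geq 12>4$, the principal graph is $A_\infty$, so $\norm{\Gamma}^2=4<Q=[M:N]$; by the amenability criterion quoted in the paper ($N\subset M$ amenable iff $\norm{\Gamma}^2=[M:N]$), such a subfactor is necessarily \emph{non}-amenable. Establishing amenability would therefore refute, not prove, the conjecture, and the inclusion $\sigma(\Gamma\Gamma^*)\subset\overline{\bigcup_n\sigma(Q\theta_u|_{P^{Spin}_{n,+}})}$ that holds without amenability does not control dimensions. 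Likewise, the moment identity $\tau(\Theta_u^n)=\dim(N'\cap M_{n-1})/Q^{n+1}$ holds unconditionally, but exploiting it requires computing $\tau(\Theta_u^n)$ (equivalently the $1$-eigenspace dimensions of $\theta_{u^*}|_{P^{Spin}_{n,+}}$) independently, which is exactly the unsolved combinatorial problem; the route is circular. The direct route (flatness equations plus $K^2=qI_q-j_{q,q}$, $Kj_{q,1}=0$) is the plausible one, and it is consistent with the paper's own evidence, but as you acknowledge the $3^{2n}$-term expansion has no known structural reduction, so the conjecture remains open.
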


\begin{center}
\large Acknowledgments
\end{center}

The author is very grateful to his advisor Professor Dietmar Bisch for the many conversations, excellent suggestions, and feedback on the preprint. The author also thanks Professor Larry Rolen for his encouragement and feedback on the Paley type $II$ Hadamard computations.

\pagebreak

\textsc{Department of Mathematics, Vanderbilt University, 1326 Stevenson Center, Nashville, TN 37240}

\textit{Email address:} \texttt{michael.r.montgomery@vanderbilt.edu}

\pagebreak
\begin{appendices}
	\section{}
	\lstset{language=Matlab,
		breaklines=true,
		morekeywords={matlab2tikz},
		keywordstyle=\color{blue},
		morekeywords=[2]{1}, keywordstyle=[2]{\color{black}},
		identifierstyle=\color{black},
		stringstyle=\color{mylilas},
		commentstyle=\color{mygreen},
		showstringspaces=false,
		numbers=left,
		numberstyle={\tiny \color{black}},
		numbersep=9pt,
		emph=[1]{for,end,break},emphstyle=[1]\color{red},   
	}
	\begin{lstlisting}
t=sym('t'); %This symbol stands for lambda.
w=sym(exp(1i*pi/3)); %w is the 6th primitive root of unity.
    u = [t*w t*w^4 w^5 w^3 w^3 w^1 1;
        t*w^4 t*w w^3 w^5 w^3 w^1 1;
        w^5 w^3 conj(t)*w conj(t)*w^4 w^1 w^3 1;
        w^3 w^5 conj(t)*w^4 conj(t)*w w^1 w^3 1;
        w^3 w^3 w w w^4 w^5 1;
        w w w^3 w^3 w^5 w^4 1;
        1 1 1 1 1 1 1]./sqrt(7);
Eigenvalue=sym('1/49');
EigenvectorArray=[0 0 1 -1 (1/sqrt(3))*imag(t*conj(w)) (-1/sqrt(3))*imag(t) (1/sqrt(3))*imag(t*w);
    0 0 -1 1 (1/sqrt(3))*imag(t*conj(w)) (-1/sqrt(3))*imag(t) (1/sqrt(3))*imag(t*w);
    1 -1 0 0 (1/sqrt(3))*imag(t) (-1/sqrt(3))*imag(t*w) (-1/sqrt(3))*imag(t*conj(w));
    -1 1 0 0 (1/sqrt(3))*imag(t) (-1/sqrt(3))*imag(t*w) (-1/sqrt(3))*imag(t*conj(w));
    (1/sqrt(3))*imag(t*conj(w)) (1/sqrt(3))*imag(t*conj(w)) (1/sqrt(3))*imag(t) (1/sqrt(3))*imag(t) 0 2*real(t) -2*real(t*conj(w));
    (-1/sqrt(3))*imag(t) (-1/sqrt(3))*imag(t) (-1/sqrt(3))*imag(t*w) (-1/sqrt(3))*imag(t*w) 2*real(t) 0 -2*real(t*w);
    (1/sqrt(3))*imag(t*w) (1/sqrt(3))*imag(t*w) (-1/sqrt(3))*imag(t*conj(w)) (-1/sqrt(3))*imag(t*conj(w)) -2*real(t*conj(w)) -2*real(t*w) 0];
Eigenvector=sym('e',[49 1]);
for a1=1:7    %Since we are representing everything on P_{2,+}^{Spin} we use 2-digit base 7 numbers for rows and columns.
    for a2=1:7
        Eigenvector(a1+7*(a2-1),1)=EigenvectorArray(a1,a2);
    end
end  %The definition of the profile matrix can be found in [Jon99].
AngleOp=sym('A',[7^2 7^2]);         %The angle operator as represented on P_{2,+}^{Spin}.
    for a1=1:7
        for a2=1:7
            for b1=1:7
                for b2=1:7
            r=sym(0);      %r is a running total for each entry of the profile matrix (Defined in [Jon99]).
                    for m=1:7
            r=r+u(m,b1)*conj(u(m,b2))*conj(u(m,a1))*u(m,a2);
                    end
            AngleOp(a1+7*(a2-1),b1+7*(b2-1))=7*r*conj(r);
                end
            end
        end
    end
Test=7*AngleOp*Eigenvector-Eigenvalue.*Eigenvector;  %If Test is zero then 1/49 is an eigenvalue of 7*AngleOp. 
Substitute1=subs(Test,real(t),(t+conj(t))/2);
Substitute2=subs(Substitute1,imag(t),(t-conj(t))/(2i));
TestExpanded=expand(Substitute2);
Substitute3=subs(TestExpanded,t*real(t),(1/2)*(t^2+1));
Substitute4=subs(Substitute3,t*conj(t),1); %Here we make several substitutions utilizing |t|=1 and expand terms.
if any(Substitute4~='0')
EigenvectorTest=false;
else
EigenvectorTest=true;
end
clear a1 a2 b1 b2 m r
\end{lstlisting}
\end{appendices}

\end{document}